\theoremstyle{plain}
\newtheorem{theorem}{Theorem}[section]
\newtheorem{assumption}[theorem]{Assumption}
\newtheorem{lemma}[theorem]{Lemma}
\newtheorem{corollary}[theorem]{Corollary}
\newtheorem{proposition}[theorem]{Proposition}
\theoremstyle{remark}
\newtheorem{remark}[theorem]{Remark}
\numberwithin{equation}{section}
\def\R{{\mathbb R}}
\def\N{{\mathbb N}}
 \def\dd{\mathrm d}
\newcommand{\loc}{\rm loc}
\DeclareMathOperator{\diver}{div}
\DeclareMathOperator{\curl}{curl}
\newcommand{\dist}{\operatorname{dist}}
\newcommand{\supp}{\operatorname{supp}}
\renewcommand{\leq}{\leqslant}
\renewcommand{\geq}{\geqslant}
\def\Tend#1#2{\mathop{\longrightarrow}\limits_{#1\rightarrow#2}}
\def\cal#1{\mathcal{#1}}
\def\mb#1{\boldsymbol{#1}}
\def\eps{\varepsilon}
\newcommand{\veps}{v_{\eps}}
\newcommand{\ueps}{v_{\eps}}
\newcommand{\weps}{\omega_{\eps}}
\newcommand{\Ie}{\mathcal{I}_{\eps}}
\newcommand{\lne}{|\ln \eps|}
\date\today
\title[Vortex dynamics for the lake equations]{Dynamics of several point vortices for the lake equations}
\author[L.E. Hientzsch]{Lars Eric Hientzsch}
\author[C. Lacave]{Christophe Lacave}
\author[E. Miot]{Evelyne Miot}
\email{\newline lhientzsch@math.uni-bielefeld.de \newline christophe.lacave@univ-grenoble-alpes.fr \newline evelyne.miot@univ-grenoble-alpes.fr}
\address[L.E. Hientzsch]{Fakult\"at f\"ur Mathematik, Universit\"at Bielefeld, Postfach 10 01 31, 33501 Bielefeld, Germany
}
\address[C. Lacave $\&$ E. Miot]{Univ. Grenoble Alpes, CNRS, Institut Fourier, F-38000 Grenoble, France.}
\begin{document}
\begin{abstract} 
The global asymptotic dynamics of point vortices for the lake equations is rigorously derived. Vorticity that is initially sharply concentrated around $N$ distinct vortex centers is proven to remain concentrated for all times. Specifically, we prove weak concentration of the vorticity and in addition strong concentration in the direction of the steepest ascent of the depth function. As a consequence, we obtain the motion law of point vortices following at leading order the level lines of the depth function. The lack of strong localization in the second direction is linked to the vortex filamentation phenomena. The main result allows for any fixed number of vortices and general assumptions on the concentration property of the initial data to be considered. No further properties such as a specific profile or symmetry of the data are required. Vanishing topographies on the boundary are included in our analysis. Our method is inspired by recent results on the evolution of vortex rings in 3D axisymmetric incompressible fluids. \end{abstract}

\maketitle

\section{Introduction}

The purpose of this paper is to derive the point vortex dynamics for the lake equations, which are a two-dimensional geophysical model essentially described by {\sc H. P. Greenspan} \cite[p. 235]{Gr}.
Given a domain $\Omega\subset \R^2$ and a topography (depth function) $b:\overline{\Omega}\rightarrow \R_+$, the pair $(\Omega,b)$ is called a lake and the lake equations read
\begin{equation}\label{eq:lake}
 \begin{cases}
 \partial_t(bv)+\diver(bv\otimes v)+b\nabla p=0 \quad \text{on } \Omega\\
 \diver(bv)=0 \quad \text{on } \Omega, \quad (bv)\cdot \mb n=0 \quad \text{on } \partial\Omega,
 \end{cases}
\end{equation}
where $v:\Omega\rightarrow \R^2$ is the velocity field, $p$ the pressure and $\mb n$ the outward-pointing unit normal vector on $\partial\Omega$. 
The vector field $v$ corresponds to the vertically averaged horizontal component of a three-dimensional velocity, see \cite{BrGl} for a rigorous derivation of this system as the low Froude number limit of the usual inviscid shallow water equations. In the case where $b$ is constant, System~\eqref{eq:lake} reduces to the well-known 2D incompressible Euler equations, for which well-posedness is established in various settings in the works of {\sc W. Wolibner} \cite{Wolibner} and {\sc V. I. Yudovich} \cite{yudo}. As for the 2D Euler equations, the notion of vorticity plays a prevalent role in the qualitative analysis of the solutions of the lake equations, especially for vortex like solutions. More precisely, we consider the potential vorticity $\omega$ as
\begin{equation*}
 \omega(t,x):=\frac{\curl v(t,x)}{b(x)} = \frac{(\partial_{1}v_{2}-\partial_{2}v_{1})(t,x)}{b(x)}
\end{equation*}
which satisfies the continuity equation together with the incompressibility condition
\begin{equation*}
 \partial_t(b\omega)+\diver(b v \omega)=0,\quad \diver(bv)=0\quad \text{on } \Omega.
\end{equation*}
When the lake $(\Omega,b)$ has a finite number of islands $\Omega=\widetilde\Omega\setminus \Big(\bigcup_{k=1}^{N_{is}} \cal I_{k}\Big)$, see Assumption~\ref{assum:lake} below for the precise geometrical hypotheses, this amounts to the following vorticity formulation
\begin{equation}\label{Inviscid-vort}
\left\{
\begin{array}{l}
\partial_t(b\omega)+\diver(bv\omega) =0 \quad \text{on } \Omega \vspace*{0.2cm}\\
\curl v = b\omega, \quad \diver(bv)=0\quad \text{on } \Omega, 
\\ (bv)\cdot \mb n=0\quad \text{on } \partial\Omega,\\
\displaystyle\oint_{\partial\cal I_{k}} v\cdot \mb \tau \, \dd s=\Gamma^{k}\quad \text{for }k=1,\dots,N_{is}
\end{array}
\right.
\end{equation}
which is equivalent to \eqref{eq:lake} for smooth lakes, see \cite[Appendix A]{LNP}. The conservation of the circulation is related to Kelvin's theorem, see \cite[Proposition 2.13]{LNP}.
When the depth $b$ is bounded away from zero, existence and uniqueness of global weak solutions to \eqref{eq:lake} is proven by {\sc C. D. Levermore, M. Olivier} and {\sc E. S. Titi} in \cite{LOTiti2}.
In \cite{BM}, {\sc D. Bresch} and {\sc G. M\'{e}tivier} include varying depth functions vanishing on the boundary. 
Subsequently, {\sc C. Lacave, T. Nguyen} and {\sc B. Pausader} \cite{LNP} extend the work in \cite{BM} treating the case of singular domains and rough bottoms. They show that the inviscid lake equations are structurally stable under Hausdorff approximations of the fluid domain and $L^{p}$-perturbations of the depth. This study is extended by the present authors for an evanescent or emergent island in \cite{HLM} giving rise to an asymptotic dynamics including an additional Dirac mass in the vorticity: a point vortex located in $z^0\in \Omega$ with $b(z^0)=0$ and $\nabla b(z^0)=0$.
 Very recently, {\sc B. Al Taki} and {\sc C. Lacave} establish in \cite{AlTakiLacave} the existence and uniqueness of global classical solutions. Moreover, it is shown in \cite{AlTakiLacave} that these classical solutions are recovered in the vanishing viscosity limit for the viscous lake equations. 

In this paper, we aim to rigorously justify the $\eps$-limit of \eqref{Inviscid-vort} for $N$ vortices of core size proportional to $\eps>0$, namely for solutions to \eqref{Inviscid-vort} with initially sharply concentrated vorticity around $N$ distinct points. Our analysis applies to any fixed number $N$ of vortices without restrictions on the size or sign of their intensities, lakes $(\Omega,b)$ with vanishing topographies on the boundary are included. {\sc G. Richardson} \cite{richardson} formally computes the asymptotic motion law depending on the topography variations: to leading order, the vortex centers evolve according to the ODE
\begin{equation}\label{eq:Richardson}
 \dot{z}_{i}(t)=-|\ln\eps|\frac{\gamma_{i}}{4\pi}\frac{\nabla^{\perp}b(z_{i}(t))}{b(z_{i}(t))}, 
\end{equation}
where $\gamma_{i}=\int_{\Omega}b\omega_{i} \,\dd x$ and $\nabla^{\perp}b=(-\partial_2 b,\partial_1 b)$. So, vortices move along the level lines of the depth function $b$. For smooth lakes and depth functions bounded away from zero, the motion law \eqref{eq:Richardson} is rigorously proven by {\sc J. Dekeyser} and {\sc J. Van-Schaftingen} \cite{DekeyserVanS} for a single vortex. The method developed in \cite{DekeyserVanS} does not seem to allow for an extension of the theory to several point vortices and vanishing topography on the boundary. No strong localization property of the vorticity around its asymptotic trajectory is provided.

The strategy we pursue here is totally different from the one in \cite{DekeyserVanS}, that relies on sophisticated elliptic techniques. It is rather inspired by the analogy between point vortices for the lake equations and vortex rings for $3D$ axisymmetric inviscid fluids. Since the work by {\sc L. Da Rios} \cite{da-rios}, the dynamics of an isolated vortex filament in $3D$ inviscid fluids is conjectured to be well modelled by the binormal flow, but the rigorous derivation in the general case is an open problem, see \cite{JerrardSeis} for a result in this direction.
In the special case of (one or several) vortex rings, recent progress is due to {\sc P. Butt\`a, G. Cavallaro} and {\sc C. Marchioro} \cite{Mar3}, who rigorously prove that the leading order term in the velocity of vortex rings starting from sharply localized vortex rings with same axis of symmetry (say $\text{span}(\vec e_{z}$)), of radii $r_{i}$, with $r_{i}\neq r_{j}$ if $i\neq j$, and circulation $\gamma_i$, is $|\ln \varepsilon| \frac{\gamma_{i}}{4\pi}\frac{\vec e_{z}}{r_{i}}$. In the same spirit for \eqref{Inviscid-vort}, we consider $N$ vortices placed at mutual distance of order one and show that the velocity of the vortex centers is of order $\gamma_{i}|\ln \varepsilon|$ at leading order, which therefore is a rigorous justification of Richardson's law \eqref{eq:Richardson}. 
The leading order term in both problems stems from the 3D nature of the considered phenomena. After dimension reduction of the respective problems (by axisymmetry for the vortex rings \cite[Equation (1.6)]{Mar2} and vertically averaging of $v$ \cite{BrGl,Gr} for \eqref{eq:lake}), the leading order contribution is linked to the anelastic constraint for $v$ in \eqref{eq:lake}, see also \eqref{eq:decomposition} below. This contribution vanishes in the case of the 2D Euler equations, namely $b=\mathrm{const.}$, see \cite{MarPul93, Marchioro-Pulvirenti-book}. 

In order to highlight the main order contribution, we choose as in \cite{DekeyserVanS} to accelerate the time scale: we set $s=|\ln \varepsilon| t \in [0,T]$. Denoting by $\omega_{\varepsilon}$ the unique weak solution of \eqref{Inviscid-vort} with the initial data $\omega_{\varepsilon}^0$, we consider the rescaled vorticity
$$\tilde\omega_\varepsilon(s,x)=\omega_{\varepsilon}\left(\frac{s}{|\ln \eps|},x\right)$$ 
and $\tilde v_\eps$ the corresponding velocity field. In this rescaling, \eqref{Inviscid-vort} reads
\begin{equation}\label{eq:continuity}
\left\{
\begin{array}{l}
\partial_s(b \tilde\omega_\eps)+\frac{1}{\lne}\diver(b \tilde v_\eps\tilde \omega_\eps)=0 \quad \text{on } (0,\infty)\times \Omega \vspace*{0.2cm}\\
\curl \tilde v_{\varepsilon} = b\tilde\omega_{\varepsilon}, \quad \diver(b\tilde v_{\varepsilon})=0\quad \text{on }[0,\infty)\times \Omega, \\
 (b\tilde v_{\varepsilon})\cdot \mb n=0\quad \text{on } [0,\infty)\times\partial\Omega\\
\displaystyle\oint_{\partial\cal I_{k}} \tilde v_{\varepsilon}\cdot \mb \tau \, \dd s=\Gamma^{k}_\varepsilon \quad \text{for }k=1,\dots,N_{is} \text{ and on }[0,\infty)
,
\end{array}
\right.
\end{equation}
with the initial data $\tilde\omega_\eps(0,\cdot)=\omega_{\varepsilon}^0$.

\begin{remark}
We point out here that this time rescaling is equivalent to keeping the time invariant and to considering small vortices $ |\ln \varepsilon|\gamma_{\varepsilon} = \mathcal{O}(1)$ as in \cite{Mar3}, where $\gamma_{\varepsilon}:=\int_{\Omega}b\omega_{\varepsilon}$. This equivalence comes from the non-linearity of the lake equations \eqref{eq:lake}: if we consider $\gamma_{\varepsilon}=\tilde\gamma /|\ln \varepsilon|$, $\Gamma_{\varepsilon}^k=\tilde\Gamma^k /|\ln \varepsilon|$, with $\tilde\gamma$ and $(\Gamma^k)$ independent of $\varepsilon$, then we set $\tilde\omega_{\varepsilon} = |\ln \varepsilon| \omega_{\varepsilon}$ which verifies $\int b\tilde \omega_{\varepsilon}= \tilde\gamma$. The velocities recovered through the div-curl problem satisfies $\tilde v_{\varepsilon} = |\ln \varepsilon| v_{\varepsilon}$ and we obtain that $(\tilde\omega_{\varepsilon},\tilde v_{\varepsilon})$ is the solution of \eqref{eq:continuity}.
\end{remark}

As we work only with these rescaled variables, we omit the tilde in the following and return to $t$ for the time variable, so $(\omega_{\varepsilon},v_{\varepsilon})(t,x)$ corresponds to the solution of \eqref{eq:continuity} in the sequel.

\subsection*{Assumptions and main result}

The lakes under consideration in this article are smooth, with $N_{is}\in \N$ islands and their depth function may vanish on the boundary. We give below a precise definition.

\begin{assumption}\label{assum:lake}
The lake $(\Omega,b)$ satisfies the following conditions
\begin{enumerate}[(i)]
\item $\displaystyle \Omega:= \widetilde\Omega \setminus\Big(\bigcup_{k=1}^{N_{is}} \cal I_{k}\Big)$, where $\widetilde\Omega$, $\cal I_{k}$ are bounded simply-connected subsets of $\R^2$, $\widetilde\Omega$ is open, $\cal I_{k}$ are disjoint and compact subsets of $\widetilde \Omega$, and $\partial\Omega\in C^3$,
\item $b(x)=c(x)\varphi^{\alpha}(x)$ with $\alpha \geq 0$, $c,\varphi\in C^{3}(\overline{\Omega})$ and $c(x)\geq c_{0}>0$ on $\Omega$,
\item $\Omega=\{\varphi>0\}$ with $\nabla \varphi\neq0$ on $\partial\Omega$.
\end{enumerate}
\end{assumption}
These conditions enable us to consider the simplest case of a non-vanishing shore ($\alpha=0$) or the more realistic case of a vanishing topography on the boundary ($\alpha>0$). The function $\varphi$ plays the role of the distance to $\partial\Omega$ close to the boundary: see Lemma~\ref{lem:d-phi}.

For a vanishing topography on the boundary, these assumptions ensure that the level sets $\{x\in \overline{\Omega}\, : \ b(x)=b_{0}\}$ are unions of disjoint connected compact subsets of $\Omega$ for all $b_{0}>0$. The compactness in $\Omega$ of the level set is also true for a non-vanishing shore if $b_{0}\notin b(\partial\Omega)$. If $b_{0}\in b(\partial\Omega)$, we will consider a connected component of the respective level set being disjoint to the boundary, see the following remark.

\begin{remark}\label{rem:ODE}
Under Assumption \eqref{assum:lake} and given $z^0$ such that the connected component $\mathcal{C}$ of $\{x \in \overline{\Omega} : b(x)=b(z^0)\}$ containing $z^0$ satisfies $\mathcal{C}\cap \partial\Omega=\emptyset$, the Cauchy-Lipschitz theorem yields that \eqref{eq:Richardson} has a unique global solution in $C^1(\R^+)$ where $z(0)=z^0$. Even if $b$ may vanish at the boundary, the solution is indeed global because it holds $b(z(t))=b(z^0)$ for all $t\geq 0$. The assumption on the disjointness of $\mathcal{C}$ and $\partial\Omega$ only means that a point moving on this level set cannot reach the boundary.
\end{remark}
 
We have stated Assumption~\ref{assum:lake} with only one commun coefficient $\alpha$, but we can, as in \cite{LNP,HLM}, specify different rates $\alpha_{k}$ around $\cal I_{k}$. This extension is trivial because we only derive estimates in this paper in compact subsets of $\Omega$. However, in the case of different $\alpha_{k}$, we would need to split $\Omega$ into different neighborhoods, which makes the assumption more complicated to state. 

Next, we specify the assumptions on the initially concentrated vorticity $\weps^0$ around $N$ point vortices which belong to different and mutually disjoint connected components of level sets. 

\begin{assumption}\label{ass: initial data}
Let $N_{v}\in \N^*$ and $(z_{1}^0,\ldots,z_{N_{v}}^0)\in \Omega^{N_{v}}$, we define $\cal C_{i}$ to be the connected component of the level set $\{x\in \overline{\Omega} : b(x)=b(z_{i}^0)\}$ containing $z_{i}^0$.

The points $(z_{i}^0)$ and the sequence of initial vorticities $\omega_\eps^0$ satisfy:
\begin{enumerate}[(i)]
\item $ \displaystyle \cal C_{i}\cap \cal C_{j}= \cal C_{i}\cap \partial\Omega=\emptyset, \quad \text{ for all }i\neq j$.
\item There exist $M_{0},\varepsilon_{0}>0$ and $(\delta_{1},\ldots,\delta_{N_{v}})\in \{-1,1\}^{N_{v}}$ such that 
\begin{equation*}
 \weps^0(x):=\sum_{i=1}^{N_v}\omega_{\eps,i}^0(x),
\end{equation*}
where $\omega_{\eps,i}^0\in L^\infty(\Omega)$ satisfies the following properties for all $\varepsilon\in (0,\varepsilon_{0}]$ and $i=1, \dots, N_v$:
\begin{align*}
 \bullet\quad &\supp(\omega_{\eps,i}^0)\subset B(z_i^0,M_0\eps); \\
 \bullet\quad & 0\leq \delta_{i} \omega_{\eps,i}^0 \leq \frac{M_0}{\eps^2} ;\\
 \bullet\quad & \gamma_{\eps,i} := \int_{\Omega}b\omega_{\eps,i}^0 \,\dd x \to \gamma_{i} \in \R^* \text{ as }\varepsilon\to 0.
\end{align*}
\end{enumerate}
\end{assumption}

\begin{remark}\label{rem:r0}
As the $\mathcal{C}_i$ containing the vortex center $z_i^0$ are mutually disjoint connected compact sets, we state that there is $\rho_{1,2}>0$ such that for all $\rho \in (0,\rho_{1,2}]$ the compacts subsets $\cal C_{\rho,1}$ and $C_{\rho,2}$ are disjoint, where
 \[
 \cal C_{\rho,j}:=\text{the connected component containing $z_{j}^0$ of } \{x\in \overline{\Omega} : \: |b(x)-b(z_{j}^0)|\leq \rho \}.
 \]
 Indeed, by contradiction, we would have a decreasing sequence $\rho_{n}\to 0$ and $x_{n}\in \cal C_{\rho_{n},1}\cap C_{\rho_{n},2}$ which would imply by extracting a subsequence redenoted by $x_n$ that $x_{n}\to x_{\infty}$ such that $b(x_{\infty})=b(z_{1}^0)= b(z_{2}^0)$. This means that $\cal C_{\rho_{n},1}=\cal C_{\rho_n,2}$ is a decreasing sequence of connected compact sets, hence verifying that the limit is connected, contained in the level set $\{x\in \overline{\Omega} : \: b(x)=b(z_{1}^0) \}$ and containing $z_{1}^0$ and $z_{2}^0$, which is in contradiction with $\cal C_{1} \cap \cal C_{2}=\emptyset $. In the same way, observing that $\partial\widetilde\Omega$ and $\partial \cal I_{k}$ are connected compact sets, we also infer that $\cal C_{\rho,1}$ and $\partial\Omega$ are disjoint for $\rho\leq \rho_{01}$. Defining $\rho_{b}:=\min_{0\leq i<j\leq N_v} \rho_{i,j}$, we get that 
 $(\cal C_{\rho,i})_{i=1,\dots,N_v}$ and $\partial\Omega$ are mutually disjoint connected compact sets for all $\rho\leq \rho_{b}$.
Decreasing $\varepsilon_{0}$ if necessary, this allows us to introduce $r_{0}>0$ and $C>0$ such that for all $i,j\in \{1,\dots ,N_v\}$, $i\neq j$ 
 \begin{equation*}
 \dist(\mathcal{C}_{\rho_{b},i},\partial\Omega)>r_0, \quad \dist(\mathcal{C}_{\rho_{b},i}, \mathcal{C}_{\rho_{b},j})> r_0, \quad \supp(\omega_{\eps,i}^0)\subset \mathcal{C}_{C\eps,i}\subset
\mathcal{C}_{\rho_{b}/2,i}
\end{equation*}
for all $\varepsilon\in (0,\varepsilon_{0}]$. Here and in all the sequel, the distance of two compact subsets of $\R^2$ corresponds to their minimal distance (and not to the Hausdorff distance):
\[
\dist(K_{1},K_{2}):=\min_{x\in K_{1}}\dist(x,K_{2})=\min\{ |x-y|\, :\, (x,y)\in K_{1}\times K_{2}\}.
\]
This ensures in particular that the supports of $\omega_{\eps,i}^0$ are mutually disjoint compact subsets of $\Omega$.
\end{remark}

In our main result, we prove that, for concentrated initial data, the vorticity evolving through \eqref{eq:continuity} remains weakly concentrated in moving disks and strongly concentrated in the direction of the steepest ascent of the topography $b$.

\begin{theorem}\label{thm:main}
Let $(\Omega,b)$ be a lake satisfying Assumption~\ref{assum:lake}, let $\left(\omega_\eps^0, (z_k^0)_{k=1}^{N_v}\right)$ be a sequence of initial data satisfying Assumption~\ref{ass: initial data} and let $(\Gamma_{\varepsilon}^k)_{k=1}^{N_{is}}$ be a sequence in $\R^{N_{is}}$ converging to $(\Gamma^k)_{k=1}^{N_{is}}$ as $\varepsilon\to 0$.

 For all $i=1,\ldots,{N_v}$, let $z_i$ be the unique global solution of 
\begin{equation}\label{eq: zi}
 \dot{z_i}(t)=-\frac{\gamma_{i}}{4\pi} \frac{\nabla^{\perp}b(z_i(t))}{b(z_i(t))}, \quad z_i(0)=z_i^0.
\end{equation}

Then the following hold for the global weak solution\footnote{Given by the well-posedness results by {\sc D. Bresch} and {\sc G. M\'{e}tivier} in \cite{BM} as well as by the second author, {\sc T. Nguyen} and {\sc B. Pausader} in \cite{LNP}, see Proposition~\ref{prop:well-posed} below.} $\omega_{\varepsilon}$ of \eqref{eq:continuity} with initial data $\omega_{\varepsilon}^0$:

(i) There exist
trajectories $z_{\eps, i}\in C(\R_+; \Omega)$ such that 
\begin{equation*}
 \sup_{t\in [0,T]} | z_{\varepsilon,i}(t) - z_{i}(t) | \to 0\quad \text{as }\varepsilon\to 0 ,\quad \text{for all }T\geq 0\text{ and }1\leq i\leq {N_v}.
\end{equation*}
There exists a decomposition
\[
 \weps(t,x)=\sum_{i=1}^{N_v}\omega_{\eps,i}(t,x), \quad \omega_{\eps,i}\in L^\infty(\R_+,L^\infty(\Omega))\cap C(\R_+,L^p(\Omega))\quad \text{for }p\in [1,\infty)
\]
verifying $\omega_{\eps,i}(0,\cdot) =\omega_{\eps,i}^0$ and such that \begin{equation*}
 \sup_{t\in [0,T]}\Bigg| \gamma_{i} - \int_{B(z_{\varepsilon,i}(t) ,R_{\varepsilon})} b\omega_{\varepsilon,i}(t,x)\, \dd x \Bigg| \to 0 \text{ as }\varepsilon\to 0 , \text{ for all }T\geq 0\text{ and }1\leq i\leq {N_v} 
\end{equation*}
where $R_\eps \to 0$ as $\eps \to 0$.

(ii) For every $k\in (0,1/4)$ and $T>0$ there exists $\eps_{k,T}, C_{k,T}>0$ depending only on $(z_i^0)_{i=1,\dots, {N_v}}$, $(\gamma_i)_{i=1,\dots, {N_v}}$, $(\Gamma^i)_{i=1,\dots, {N_{is}}}$, $M_{0}$, $b$, $\Omega$, $k$ and $T$ such that, for any $\varepsilon\in (0,\varepsilon_{k,T}]$, 
$\supp \omega_{\eps,i}(t,\cdot) \subset \cal C_{\rho_{b},i}.$
Moreover,
\begin{equation}\label{Eq.strong}
 \supp \omega_{\varepsilon,i}(t,\cdot) \subset \left\{ x \, : \, |b(x)-b(z_i^0)| \leq \frac{C_{k,T}}{|\ln \varepsilon|^k}\right\}
 \end{equation}
for all $t\in[0,T]$ and $i\in \{1,\dots,{N_v}\}$.
\end{theorem}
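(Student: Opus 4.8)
The plan is to transpose the recently developed techniques for $3$D axisymmetric vortex rings (Butt\`a--Cavallaro--Marchioro) to the lake setting, tracking concentration in a carefully chosen decomposition of the velocity field. First I would establish the key decomposition of the Biot--Savart kernel for the div-curl problem $\curl v = b\omega$, $\diver(bv)=0$. Near each vortex center $z_{\eps,i}$, the velocity should split as $v_\eps = v_{\eps,\text{sing}} + v_{\eps,\text{reg}}$, where the singular part behaves like the $2$D Euler kernel $\frac{1}{2\pi}\frac{(x-z_{\eps,i})^\perp}{|x-z_{\eps,i}|^2}$ produced by the $i$-th vortex itself, and the regular part collects the contribution of the anelastic constraint (the $\nabla b$ term) together with the other vortices, the boundary, and the circulations $\Gamma^k_\eps$. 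The crucial observation, analogous to \eqref{eq:decomposition}, is that the regular part carries a factor $\nabla b/b$ evaluated near $z_{\eps,i}$, which upon rescaling by $|\ln\eps|$ yields precisely the leading-order drift $-\frac{\gamma_i}{4\pi}\frac{\nabla^\perp b}{b}$ of \eqref{eq: zi}. I would define $z_{\eps,i}(t)$ as the center of vorticity (weighted by $b\omega_{\eps,i}$) and derive its equation of motion.

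Next I would prove part (i), weak concentration, by a Gronwall-type argument on the center of vorticity. The time derivative of $z_{\eps,i}$ equals the $b\omega_{\eps,i}$-weighted average of $v_\eps$; splitting $v_\eps$ as above, the self-interaction singular part contributes nothing to the motion of its own center by antisymmetry, while the regular part, when integrated against a mass concentrated in a small disk, produces the drift $-\frac{\gamma_i}{4\pi}\frac{\nabla^\perp b(z_{\eps,i})}{b(z_{\eps,i})}$ up to errors controlled by the concentration radius $R_\eps$. Closing the estimate requires simultaneously controlling the moment of inertia $\int b\omega_{\eps,i}|x-z_{\eps,i}|^2\,\dd x$ to guarantee that the mass stays in a shrinking disk $B(z_{\eps,i}(t),R_\eps)$; I expect the standard argument bounding the growth of this second moment via the antisymmetry of the kernel and the $L^\infty$ bound $0\leq\delta_i\omega_{\eps,i}\leq M_0/\eps^2$ together with the conservation $\int b\omega_{\eps,i}=\gamma_{\eps,i}$. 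Comparing $z_{\eps,i}$ to the solution $z_i$ of \eqref{eq: zi} is then a Gronwall estimate using the Lipschitz regularity of $\nabla^\perp b/b$ on the compact $\cal C_{\rho_b,i}$, which stays away from $\partial\Omega$ by Remark~\ref{rem:r0}.

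For part (ii) I would first confine the support to $\cal C_{\rho_b,i}$ by a transport argument: the vorticity is transported by the flow of $v_\eps/|\ln\eps|$, and the normal velocity component across level lines of $b$ must be shown to be small, so a particle cannot cross from one connected component of the level set to another in time $T$. The sharper statement \eqref{Eq.strong}, strong localization in the direction of steepest ascent, is the heart of the theorem and what I expect to be the main obstacle. The idea is that the drift \eqref{eq: zi} is tangent to the level lines of $b$, so $b$ is conserved along the limiting trajectory; hence one should bound the fluctuation of $b$ over $\supp\omega_{\eps,i}$. I would control $\frac{\dd}{\dd t}\int b\omega_{\eps,i}\,|b(x)-b(z_i^0)|^2\,\dd x$ (or an analogous functional measuring the spread of $b$-values), showing that the transport term, which involves $\nabla b\cdot v_\eps$, has a controlled sign or smallness because the leading-order velocity is $\nabla^\perp b$-directed and hence orthogonal to $\nabla b$. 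The delicate point is that only the gradient direction of $b$ is strongly localized, not the tangential direction (reflecting the filamentation phenomenon), so the estimate must be anisotropic; obtaining the explicit rate $|\ln\eps|^{-k}$ for every $k\in(0,1/4)$ will require a bootstrap that balances the $L^\infty$ vorticity bound against the time integral of the small normal velocity, and carefully tracking how the errors in the velocity decomposition accumulate over $[0,T]$.
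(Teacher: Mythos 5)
Your overall architecture matches the paper's: a three-part kernel decomposition isolating the 2D-Euler-like singular term from the anelastic ($\nabla b$) term, a center-of-vorticity/moment-of-inertia Gronwall scheme for weak concentration, a transverse second moment exploiting that the leading drift is orthogonal to $\nabla b$, and a Marchioro-type iteration for the rate $\lne^{-k}$, all wrapped in a reduction where the other vortices act as an exterior field and a continuity argument keeps the blobs separated. But there is a genuine gap at the point your scheme must close: the conservation of energy appears nowhere in your proposal, and without it neither the identification of the drift coefficient nor the moment-of-inertia bound can be obtained. The term driving the motion is $v_{\varepsilon,L}=\frac{\nabla^\perp b}{4\pi b^2}\psi_\eps$ with $\psi_\eps(x)=\int\ln|x-y|\,\sqrt{b(x)b(y)}\,(b\weps)(y)\,\dd y$, see \eqref{eq:uL}; it is not a ``regular'' field in any uniform sense, since its size on the vortex core is the self-interaction logarithm, i.e. proportional to $\ln(\text{core size})$, and its Lipschitz constant diverges as well. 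Consequently, ``integrating the regular part against a mass concentrated in $B(z_{\eps,i},R_\eps)$ up to errors controlled by $R_\eps$'' cannot produce the coefficient $-\gamma_i/4\pi$: weak concentration at scale $R_\eps=\big(\ln\lne/\lne\big)^{1/2}$ only gives $|\ln R_\eps|\sim\frac12\ln\lne\ll\lne$, so concentration alone pins down the speed only up to the vanishing factor $|\ln R_\eps|/\lne$. (Confinement at scale $C\eps$ for positive times, which would suffice, is precisely what filamentation prevents anyone from proving.) The paper resolves this by showing $E_\eps(t)=\frac1{2\pi}\gamma_\eps^2b(z^0)\lne+\mathcal{O}(1)$ for all $t$ (Corollary~\ref{coro:energy0} and Proposition~\ref{prop:energy}), whence $\int\psi_\eps(b\weps)\,\dd x=-\gamma_\eps^2 b(z^0)\lne+\mathcal{O}(1)$ at every time, even though the sharp $\eps$-concentration is known only at $t=0$; this energy step is also an essential ingredient of the Butt\`a--Cavallaro--Marchioro method you propose to transpose.

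The same omission breaks your weak-concentration estimate. Antisymmetry kills only the $v_{\eps,K}$ contribution to $\frac{\dd}{\dd t}\Ie$; the term $v_{\eps,L}$ is not antisymmetric and is of size $\lne$, so the $L^\infty$ bound and mass conservation alone give only $\frac{\dd}{\dd t}\Ie\lesssim\sqrt{\Ie}$ after the $\lne^{-1}$ rescaling, i.e. $\Ie(t)\lesssim\Ie(0)+t^2$, far from the required $\Ie\leq C/\lne$. What closes the Gronwall loop in the paper is Lemma~\ref{lemma:L2}: the variance of $\psi_\eps$ over the core, measured against $b\weps$, is $\mathcal{O}(\lne)$ rather than the trivial $\mathcal{O}(\lne^2)$; its proof combines the rearrangement upper bound on $-\psi_\eps$ (Lemma~\ref{lem:est-psi}) with energy conservation, which forces that upper bound to be nearly saturated and hence the fluctuations to be small. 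By contrast, your part (ii) outline is essentially sound: the transverse moment $\mathcal{K}_\eps\leq C/\lne$ does follow from the (antisymmetrization-based) near-conservation of $\int b^k(b\weps)$, where $v_{\eps,L}$ drops out by orthogonality to $\nabla b$, and the iteration then yields the rate $\lne^{-k}$ for $k<1/4$. But since the strong localization bootstrap and the trajectory identification both feed on the part (i) estimates, the missing energy argument is not a local defect; it undermines the global scheme.
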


We stress that the result allows one to consider any fixed number ${N_v}$ of vortices that are initially well-separated in the sense of Assumption~\ref{ass: initial data} and it also applies to lakes $(\Omega,b)$ with beaches, namely vanishing topographies at the boundary. Note that the asymptotic dynamics \eqref{eq: zi} is a consequence of the obtained localization properties which are pivotal to our method. The first statement corresponds to a \emph{weak concentration} property as obtained in \cite{DekeyserVanS} for a single vortex. While the main part of each blob of vorticity is included in a small disk close to the asymptotic vortex trajectory, some small amount of vorticity can be located outside this disk. The second item concerns the localization property for the blobs. A direct byproduct of the inclusion $\supp \omega_{\eps,i}(t,\cdot) \subset \cal C_{\rho_{b},i}$ is the fact that the blobs remain separated on $[0,T]$. It also states the \emph{strong localization} of $\omega_{\eps,i}$ around the connected component $\mathcal{C}_i$ of the level set containing $z_{i}^0$. More precisely, a small amount of vorticity can be located outside a small disk around the vortex center, but has to be close to $\cal C_{i}$ (see Remark~\ref{rem:dim}). This stronger notion of localization compared to the weak concentration is crucial for treating several vortices and vanishing topographies. Note that the authors in \cite{DekeyserVanS} lack such a strong localization property and hence only consider a single vortex. 

\begin{remark}\label{rem:dim}
 We emphasize that the strong localization \eqref{Eq.strong} in the direction of steepest ascent of $b$ gives the localization around $\cal C_{i}$ for the usual topology. Indeed, if we draw any closed curve $\Lambda$ which does not intersect $\cal C_{i}$, but which can be close to $\cal C_{i}$, then we can apply Remark~\ref{rem:r0} to state that there is $\rho_{\Lambda}>0$ such that $C_{\rho_{\Lambda},i}$ is disjoint to $\Lambda$, because $\Lambda$ is a connected compact set disjoint to $\cal C_{i}$. This implies that for all $\varepsilon$ such that $\frac{C_{k,T}}{|\ln \varepsilon|^k}<\rho_{\Lambda}$, the vorticity cannot meet the curve $\Lambda$ on $[0,T]$. For more details on this argument, see the end of the proof of Theorem~\ref{thm:main} in Section~\ref{sec:ProofMain}.
\end{remark}

The axisymmetric 3D Euler equations without swirl can be interpreted as a special case of the lake equations for which $\Omega=\R\times \R_{+}$ and $b(z,r)=r$, see for instance the introduction in \cite{Mar2}. As mentioned in the introduction, vortex rings in the $3D$ problem amount to point vortices for \eqref{eq:lake} and we recall that the limit motion in \cite{Mar2,Mar3} is 
\[
\dot{z}_{i}(t)=\frac{\gamma_{i}}{4\pi} \frac{\vec{e}_{z}}{r_{i}^0} = -\frac{\gamma_{i}}{4\pi} \frac{(0,1)^\perp}{r_i(t)}.
\]
 In this regard, Theorem~\ref{thm:main} can be seen as a generalization to general $\Omega$ and $b$ of the results developed in \cite{Mar3} and references therein. However, such a generalization is not trivial because \cite{Mar3} and previous results rely on the 3D explicit Biot-Savart law, which is not available for general $b$. One of the main difficulties is to provide a proof of Theorem~\ref{thm:main} without this kind of representation formulas. 

The assumption that the initial centers of vorticity $z_{i}^0$ belong to different connected components of level sets can be understood as a sufficient condition to rule out collisions between vortices or with the boundary $\partial\Omega$. This condition is reminiscent of requiring the vortex rings to be of different radii in \cite{Mar3}. The question whether a local result can be achieved if two vortices belong to the same $\cal C_{i}$ then arises naturally. Given that such a local result was proven for 3D vortex rings in \cite{Mar2}, we expect that a respective local result for the lake equations \eqref{eq:lake} is achievable with our approach, even with a simplified proof compared to Theorem~\ref{thm:main} as difficulties related to the global validity are then not relevant. More precisely, the local result would yield $T_{0}>0$ such that the limit dynamics is proven on $[0,T_{0}]$. As in \cite{Mar2} we expect $T_0$ to be strictly smaller than the time of first collision between the limit point vortices $T_c$ which can be $T_c=+\infty$. This restriction for the aforementioned models arising from $3D$ models after dimension reduction is a major difference to the 2D Euler case for which the result is known up to collision time \cite{Marchioro-Pulvirenti-book}. 
As the strong localization only holds in one direction, a collision of a vortex with a small filament emitted by the preceding vortex core can occur at time $T_{\varepsilon}<T_{c}$.
The spreading of the support is related to the phenomenon of filamentation of vortex patches which is well observed experimentally and numerically, see e.g. \cite{Leweke1,Leweke2}. Vortex filamentation is not yet fully understood and expected to be linked to various instability mechanisms, see e.g. \cite{Widnall}.

For the 2D Euler eq., {\sc C. Marchioro} and {\sc M. Pulvirenti} derive a lower bound for the time of the first appearance of vortex filamentation \cite{Marchioro-Pulvirenti-book}. Specifically, they show that there exists $\varepsilon_{T}$ depending on $T$ such that vorticity initially compactly supported in a disk of radius $\varepsilon$ remains supported in a disk of radius $\varepsilon^\beta$ on $[0,T]$, provided that $\varepsilon\in (0,\varepsilon_{T}]$. It remains a fascinating open question to fully understand the spreading of the support. An important step in that direction consists in determining the optimal $\beta>0$. While $\beta<1/300$ is assumed in \cite{Marchioro-Pulvirenti-book}, this bound is improved to $\beta<1/3$ in \cite{Mar98} by {\sc C. Marchioro}. In \cite{ButtaMar18}, {\sc P. Butt\`a} and {\sc C. Marchioro} investigate the time for which a small filament emanates from the disk of radius $\varepsilon^\beta$ with $\beta<1/2$. Understanding the respective mechanism for the dimension reduced 3D models appears to be even more involved from a mathematical point of view. Indeed, decomposing the velocity field as in \eqref{eq:decomposition} below, suggests that the first order term corresponds to the analogous one as in the 2D case which does not contribute to the displacement of the center of vorticity. The second order term in the expansion determines the motion at scale of velocity $\mathcal{O}(\lne)$, see \eqref{eq:uL} below. Being related to the anelastic constraint, its counterpart for 2D Euler vanishes. The special structure of this term pointing in the direction of $\vec{e}_{z}$ for vortex rings and $\nabla^\perp b$ for \eqref{eq:lake}, yields the motion of the vortex core with velocity of order $\lne$ in that direction, which corresponds to the binormal flow in this special case. Note that the velocity in the exterior of the vortex core is of $\mathcal{O}(1)$. For the lake equation, this significant difference of velocity heuristically suggests the mechanism leading to the spreading of the support: a small filament emanating from the disk of radius $\varepsilon$ is transported by a velocity which is much smaller than the one in the core of the vortex.
The initial vortex will spread in the direction $\nabla^{\perp} b$, we are only able to prove strong localization in the direction of steepest ascent of $b$, namely close to the level set. 
In order to avoid this spread, one would need to prove that the vorticities remain confined in a disk of radius $R\varepsilon$, which is far from being known even in dimension two.
The link of the localization problem to the vortex filamentation phenomena and the stability of vortex filaments further motivates its study. The vortex filamentation effect is of high physical relevance and it has been extensively investigated in the physical literature starting from the pioneering work \cite{Thomson} by {\sc J.J. Thomson}, see e.g. \cite{Widnall}. On a related note, the understanding of the vortex dynamics for \eqref{eq:lake} may be of practical use to understand phenomena such as rip currents, transport mechanisms of pollutants and sedimentation, see \cite{Peregrine} and also the introduction and conclusions in \cite{richardson} as well as references therein.

Similarly to the aforementioned results, Theorem~\ref{thm:main} states the stability of the motion of point vortices requiring only a very general localization assumption, see Assumption~\ref{ass: initial data}. The result is therefore sufficiently robust and general to allow for an experimental and numerical observation of the phenomenon. In particular, we do not assume any special profile or symmetry property for the initial data. 

Concerning possible generalizations of Theorem~\ref{thm:main} we mention that, thanks to the strong localization property around the connected components $\cal C_i$ of the respective level set of $b$, we require the regularity properties of the lake $(\Omega,b)$ only locally close to $\mathcal{C}_i$. Hence, we expect an extension for lagrangian solutions, see Proposition~\ref{prop:transport}, to rough lakes, as considered in \cite{LNP, HLM}, to be within reach by the present techniques.

Without commenting further on the dynamics of the limit trajectories, we only mention that no assumptions on the zero-set of $\nabla b$ are required. Note that under suitable regularity assumptions level sets can be characterized by means of Sard's Theorem \cite{Sard} and its generalizations.
We do not rely on any such property of the level sets. If $z_{i}^0$ is located on a critical point $\nabla b(z_{i}^0)=0$, then Theorem~\ref{thm:main} states that $\omega_{\varepsilon,i}$ remains concentrated around a stationary point. It is then an interesting open problem to investigate the motion in this case without rescaling the time, i.e. exhibiting the second term in the motion expansion. In other words, Theorem~\ref{thm:main} characterizes the dynamics on a short time scale of order $\lne^{-1}$. The mutual interaction of vortices is only relevant on time scales of order one. Alternatively, to observe the interactions at the main order, we could initially place the vortices at a distance of order $1/\lne$. In this case, our strong localization result does not suffice to separate them because we would need $k\geq1$ in \eqref{Eq.strong}.

The paper is organized as follows. Section~\ref{sec:pre} concerns the well-posedness of the lake equations \eqref{eq:lake}. In Section~\ref{sec:redsystem}, it is shown how the problem addressed in Theorem~\ref{thm:main} can be reduced to the study of a single vortex evolving according to the lake equations with additional external field. The proof of the main Theorem~\ref{thm:main} is completed assuming that the reduced problem is solved, namely that Theorem~\ref{thm: main red system} holds. The remaining part of the paper is dedicated to the proof of Theorem~\ref{thm: main red system}, see the last paragraph of Section~\ref{sec:redsystem} for a detailed outline of Sections~\ref{sec:BS} to \ref{sec:trajectory}. The Appendix~\ref{sec:transport-appendix} is devoted to the proof of several regularity properties of solutions to \eqref{Inviscid-vort} stated in Proposition~\ref{prop:transport}. Finally, Appendix~\ref{app:rearrangement} provides a result on the rearrangement of the mass used several times throughout the paper.

\bigskip
\noindent
{\bf Acknowledgements.} The authors warmly thank Romain Joly for interesting discussions on general geometrical assumptions related to connected components, see Remark~\ref{rem:r0}, and Emmanuel Russ for discussions concerning elliptic estimates with islands. This work is supported by the French National Research Agency in the framework of the project ``SINGFLOWS'' (ANR-18-CE40-0027-01). L.E.H. was funded by the Deutsche Forschungsgemeinschaft (DFG, German Research Foundation) - SFB 1283/2 2021 - 317210226.
E.M. acknowledges the project ``INFAMIE'' (ANR-15-CE40-01).

\section{Preliminary results: Well-posedness and Biot-Savart type law}\label{sec:pre}

\subsection{Well-posedness of weak solutions for smooth lakes and additional properties}\label{sec:WP}

We begin this section by recalling the well-posedness result for weak solutions proved by {\sc D. Bresch} and {\sc G. M\'{e}tivier} in \cite{BM}, see also \cite{LNP}.

\begin{proposition}[\cite{BM}]\label{prop:well-posed}Let $(\Omega,b)$ a smooth lake, i.e. verifying Assumption~\ref{assum:lake}, $(\Gamma^k)\in \R^{N_{is}}$ and $\omega^0\in L^\infty(\Omega)$. For all $T>0$ there exists a unique pair $(v,\omega)$ with $\omega\in L^\infty([0,T]\times \Omega)$ which is a solution of \eqref{Inviscid-vort} in the following sense:
\begin{enumerate}[(i)]
 \item $\sqrt b v \in L^\infty([0,T];L^2(\Omega))$ satisfies in the weak sense for a.e. $t\in [0,T]$
\begin{gather*}
 \diver(bv)=0 \text{ and } \curl v=b\omega \text{ in }\Omega,\ (bv)\cdot \mb n =0 \text{ on }\partial\Omega,\\
 \oint_{\partial\cal I_{k}} v\cdot \mb \tau \, \dd s=\Gamma^{k}\quad \text{for }k=1,\dots,N_{is};
\end{gather*}
 \item for all $\Phi\in C^1([0,T]\times \overline{\Omega})$, there holds for any $t\in [0,T]$
 \begin{align*}
 \int_{\Omega} \Phi(t,x) (b\omega)(t,x) \,\dd x &- \int_{\Omega} \Phi(0,x) (b\omega^0)(x) \,\dd x \\
 &= \int_{0}^t \int_{\Omega} ( b\omega)(s,x) \left(\partial_{t}\Phi + v\cdot \nabla \Phi\right)(s,x) \,\dd x\dd s.
\end{align*}
\end{enumerate}
Moreover, the solution satisfies the following additional regularity properties: $v\in L^\infty([0,T];C(\overline{\Omega}))$, $v\cdot \mb n=0$ on $\partial\Omega$ and
\[
\omega\in C([0,T];L^r(\Omega)), \quad v\in C([0,T];W^{1,r}(\Omega))
\]
for any $r\in [1,\infty)$.
\end{proposition}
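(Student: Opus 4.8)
The plan is to decouple the system \eqref{Inviscid-vort} into an elliptic \emph{div-curl} (Biot--Savart type) problem that reconstructs $v$ from the potential vorticity $\omega$, and a linear transport equation for $\omega$ driven by the reconstructed velocity. The full statement then follows by combining sharp estimates for the two pieces with a Yudovich-type fixed-point and uniqueness argument, the existence and uniqueness being already contained in \cite{BM} (see also \cite{LNP}); what genuinely has to be checked here is how the stated regularity is produced and on which elliptic estimates it rests.

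First I would fix $t$ and treat the stationary problem: given $\omega\in L^\infty(\Omega)$ and circulations $(\Gamma^k)$, find $v$ with $\diver(bv)=0$, $\curl v=b\omega$, $(bv)\cdot \mb n=0$ and the prescribed circulations around the islands $\cal I_k$. Since $\diver(bv)=0$ on the perforated domain, one writes $bv=\nabla^\perp\psi$ for a stream function $\psi$ which, by $(bv)\cdot\mb n=0$, is constant on each connected component of $\partial\Omega$; the curl equation then becomes the degenerate elliptic equation $\diver(b^{-1}\nabla\psi)=b\omega$, while the circulation integrals $\oint_{\partial\cal I_k} v\cdot\mb\tau\,\dd s=\Gamma^k$ are encoded as flux conditions for $b^{-1}\nabla\psi$ across $\partial\cal I_k$. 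This is precisely the problem analysed in \cite{BM,LNP}, where one obtains not only solvability and the energy bound $\sqrt b\, v\in L^2(\Omega)$, but, crucially for the present statement, the uniform modulus-of-continuity estimate giving $v\in C(\overline\Omega)$ (in fact a log-Lipschitz bound) together with $W^{1,r}(\Omega)$ regularity for every finite $r$, uniformly for $\omega$ bounded in $L^\infty$. I would record the resulting linear map $\omega\mapsto v=:K[\omega]$ and its continuity properties as the Biot--Savart operator underlying \eqref{Inviscid-vort}.

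With $K$ in hand, I would solve the evolution by the classical Yudovich scheme adapted to the weight $b$. Because $\partial_t b=0$ and $\diver(bK[\omega])=0$, the equation $\partial_t(b\omega)+\diver(b\,K[\omega]\,\omega)=0$ reduces to pure transport $\partial_t\omega+K[\omega]\cdot\nabla\omega=0$ along the flow generated by $K[\omega]$, which preserves the measure $b\,\dd x$; in particular $\|\omega(t)\|_{L^\infty}=\|\omega^0\|_{L^\infty}$ for all $t$. Since $K[\omega]$ is log-Lipschitz uniformly in time its flow map is well defined, so a contraction argument on a short interval, propagated to arbitrary $T$ by the conserved $L^\infty$ bound, yields a solution; uniqueness follows from the standard Yudovich estimate, controlling the spread between the two flow maps by the log-Lipschitz modulus of $v$ and invoking an Osgood inequality. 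Part (ii) of the statement is then the integrated form of the transport equation tested against $\Phi\in C^1([0,T]\times\overline\Omega)$.

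It remains to upgrade the time regularity. Transport along a continuous-in-time log-Lipschitz measure-preserving flow gives $\omega\in C([0,T];L^r(\Omega))$ for every finite $r$ by the usual density and renormalization argument; inserting this into the continuity of $K\colon L^r\to W^{1,r}$ yields $v\in C([0,T];W^{1,r}(\Omega))$, while the uniform continuity estimate gives $v\in L^\infty([0,T];C(\overline\Omega))$ with $v\cdot\mb n=0$ on $\partial\Omega$. The main obstacle throughout is the degeneracy of the coefficient $b^{-1}$ where $b$ vanishes on the beach $\{\varphi=0\}$ (the case $\alpha>0$ in Assumption~\ref{assum:lake}): establishing the uniform continuity and $W^{1,r}$ bounds for $v$ up to such a boundary, and in particular the log-Lipschitz regularity on which both the flow and the Yudovich uniqueness argument rest, is the delicate point, and it is exactly here that one must rely on the fine degenerate-elliptic estimates of \cite{BM,LNP} rather than on standard non-degenerate theory.
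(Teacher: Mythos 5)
Your decoupling of \eqref{Inviscid-vort} into the degenerate div--curl problem $\diver(b^{-1}\nabla\psi)=b\omega$ (with flux conditions encoding the circulations) plus a linear transport equation, run through a Yudovich-type scheme, is indeed the architecture of the proof in \cite{BM}; note that the paper itself does not prove this proposition at all but quotes it from \cite{BM} (see also \cite{LNP}), so the comparison must be with what those references actually supply. And here is the genuine gap: you rest the whole evolutionary part --- well-definedness of the flow of $K[\omega]$, the contraction argument, and the flow-map/Osgood uniqueness proof --- on a \emph{log-Lipschitz} estimate for $v$ up to $\overline\Omega$, which you claim is ``precisely'' obtained in \cite{BM,LNP}. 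For a vanishing topography ($\alpha>0$ in Assumption~\ref{assum:lake}) this is false: \cite{BM,LNP} provide $\sqrt b\,v\in L^2$, $v\in W^{1,r}(\Omega)$ for every finite $r$ and $v\in C(\overline\Omega)$, and the uniqueness argument of \cite{BM} is Eulerian, \`a la Yudovich --- an energy estimate for the difference of two solutions exploiting the controlled growth in $p$ of the degenerate-elliptic $W^{1,p}$ constants --- with no recourse to trajectories or to a pointwise log-Lipschitz modulus. The log-Lipschitz bound on $\overline\Omega$ in the degenerate case is exactly the difficult recent result of \cite{AlTakiLacave}, as the paper stresses in the footnoted remark immediately after the proposition. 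So, as written, your proof invokes a key ingredient that is absent from the sources it leans on, and replacing it by what \cite{BM,LNP} do give (H\"older continuity from $W^{1,r}$, $r>2$) is not enough to run your Lagrangian scheme.

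Even granting the log-Lipschitz estimate, the Lagrangian route is not as automatic as your sketch suggests, because near the beach $\diver v=-\nabla b\cdot v/b$ is unbounded: the fact that trajectories cannot reach $\{b=0\}$ in finite time, that the flow preserves the measure $b\,\dd x$, and that the weak solution is actually transported, $\omega(t,\cdot)=\omega^0\circ X(0,t,\cdot)$, all require the Desjardins/DiPerna--Lions renormalization analysis \cite{Desjardins,Dip}. In the present paper this is precisely the content of Proposition~\ref{prop:transport}, proved separately in Appendix~\ref{sec:transport-appendix}, and it is deliberately presented as an \emph{additional} property established on top of the cited well-posedness result, not as an ingredient of its proof. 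Your proposal treats these steps as immediate consequences of the modulus of continuity, which they are not; this, together with the misattributed elliptic estimate, is where the argument fails to close.
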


Considering $\Phi\equiv 1$ it is then obvious that the vorticity mass is conserved:
\[
\int_{\Omega}b(x)\omega(t,x)\, \dd x=\int_{\Omega}(b\omega^0)(x)\, \dd x\quad \text{for all }t\in [0,T].
\]
 It is moreover proved in \cite[Theorem 2.1]{AlTakiLacave} that\footnote{Even if this result is stated in \cite{AlTakiLacave} for simply connected domains, it holds true for lakes with islands. Indeed, elliptic regularity in the interior of the lake comes from an easy local elliptic argument, whereas the regularity close to a boundary is exactly the difficult and established result in \cite{AlTakiLacave}. For more details about the localization of elliptic estimates around each boundary, see \cite[Sections 2.2 and 3]{DekeyserVanS}.} $v$ is log-lipschitz on $\overline{\Omega}$, namely 
 \[
 |v(t,x)-v(t,y)|\leq C(\|\omega\|_{L^\infty([0,T]\times \Omega)})|x-y|\big(1+\big|\ln |x-y|\big|\big).
 \]

We establish further properties of this solution.

\begin{proposition}\label{prop:transport}
Let $\omega^0\in L^\infty(\Omega)$ with compact support in $\Omega$, and set
$$\delta_0:=\dist(\supp \omega_{0},\partial\Omega)=
\inf\left\{ \mathrm{dist}(x,\partial\Omega):\:\: x\in \supp \omega^0\right\}.$$
Let $T>0$ and let $(v,\omega)$ be the unique weak solution of the lake equations on $[0,T]$ given by Proposition~\ref{prop:well-posed}. 
\begin{enumerate}
\item[(i)] There exists a compact subset $K_T$ of $\Omega$, depending only on $\|\omega^0\|_{L^\infty}$, $\delta_{0}$ and $T$, such that 
\[
\supp \omega(t,\cdot)\subset K_T, \quad \forall t\in [0,T].
\]
\item[(ii)] We have $\|b^{1/p}\omega(t,\cdot)\|_{L^p}=\|b^{1/p}\omega^0\|_{L^p}$, for $t\in [0,T]$, for $p\in [1,\infty)$.
\item[(iii)] There exists a unique flow associated to $v$ in the classical sense: for any $x \in \Omega$ and $t_0\in [0,T]$, there exists a unique characteristic curve $X(\cdot, t_0,x)\in C^1([0,T] ; \Omega)$ solving
\[
\frac{\dd X(t,t_0,x)}{\dd t}=v(t,X(t,t_0,x)), \quad X(t_0,t_0,x)=x.
\]
Moreover, we have $\omega(t,x)=\omega^0(X(0,t,x)))$ for all $t\in [0,T]$ and $x\in \Omega$.
\end{enumerate}
\end{proposition}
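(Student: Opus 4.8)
The plan is to build the classical flow of $v$, to show by an Osgood-type argument that no trajectory issued from a compact subset of $\Omega$ can reach $\partial\Omega$ in finite time, and then to read off the three statements from the Lagrangian representation of $\omega$. The only genuinely nontrivial analytic input is already available: by Proposition~\ref{prop:well-posed} one has $v\in C([0,T];W^{1,r}(\Omega))$ for every finite $r$, hence $v$ is jointly continuous on $[0,T]\times\overline\Omega$ by Sobolev embedding, and by \cite{AlTakiLacave} the field $v(t,\cdot)$ is log-Lipschitz on $\overline\Omega$ uniformly in $t\in[0,T]$. First I would invoke the Cauchy--Lipschitz--Osgood theory: existence of integral curves of $\dot X=v(t,X)$ follows from continuity (Peano), while uniqueness, the semigroup property and continuity of $X$ in all its variables follow from the log-Lipschitz modulus $\mu(s)=s(1+|\ln s|)$, which satisfies the Osgood condition $\int_{0}\mathrm{d}s/\mu(s)=+\infty$. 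Since $v$ is continuous up to $\partial\Omega$ with $v\cdot\mb n=0$ there, uniqueness forces $\partial\Omega$ to be invariant, so trajectories cannot cross it; the $C^1$ regularity in $t$ of Statement (iii) is then immediate from continuity of $v$.

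The heart of the argument, and what I expect to be the main obstacle, is the quantitative confinement of Statement (i). Writing $d(x)=\dist(x,\partial\Omega)$, which is $C^2$ in a tubular neighborhood of $\partial\Omega$ where $\nabla d(x)=-\mb n(\bar x)$ with $\bar x$ the boundary projection of $x$, I would estimate, along a trajectory staying in this neighborhood,
\begin{equation*}
 \Big|\frac{\mathrm d}{\mathrm dt}\,d(X(t))\Big|=\big|\nabla d(X(t))\cdot v(t,X(t))\big|=\big|\mb n(\bar X)\cdot\big(v(t,X(t))-v(t,\bar X)\big)\big|\le C\,d(X(t))\big(1+|\ln d(X(t))|\big),
\end{equation*}
where the middle equality uses $v(t,\bar X)\cdot\mb n(\bar X)=0$ and the last inequality is the log-Lipschitz bound, with constant $C$ controlled by $\|\omega^0\|_{L^\infty}$ (the $L^\infty$ norm being conserved by transport). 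Integrating this Osgood differential inequality yields a positive lower bound $d(X(t,t_0,x))\ge\eta_T(d(x))>0$ depending only on $C$ and $T$; crucially this remains valid even when $b$ vanishes on $\partial\Omega$, because the delicate degeneracy has been absorbed into the log-Lipschitz regularity of $v$ up to the boundary. In particular the flow is global on $[0,T]$ with values in $\Omega$, and $K_T:=\{x\in\overline\Omega:\ d(x)\ge\eta_T(\delta_0)\}$ is a compact subset of $\Omega$ trapping every trajectory starting in $\supp\omega^0$.

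It remains to establish the Lagrangian representation, which then closes Statements (i) and (ii). On $K_T$ the depth $b\in C^3$ is bounded below, so $\diver v=-v\cdot\nabla(\ln b)$ is bounded there; combining the anelastic constraint $\diver(bv)=0$ with Liouville's formula (justified by mollifying $v$, since the flow need not be $C^1$) gives $b(X(t,t_0,x))\det\nabla_x X(t,t_0,x)=b(x)$, i.e. the flow preserves the measure $b\,\mathrm dx$. Next, expanding $\partial_t(b\omega)+\diver(bv\omega)=0$ with $\partial_t b=0$ and $\diver(bv)=0$ reduces the continuity equation to the transport equation $\partial_t\omega+v\cdot\nabla\omega=0$ on $\{b>0\}$; since $v$ is log-Lipschitz, bounded weak solutions of this transport equation are unique and coincide with the Lagrangian one (a DiPerna--Lions/Ambrosio-type renormalization, in the form available here for log-Lipschitz drifts, cf. the representation used for point vortices in \cite{Marchioro-Pulvirenti-book}), so $\omega(t,x)=\omega^0(X(0,t,x))$.

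Statement (i) then follows because $\supp\omega(t,\cdot)=X(t,0,\supp\omega^0)\subset K_T$, and Statement (ii) follows from the measure-preservation by the change of variables $y=X(t,0,x)$:
\begin{equation*}
 \int_\Omega b(y)\,|\omega(t,y)|^p\,\mathrm dy=\int_\Omega b(x)\,|\omega^0(x)|^p\,\mathrm dx .
\end{equation*}
The two places demanding care are the boundary confinement, where the geometry of $\partial\Omega$ interacts with the degeneracy of $b$ but is saved by the up-to-the-boundary log-Lipschitz bound, and the identification of the Eulerian weak solution with the flow for a merely $L^\infty$ vorticity transported by a non-$C^1$ flow.
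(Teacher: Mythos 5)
Your flow construction and Osgood confinement estimate are sound and essentially coincide with the paper's own (the paper works with $\varphi$ in place of $\dist(\cdot,\partial\Omega)$, proves $|\nabla\varphi\cdot v|\leq -C\varphi\ln\varphi$ from tangency plus the log-Lipschitz bound, and integrates the same Osgood inequality to get $\varphi(X(t,t_0,x))\geq \varphi(x)^{\beta}$). The genuine gap is the step on which your whole derivation of (i)--(iii) rests: the claim that, because $v$ is log-Lipschitz, bounded \emph{distributional} solutions of $\partial_t\omega+v\cdot\nabla\omega=0$ on $\Omega$ are unique and coincide with the Lagrangian one. Uniqueness for linear transport/continuity equations does not follow from the regularity of the field alone; it requires control of the compressibility, i.e.\ of $\diver v$: one needs $\diver v\in L^1_tL^\infty_x$ in DiPerna--Lions \cite{Dip}, or exponential integrability $\exp(T_0|\diver v|)\in L^1$ in Desjardins' refinement \cite{Desjardins}. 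Here $\diver v=-\nabla b\cdot v/b$ is \emph{unbounded} when $b$ vanishes on $\partial\Omega$ (it blows up like $|\ln\dist(x,\partial\Omega)|$), so no off-the-shelf statement applies, and the reference you invoke (\cite{Marchioro-Pulvirenti-book}) concerns divergence-free fields. This is exactly the difficulty the paper singles out as the main one, and it is why your order of argument --- prove the Lagrangian identification (iii) first, then read off (i) and (ii) --- cannot be run as written: identifying the given Eulerian solution with the flow \emph{on all of $\Omega$} is precisely where the boundary degeneracy must be confronted, and a trajectory-confinement bound for the flow says nothing a priori about where a distributional solution lives.

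The paper's proof reverses the order to avoid this. It first verifies $\exp(T_0|\diver v|)\in L^1$ for small $T_0$ (using the same tangency/log-Lipschitz estimate you use for confinement) and invokes Desjardins' lemma to get uniqueness and the renormalization property of the linear equation; renormalization makes $|\omega|^p$ a weak solution, which yields (ii) by a cutoff argument, and, with a time-dependent cutoff $\chi_0(\varphi(x)/r(t))$ with $r(t)=\exp(-e^{C(t+s_0)})$, yields the support confinement (i) by purely Eulerian means, never using the flow. Only then is (iii) proved: knowing $\supp\omega(t,\cdot)$ stays in a fixed compact subset of $\Omega$, one truncates the velocity to $\tilde v=v\chi$, which has \emph{bounded} divergence, applies classical DiPerna--Lions on $\R^2$ to identify $\omega$ with the Lagrangian solution of $\tilde v$, and checks by an ODE argument that the truncated flow agrees with $X$ along the relevant trajectories. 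To salvage your plan you would either import Desjardins' result (your own confinement estimate already contains the required divergence bound) or restructure as the paper does. A secondary issue: your Liouville-formula step asserting $b(X)\det\nabla_xX=b(x)$ is not meaningful as stated, since the flow of a log-Lipschitz field is only H\"older in $x$ and $\det\nabla_x X$ need not exist pointwise; preservation of the measure $b\,\dd x$ has to be phrased as a push-forward identity and proved by a stability/renormalization argument --- which is again the machinery your proposal takes for granted.
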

For simplicity we will further denote $X(t,x)=X(t,0,x)$.

We mention that stability and existence of renormalized solutions are established by {\sc D. Bresch} and {\sc P.-E. Jabin} \cite{BJ} for a class of advective equations with a vector field satisfying a degenerate anelastic constraint, which includes the case of the lake equations \eqref{Inviscid-vort} considered in the setting of our paper.

Proposition~\ref{prop:transport} is proved in Appendix~\ref{sec:transport-appendix}. The proof relies on several arguments from the theory of linear transport equations developed by {\sc R. J. Di Perna} and {\sc P. L. Lions} \cite{Dip} with vector fields with bounded divergence. The main difficulty here is that $\diver(v)=-\nabla b\cdot v/b$ is not bounded on $\Omega$ if $b$ vanishes on the boundary. 


Here, the fact that the log-lipschitz constant of the velocity field $v_\eps$ depends on $\|\omega_{\varepsilon}(t,\cdot)\|_{L^\infty}=\|\omega_{\varepsilon}^0\|_{L^\infty}$, diverging possibly as $\varepsilon^{-2}$, constitutes a major difficulty. Indeed, this does a priori not suffice to infer a lower bound for the distance to the boundary $\delta_{T,\varepsilon}$ when $\varepsilon$ tends to zero. Such a uniform control of the distance to the boundary will be included in the forthcoming definition of $T_{\varepsilon}$, see \eqref{eq:Teps main}, and one of the main consequence of the strong localization will be to state that $T_{\varepsilon}=T$.

\begin{remark}\label{rem:omega_i}
 In view of the last item of Proposition~\ref{prop:transport}, it is natural to define the decomposition of $\omega_{\varepsilon}$ in Theorem~\ref{thm:main} as the transport of the decomposition of the initial data, see Assumption~\ref{ass: initial data}: 
 \[\omega_{\varepsilon,i}(t,x):=\omega^0_{\varepsilon,i}(X_{\varepsilon}(0,t,x))).\]
\end{remark}

\subsection{Green kernel for the lake equations}

A classical feature of inviscid flow is the reconstruction of the velocity in terms of the vorticity. The vector field $v_{\varepsilon}$ is uniquely determined by $\omega_{\varepsilon}$ and $(\Gamma_{\varepsilon}^k)_{k=1}^{N_{is}}$ through the following div-curl problem
\begin{equation}\label{eq:divcurl}
\begin{aligned}
& \diver(b\veps)=0 \text{ in } \quad \Omega, \quad \curl\veps=b\weps \text{ in } \Omega, \quad b\veps\cdot \mb n =0 \text{ on }\partial\Omega,\\
 & \oint_{\partial\cal I_{k}} \veps\cdot \mb \tau \, \dd s=\Gamma^{k}_{\varepsilon}\quad \text{for }k=1,\dots,N_{is}.
\end{aligned}
\end{equation}
This vector field can be represented in terms of stream functions \cite[Proposition 2.10]{LNP},
\begin{equation}\label{BS:is}
 bv_{\varepsilon}=\nabla^{\perp}\Psi_{\varepsilon} + \sum_{k=1}^{N_{is}} \Big(\Gamma_{\varepsilon}^k + \int_{\Omega}b\omega_{\varepsilon}\varphi^k \Big)\nabla^\perp \psi^k,
\end{equation}
where $\Psi_{\varepsilon}\in H^1_0(\Omega)$ is the unique solution to 
\begin{equation}\label{eq:LapDir}
 \diver\left(\frac{1}{b}\nabla\Psi_{\varepsilon}\right)=b\omega_{\varepsilon} \quad \text{in} \quad \Omega.
\end{equation}
The functions $\varphi^k$ and $\psi^k$ are $b$-harmonic functions and form in particular a basis for the space of $b$-harmonic functions, we refer to \cite[Section 2.1]{LNP} for definitions. However, as they are independent of $\omega_{\varepsilon}$ and $\varepsilon$, we only need very weak properties for these functions, namely that
\begin{equation}\label{est:phipsi}
\varphi^k, \nabla \psi^k, D^2\psi^k\in L^\infty\Big(\bigcup_{i=1}^{N_{v}}\cal C_{\rho_{b},i}\Big) 
\end{equation}
where $\cal C_{\rho_b,i}$ is the neighborhood of $\cal C_{i}$ as defined in Remark~\ref{rem:r0}. This property comes from local elliptic regularity.

While in general no explicit Biot-Savart formula seems available for \eqref{eq:LapDir}, {\sc J. Dekeyser} and {\sc J. Van-Schaftingen} establish in \cite[Proposition 3.1]{DekeyserVanS} an interesting relation between the kernel $G_{\Omega,b}$ associated to this problem and the usual Laplacian kernel in bounded domain $G_{\Omega}$: we have 
\[
 G_{\Omega,b}(x,y) = G_{\Omega}(x,y)\sqrt{b(x)b(y)} + S_{\Omega,b}(x,y),
\]
where the remainder term is defined for all $y\in \Omega$ as a function $x\mapsto S_{\Omega,b}(x,y)$ such that 
\begin{equation}\label{eq:Sb}
\left\{\begin{aligned}
& \diver_{x}\Big( \frac1{b(x)} \nabla_{x} S_{\Omega,b}(x,y)\Big)= G_{\Omega}(x,y) \sqrt{b(y)} \Delta\frac1{\sqrt{b(x)}} \text{ in }\mathcal{D}'(\Omega), \\
 & S_{\Omega,b}(x,y)=0 \text{ for all }x\in \partial\Omega.
\end{aligned}\right.
\end{equation}
 For the non-vanishing topography, it is proved in \cite{DekeyserVanS} that $S_{\Omega,b}(\cdot,y)$ is in $W^{1,\infty}(\Omega)$. 
 In the present case of a vanishing topography, the existence and regularity of $S_{\Omega,b}(\cdot,y)$ is far from being obvious because $ \Delta\frac1{\sqrt{b(x)}}$ lacks to be integrable. This question is solved in \cite[Lemma 3.1]{AlTakiLacave} where the existence of a unique solution $S_{\Omega,b}(\cdot,y)$ is proven. Moreover, it was also established therein that for any $\delta>0$, there exists $C_{\delta}>0$ which depends only on $\Omega,b$ and $\delta$ such that
 \[
 \Big\| \frac1{\sqrt{b}} \nabla_{x}S_{\Omega,b}(\cdot,y) \Big\|_{L^2(\Omega)} \leq C_{\delta}, \quad \text{for all $y\in \Omega$ such that }{\rm dist}(y,\partial\Omega)\geq \delta.
 \]
 By the Poincar\'e inequality, it also follows that $ \Big\| S_{\Omega,b}(\cdot,y) \Big\|_{L^2(\Omega)} \leq C_{\delta}$.
Let 
\begin{equation}\label{eq:Omega delta}
\Omega_{\delta}:=\{x\in\Omega, \dist(x,\partial\Omega)\geq \delta\}.
\end{equation}
 As $b(x)\geq \tilde C_{\delta}>0$ on $\Omega_{\delta/2}$, the elliptic problem $\diver(b^{-1} \nabla\cdot )$ is non degenerate, and standard elliptic estimates give for any $p>2$
\[
 \| S_{\Omega,b}(\cdot,y) \|_{W^{2,p}(\Omega_{\delta})} \leq C_{\delta,p} \Big( \|G_{\Omega}(\cdot,y)\|_{L^{p}(\Omega_{\delta/2})} +\| S_{\Omega,b}(\cdot,y) \|_{L^{2}(\Omega_{\delta/2})} \Big) \leq C_{\delta}
 \]
which implies that
\begin{equation*}
 | S_{\Omega,b}(x,y) | + | \nabla_{x} S_{\Omega,b}(x,y) | \leq C_{\delta}, \quad \text{for all } x,y\in \Omega_{\delta}.
\end{equation*}
 
Finally, this leads to the stream function
\begin{equation}\label{eq:BS}
 \Psi_{\varepsilon}(x)=\int_\Omega G_{\Omega,b}(x,y) (b\omega_{\varepsilon})(y) \,\dd y.
\end{equation}

In the present paper, we are only interested in vorticities with support in $ \Omega_{r_{0}}$. Therefore, we can decompose $G_{\Omega,b}$ as follows: let
\begin{equation}\label{eq.Gdecomp}
 G_{\Omega,b}(x,y) = \frac1{2\pi}\sqrt{b(x)b(y)} \ln |x-y| + R_{\Omega,b}(x,y),
\end{equation}
 where the remainder part satisfies
\begin{equation}\label{est:R}
 | R_{\Omega,b}(x,y) | + | \nabla_{x} R_{\Omega,b}(x,y) | \leq C_{\delta}, \quad \text{for all } x,y\in \Omega_{\delta}.
\end{equation}
The estimates for $R_{\Omega,b}$ are derived from the estimates of $S_{\Omega,b}$ and from the fact that for all $y\in \Omega_{\delta}$ fixed, $\tilde R_{y}(x):=G_{\Omega}(x,y)-\frac1{2\pi}\ln|x-y|$ is harmonic in $\Omega$ and verifies $\tilde R_{y}(x)=-\frac1{2\pi}\ln|x-y|$ on $\partial\Omega$, which is bounded independently of $y\in\Omega_{\delta}$. Finally, we also use that $\nabla \sqrt{b}$ is bounded in $\Omega_\delta$.

\medskip

We conclude this section by recalling the mean value theorem for general $\Omega$ not necessarily convex. By the regularity of $\partial\widetilde\Omega$, where $\widetilde{\Omega}$ is defined in Assumption~\ref{assum:lake}, we know that $\widetilde\Omega_{\delta}$ is of the same shape as $\widetilde\Omega$ for $\delta>0$ small enough, namely a simply connected open bounded set. Similarly, for $\Omega_{\delta}$ one retrieves $\widetilde\Omega_\delta$ minus $N_{is}$ connected compact subsets. For such a $\delta$, we consider $\widehat{\Omega}_{\delta}$ such that $\Omega_{\delta} \subset \widehat{\Omega}_{\delta} \subset \Omega_{\delta/2}$ with $\partial\widehat{\Omega}_{\delta}$ being composed by $N_{is}+1$ Jordan curves. We claim that $\widehat{\Omega}_{\delta}$ is $a_{\delta}$-{\it quasiconvex} for some $a_{\delta}\geq 1$, that is, for any $x,y\in\widehat{\Omega}_{\delta}$ there exists a rectifiable path $\gamma\subset \widehat{\Omega}_{\delta}$ joining $x,y$ and satisfying $\ell(\gamma) \leq a_{\delta} |x-y|$.
This follows from the fact that $\widehat{\Omega}_{\delta} = \widetilde{\Omega}_{\delta}\setminus \bigcup_{k=1}^{N_{is}} \cal I_{\delta}$ where $\partial\widetilde{\Omega}_{\delta}$ and $\partial\cal I_{\delta}$ are piecewise $C^1$ Jordan curves with no cusp and hence a quasidisc (see, e.g., \cite{Gustafsson}). {\sc L. V. Ahlfors} shows in \cite{Ahlfors} that in 2D, we have
\begin{gather*}
 \partial\widetilde{\Omega}_{\delta} \text{ is a quasidisk} \Longleftrightarrow \widetilde{\Omega}_{\delta} \text{ is quasiconvex},\\
\partial \cal I_{k} \text{ is a quasidisk} \Longleftrightarrow\cal I_{k}^c\text{ is quasiconvex}.
\end{gather*}
Splitting $\widehat{\Omega}_{\delta}$ on neighborhoods of the boundary, this allows us to conclude that $\widehat{\Omega}_{\delta}$ is quasiconvex.

Therefore, for any $x,y\in \Omega_{\delta}$ and any $f\in C^1(\Omega_{\delta/2})$ we have
\begin{equation}\label{MVT}
 |f(x)-f(y)| = \Big|\int_{0}^1 \frac{\dd}{\dd s} (f(\gamma(s)))\,\dd s\Big| \leq \| \nabla f \|_{L^\infty(\Omega_{\delta/2})}a_{\delta} |x-y|.
\end{equation}

We note here that for a convex domain, we consider $\gamma$ the segment between $x$ and $y$, so $a_{\delta}=1$ in \eqref{MVT}. We will use \eqref{MVT} with $b^{-1}\in C^1(\Omega_{\delta/2})$ in view of Assumption~\ref{assum:lake}.

\section{The reduced system for a single vortex}\label{sec:redsystem}

As will be proved in Theorem~\ref{thm:main}, the asymptotic vortex dynamics is - at leading order - determined by the interaction with the topography $b$. We therefore aim to reduce the problem of proving the vortex dynamics to a problem for a single vortex, where the interaction between vortices is accounted for by an additional external field $F_\eps$:
\begin{equation}\label{eq:transport-1}
\left\{
\begin{array}{l}
\displaystyle \partial_t(b \omega_\eps)+\frac{1}{\lne}\diver(b (v_\eps+F_{\eps}) \omega_\eps)=0 \quad \text{in } (0,\infty)\times \Omega \vspace*{0.2cm}\\
\displaystyle\curl v_{\varepsilon} = b\omega_{\varepsilon} \quad \text{and}\quad \diver(b v_{\varepsilon})=0 \quad \text{in } [0,\infty)\times \Omega\\
 \displaystyle\oint_{\partial\cal I_{k}} \veps\cdot \mb \tau \, \dd s=\Gamma^{k}_{\varepsilon}\quad \text{on } [0,\infty) \text{ and for }k=1,\dots,N_{is} \\
\displaystyle (b v_{\varepsilon})\cdot \mathbf{n}=0\quad \text{on } [0,\infty)\times \partial\Omega, \quad
\omega_{\eps}(0,\cdot)=\omega_{\eps}^0 \quad \text{in } \Omega.
\end{array}
\right.
\end{equation}

The assumption concerning the initial data is then a trivial reduction of Assumption~\ref{ass: initial data}.

\begin{assumption}\label{data red}
Let $z^0\in \Omega$ and $\omega_\eps^0\in L^\infty(\Omega)$. We denote by $\mathcal{C}$ the connected component of the level set $\{x\in \overline{\Omega}\ :\ b(x)=b(z^0)\}$ containing $z^0$. Assume that $\cal C\cap \partial\Omega=\emptyset$ and that there exist $M_{0},\varepsilon_{0}>0$ and $\delta \in \{-1,1\}$ such that for all $\eps\in (0,\varepsilon_{0}]$ it holds
\begin{equation*}
 \supp(\weps^0)\subset B(z^0,M_0\eps), \quad 0\leq \delta\weps^0 \leq \frac{M_0}{\eps^2} , \quad \gamma_\eps:=\int_{\Omega}b\weps^0\dd x\Tend{\varepsilon}{0} \gamma\in \R^*.
\end{equation*}
\end{assumption}

We next need to define suitable assumptions on $F_\eps$, which are verified by the velocity field generated by the other vortices and which will be sufficient to prove our main reduced theorem. Namely, we consider that the external field $F_\eps$ is characterized by the following.
We consider the neighborhood $\cal C_{\rho_b}$ of $\cal C$ as defined in Remark~\ref{rem:r0} (we omit the index $i$ in the case of one vortex).
\begin{assumption}\label{ass: F}
The external field $F_{\eps}$ satisfies
\begin{enumerate}[(i)]
 \item the incompressibility and tangency conditions
 \begin{equation*}
 \diver(bF_{\eps})=0, \quad (bF_{\eps})\cdot \mb n=0,
 \end{equation*}
 \item a Lipschitz regularity estimate: for any $T>0$, there exists $C_{F},L_F>0$ depending only on $b$, $\Omega$, $(z_i^0)_{i=1,\dots, {N_v}}$, $(\gamma_i)_{i=1,\dots, {N_v}}$, $(\Gamma^i)_{i=1,\dots, {N_{is}}}$, $M_{0}$, $\varepsilon_{0}$, but not depending on $\eps\in (0,\varepsilon_{0}]$, such that for all $(t,x,y)\in [0,T]\times C_{\rho_b}\times C_{\rho_b}$ there holds 
 \begin{equation*}
 |F_{\eps}(x,t)|\leq C_{F}, \quad |F_{\eps}(t,x)-F_{\eps}(t,y)|\leq L_F|x-y|.
 \end{equation*}
\end{enumerate}
\end{assumption}
Concerning the flexibility of this approach for adaptions to different scaling regimes, we refer to Remark~\ref{rem:LipF}.

The reduced version of Theorem~\ref{thm:main} reads as follows.

\begin{theorem}\label{thm: main red system}
Let $(\Omega,b)$ be a lake satisfying Assumption~\ref{assum:lake}, the initial $\weps^0$ sharply concentrated in the sense of Assumption~\ref{data red}, $F_{\eps}$ satisfying Assumption~\ref{ass: F} and let $(\Gamma_{\varepsilon}^k)_{k=1}^{N_{is}}$ be a sequence in $\R^{N_{is}}$ converging to $(\Gamma^k)_{k=1}^{N_{is}}$ as $\varepsilon\to 0$. Let
\begin{equation}\label{eq: single z}
 \dot{z}(t)=-\frac{\gamma}{4\pi} \frac{\nabla^{\perp}b(z(t))}{b(z(t))}, \quad z(0)=z^0.
\end{equation}
Then the following holds for all $T>0$ and $\weps$ solution\footnote{Such that $\omega_{\varepsilon}\in C([0,T];L^r(\Omega))$ for any $r\in[1,\infty)$ and where $(V_{\varepsilon},\omega_{\varepsilon})$ is a weak solution in the sense of (ii) in Proposition~\ref{prop:well-posed} and satisfies Proposition~\ref{prop:transport} for $V_{\varepsilon}=\frac1{\lne}(v_{\varepsilon}+F_{\varepsilon})$.} to \eqref{eq:transport-1} with initial data $\weps^0$:
\begin{enumerate}[(i)]
\item The vorticity $\weps$ is strongly localized in the direction of steepest ascent of $b$, namely for every $k\in (0,1/4)$ there exist $\eps_{k,T}, C_{k,T}>0$ depending only on $(z_i^0)_{i=1,\dots, {N_v}}$, $(\gamma_i)_{i=1,\dots, {N_v}}$, $(\Gamma^i)_{i=1,\dots, {N_{is}}}$, $M_{0}$, $b$, $\Omega$, $k$ and $T$ such that $\supp \omega_{\eps}(t,\cdot) \subset \cal C_{\rho_{b}}$ for all $t\in [0,T]$. 
Moroever,
\begin{equation*}
 \supp \weps(t,\cdot)\subset \left\{x\in \Omega \, \, : \, |b(x)-b(z^0)|\leq \frac{C_{k,T}}{|\ln\eps|^k}\right\}
\end{equation*}
for all $\varepsilon\in (0,\varepsilon_{k,T}]$.
 \item The vorticity $\weps$ is weakly localized, namely there exist $z_{\eps}\in C([0,T];\Omega)$ and $C$ depending only on $(z_i^0)_{i=1,\dots, {N_v}}$, $(\gamma_i)_{i=1,\dots, {N_v}}$, $(\Gamma^i)_{i=1,\dots, {N_{is}}}$, $M_{0}$, $b$, $\Omega$ and $T$, such that for all $\eps\in (0,\varepsilon_{\frac18,T}]$ and $t\in [0,T]$
 \[
 \Bigg| \gamma - \int_{B(z_{\varepsilon}(t) ,R_{\varepsilon})} b\weps(t,x)\, \dd x \Bigg| \leq \frac{C}{\ln|\ln\varepsilon|}\quad \text{where }R_{\varepsilon}=\left(\frac{\ln|\ln\varepsilon|}{|\ln\varepsilon|}\right)^{1/2}.
 \]
 \item We have
 \[
 \sup_{t\in [0,T]} | z_{\varepsilon}(t) - z(t) | \to 0\quad \text{as }\varepsilon\to 0,
 \]
 where the limit trajectory $z$ is the solution of \eqref{eq: single z}.
\end{enumerate}
\end{theorem}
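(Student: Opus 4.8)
My plan is to run a Marchioro--Pulvirenti type concentration argument adapted to the anisotropic geometry dictated by the level sets of $b$. The starting point is the Biot--Savart representation \eqref{eq:BS} combined with the kernel splitting \eqref{eq.Gdecomp}. Differentiating, I would write the self-induced velocity as $v_\eps = v_\eps^L + v_\eps^S + v_\eps^R$, where
\[
v_\eps^L(x) = \frac{\nabla^\perp\sqrt{b(x)}}{2\pi\, b(x)}\int_\Omega \sqrt{b(y)}\,\ln|x-y|\,(b\weps)(y)\,\dd y
\]
is the leading logarithmic term, $v_\eps^S$ the singular $2$D--Euler--like part carrying the factor $(x-y)^\perp/|x-y|^2$, and $v_\eps^R$ collects the contributions of $R_{\Omega,b}$, the island stream functions $\nabla^\perp\psi^k$ and the external field $F_\eps$, all bounded on $\cal C_{\rho_b}$ by \eqref{est:R}, \eqref{est:phipsi} and Assumption~\ref{ass: F}. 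The two structural facts I would exploit are: (a) $v_\eps^L$ is everywhere parallel to $\nabla^\perp\sqrt b$, whence $\nabla b\cdot v_\eps^L\equiv 0$, so the dominant $\mathcal O(\lne)$ velocity is tangent to the level sets of $b$ and cannot change $b$; and (b) when $x,y$ range over a core of size $r_\eps$ one has $\ln|x-y|\approx \ln r_\eps$, so $v_\eps^L$ reduces near the center to $-\tfrac{\gamma}{4\pi}\tfrac{\nabla^\perp b}{b}\lne$ up to lower order, which is exactly the right-hand side of \eqref{eq: single z} after dividing by $\lne$.

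I would then introduce a stopping time $T_\eps\le T$ up to which $\supp\weps$ stays well inside $\cal C_{\rho_b}$ (in particular at distance $\ge r_0/2$ from $\partial\Omega$, so the kernel estimates apply), and establish confinement on $[0,T_\eps]$. The heart of the proof is a moment estimate in the $b$-direction: for $p\ge 2$ set $W_p(t):=\int_\Omega |b(x)-b(z^0)|^{p}\,(b\weps)(t,x)\,\dd x$. Testing \eqref{eq:transport-1} against $|b-b(z^0)|^p$ gives
\[
\frac{\dd}{\dd t}W_p = \frac{p}{\lne}\int_\Omega |b-b(z^0)|^{p-2}(b-b(z^0))\,\nabla b\cdot(v_\eps+F_\eps)\,(b\weps)\,\dd x.
\]
By fact (a) the $\mathcal O(\lne)$ term drops out identically; the $v_\eps^R$ and $F_\eps$ contributions are $\mathcal O(1)$ and are absorbed into $W_{p-1}\le \gamma_\eps^{1/p}W_p^{(p-1)/p}$; the delicate term is the one carrying $v_\eps^S$. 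This I would control by symmetrizing the resulting double integral in $x\leftrightarrow y$: the antisymmetry of $(x-y)^\perp/|x-y|^2$ turns the integrand into differences $\sqrt{b(y)/b(x)}\,g(x)-\sqrt{b(x)/b(y)}\,g(y)$ with $g=|b-b(z^0)|^{p-2}(b-b(z^0))\nabla b$, which vanish at $x=y$ and are Lipschitz, so the singular kernel collapses to an integrable $1/|x-y|$. With $\|b\weps\|_{L^1}\sim 1$ and $\|b\weps\|_{L^\infty}\lesssim\eps^{-2}$ this yields a closed differential inequality; since $W_p(0)\lesssim \eps^{p}$ by Assumption~\ref{data red}, integration gives $W_p(t)\lesssim \lne^{-1}$ on $[0,T_\eps]$.

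From the moment bound I would deduce the two localization statements. The weak localization (ii) follows from a Chebyshev argument applied to a companion \emph{spatial} second-moment estimate around a center $z_\eps(t)$ (e.g.\ a center of $b\weps$-mass), optimizing the radius to the stated $R_\eps=(\ln\lne/\lne)^{1/2}$; the escaping filament carries mass $\mathcal O(1/\ln\lne)$. The strong localization (i) is more subtle, since the moment bound controls only mass whereas \eqref{Eq.strong} is a deterministic support bound. Here I would run Marchioro's iterative confinement scheme in the single variable $b$: the small amount of vorticity at $b$-distance $\ge\lambda$ from $b(z^0)$ generates, through $v_\eps^S$, only a $b$-velocity proportional to that small mass, so a dyadic bootstrap on $\lambda$ propagates the confinement down to scale $\lne^{-k}$ for any $k<1/4$, the exponent loss being an artifact of the iteration. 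Once \eqref{Eq.strong} holds with $\lne^{-k}<\rho_b$, the support cannot reach $\partial\cal C_{\rho_b}$, which forces $T_\eps=T$ by continuation and makes the estimates global on $[0,T]$. Finally, for (iii) I would track $\dot z_\eps$, show using fact (b) together with the confinement that it equals $-\tfrac{\gamma}{4\pi}\nabla^\perp b(z_\eps)/b(z_\eps)$ up to an error tending to $0$, and close against \eqref{eq: single z} by Gronwall.

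I anticipate the main obstacle to be the strong localization step: controlling the spreading of the support in the $b$-direction despite $v_\eps^S$ being of size $\eps^{-1}$ in the core. The symmetrization handles the averaged (moment) effect, but upgrading this to a pointwise support bound --- while simultaneously closing the stopping-time argument so that confinement is valid on all of $[0,T]$ rather than on a possibly short $[0,T_\eps]$ --- requires the delicate dyadic iteration and is precisely where the restriction $k<1/4$ originates.
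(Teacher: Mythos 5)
Your decomposition is exactly the paper's \eqref{eq:decomposition} (your $v_\eps^S,v_\eps^L,v_\eps^R$ are $v_{\eps,K},v_{\eps,L},v_{\eps,R}$), and your plan for the transverse estimates and for part (i) matches the paper's Sections~\ref{sec:BS} and \ref{sec:transverse}: the orthogonality $\nabla b\cdot v_{\eps,L}\equiv 0$ removes the $\mathcal O(\lne)$ term from all moments taken in the variable $b$, symmetrization of the antisymmetric kernel controls the $v_{\eps,K}$ contribution (this is Lemma~\ref{lemma:J}, giving \eqref{eq:est-K}), and the Marchioro-type iteration in the single variable $b$ (Lemmas~\ref{lemma:Rt} and \ref{lemma-step}, imported from the vortex-ring setting) produces the support bound with $k<1/4$ and then $T_\eps=T$ by continuation. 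That portion of your proposal is sound and is essentially the paper's argument.

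The genuine gap is in parts (ii) and (iii), where $v_{\eps,L}$ does \emph{not} drop out and your outline never confronts it. For the spatial moment $\Ie(t)=\int_\Omega|x-z_\eps(t)|^2(b\weps)(t,x)\,\dd x$, the term $\frac2\lne\int_\Omega (x-z_\eps)\cdot v_{\eps,L}\,(b\weps)\,\dd x$ survives (fact (a) only kills the component along $\nabla b$, not along $x-z_\eps$), and since $|\psi_\eps|\sim\lne$ it is a priori only $\mathcal O(\sqrt{\Ie})$; the resulting inequality $\frac{\dd}{\dd t}\Ie\lesssim\sqrt{\Ie}$ gives $\Ie(t)\lesssim t^2$ and no concentration at all. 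The paper closes this estimate (Proposition~\ref{prop:momentum}) only by pairing that term with $-2\dot z_\eps\cdot\int(x-z_\eps)(b\weps)\,\dd x$ via \eqref{eq:derivee-z} and then controlling the fluctuation $\gamma_\eps\psi_\eps(x)-\int\psi_\eps(b\weps)\,\dd y$ in $L^2(b\weps)$ — Lemma~\ref{lemma:L2} — whose proof rests on the near-conservation of the energy $E_\eps$ (Proposition~\ref{prop:energy}); no energy argument appears anywhere in your plan. The same missing ingredient breaks your step (iii): your ``fact (b)'' ($\ln|x-y|\approx\ln r_\eps$ across the core) presupposes that the vorticity remains at a scale $\eps^c$, but the only confinement you (or the paper) can prove is at the tangential scale $R_\eps=(\ln\lne/\lne)^{1/2}$ and transversal scale $\lne^{-k}$, which forces merely $|\psi_\eps|\gtrsim \ln\lne$ on the support rather than $|\psi_\eps|\approx\gamma\, b(z^0)^2\lne$. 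With that information alone the rescaled vortex could a priori spread and move with speed $o(1)$, so the coefficient $\gamma/4\pi$ in \eqref{eq: single z} cannot be identified. The paper pins it down by writing $\int\psi_\eps(b\weps)\,\dd x=-2\pi E_\eps(t)+\mathcal O(1)=-\gamma_\eps^2 b(z^0)\lne+\mathcal O(1)$: it is the conserved energy, not the localization, that ``remembers'' the initial scale $\eps$. Without this energy mechanism, your scheme cannot yield either the weak concentration estimate of (ii) or the motion law of (iii).
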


The proof of Theorem~\ref{thm: main red system} constitutes the main and difficult step towards the main result of this paper. Indeed, Theorem~\ref{thm:main} will then follow from Theorem~\ref{thm: main red system} upon proving that the vorticity $\weps$ is given by the superposition of vortices $\omega_{\varepsilon,i}$ that evolve according to \eqref{eq:transport-1} and where the external field accounting for the interaction with the other vortices satisfies Assumption~\ref{ass: F} for all times. For that purpose, the strong localization property in Theorem~\ref{thm: main red system} is crucial. The authors in \cite{DekeyserVanS} lack such a property and only consider a single vortex.

The strategy of reducing the problem to the evolution of a single vortex is inspired by the approach for point vortices \cite{Marchioro-Pulvirenti-book} and vortex rings \cite{Mar3}, see also references therein. However, for the lake equations \eqref{eq:lake} new difficulties arise due to the generality of both the geometry $\Omega$ and the topography $b$. For instance, no explicit Biot-Savart law is available - in contrast to $b=\mathrm{const.}$ ($2D$-Euler) and $b(z,r)=r$ (axisymmetric $3D$-Euler without swirl). Note that the aforementioned results for the respective problems build upon that explicit formula. Moreover, we consider general bounded domains and vanishing topographies that were not included in \cite{DekeyserVanS}. 

\subsection{Proof of Theorem~\ref{thm:main} assuming that Theorem~\ref{thm: main red system} is proved}\label{sec:ProofMain}

Under the assumptions of Theorem~\ref{thm:main}, let $(\veps,\weps)$ be the unique global solution to \eqref{eq:continuity} with initial data $(\veps^0,\weps^0)$ and where the velocity fields $\veps, \veps^0$ are uniquely determined by $\weps, \weps^0$ respectively through the div-curl problem \eqref{eq:divcurl}. This solution is given by Proposition~\ref{prop:well-posed} and satisfies Proposition~\ref{prop:transport}. In particular, we define $\omega_{\varepsilon,i}$ through the characteristics by Remark~\ref{rem:omega_i}, which is a solution of
\begin{equation}\label{eq:transport red}
\partial_t(b \omega_{\eps,i})+\frac{1}{\lne}\diver(b (v_{\eps,i}+F_{\eps,i}) \omega_{\eps,i})=0, \quad \omega_{\eps,i}(0)=\omega_{\eps,i}^0,
\end{equation}
where $bv_{\eps,i}=\nabla^\perp \Psi_{\varepsilon,i}+ \sum_{k=1}^{N_{is}} \Big(\Gamma_{\varepsilon}^k + \int_{\Omega}b\omega_{\varepsilon,i}\varphi^k \Big)\nabla^\perp \psi^k$ is uniquely determined by $\omega_{\eps,i}$ through \eqref{eq:BS}, and 
\begin{equation}\label{eq:Feps}
 F_{\eps,i}:=\frac1{b}\sum_{j\neq i}\Bigg( \nabla^\perp \Psi_{\varepsilon,j}+ \sum_{k=1}^{N_{is}} \Big( \int_{\Omega}b\omega_{\varepsilon,j}\varphi^k \Big)\nabla^\perp \psi^k \Bigg).
\end{equation}
In the previous definition $\Psi_{\varepsilon,j}$ is recovered from $\omega_{\eps,j}$ through \eqref{eq:BS}.

The only point to check in order to use Theorem~\ref{thm: main red system} is that $F_{\eps,i}$ satisfies Assumption~\ref{ass: F}, in particular {\it (ii)}, because {\it (i)} is already verified by the definition of $F_{\eps,i}$. For this purpose, we use the neighborhoods $\cal C_{\rho_{b},i}$ of $\mathcal{C}_i$ introduced in Remark~\ref{rem:r0}, with a distance $r_{0}$ separating to $\cal C_{\rho_{b},j}$ and $\partial\Omega$.

For $T>0$ be fixed, we set
\begin{multline}\label{eq:Teps main}
 T_{\eps}:=\sup\Big\{ t\in[0,T] \, \, : \, \supp \omega_{\eps,i}(s,\cdot) \subset \cal C_{\rho_{b},i} \\ \text{for all} \, s\in[0,t], \, i\in \{1,\dots,{N_v} \}\Big\},
\end{multline}
which implies that the vortex blobs are separated on $[0,T_{\varepsilon}]$. Note that by assumption on $\omega_{\eps,i}^0$, $T_\eps$ exists for $\eps$ sufficiently small depending only on $\rho_b$ fixed by Remark~\ref{rem:r0}. 

First, we show that $F_{\eps,i}$ defined in \eqref{eq:Feps} satisfies Assumption~\ref{ass: F}.\begin{lemma}\label{lem: bound F}
Under the assumptions of Theorem~\ref{thm:main}, for all $i\in \{1,\dots {N_v}\}$, $\cal C_{\rho_b,i}$ and $T_{\eps}$, as in Remark~\ref{rem:r0} and \eqref{eq:Teps main} respectively, there exists $C_{\Omega,b,r_0}>0$ independent of $\eps$ (depending only on $\Omega, b$ and $r_0$) such that
\begin{equation*}
|\nabla \Psi_{\varepsilon,j}(t,x)|\leq C_{\Omega,b,r_0} |\gamma_{\varepsilon,j}|, \quad |\nabla \Psi_{\varepsilon,j}(t,x)-\nabla \Psi_{\varepsilon,j}(t,y)|\leq C_{\Omega,b,r_0}|\gamma_{\varepsilon,j}| |x-y|,
\end{equation*}
for all $(t,x,y)\in [0,T_\eps]\times \cal C_{\rho_{b},i}\times \cal C_{\rho_{b},i}$ and $j\neq i$.
\end{lemma}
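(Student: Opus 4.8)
The plan is to use the representation $\Psi_{\varepsilon,j}(t,x)=\int_\Omega G_{\Omega,b}(x,y)(b\omega_{\varepsilon,j})(t,y)\,\dd y$ from \eqref{eq:BS} together with the fact that, on the interval $[0,T_\eps]$, the region carrying $x$ and the region carrying the support of $\omega_{\eps,j}$ are well separated. Indeed, by the definition of $T_\eps$ in \eqref{eq:Teps main}, for $t\le T_\eps$ one has $\supp\omega_{\eps,j}(t,\cdot)\subset\cal C_{\rho_b,j}$, while $x$ ranges over $\cal C_{\rho_b,i}$; by Remark~\ref{rem:r0} the sets $\cal C_{\rho_b,i}$, $\cal C_{\rho_b,j}$ and $\partial\Omega$ are mutually at distance $>r_0$, so that $|x-y|>r_0$ and both points lie in $\Omega_{r_0}$. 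Thus the kernel is always evaluated away from its diagonal singularity.

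First I would establish the gradient bound. Differentiating under the integral sign (legitimate since $x$ stays at distance $>r_0$ from the support) and using the decomposition \eqref{eq.Gdecomp}, I would bound $|\nabla_x G_{\Omega,b}(x,y)|$ uniformly for $x\in\cal C_{\rho_b,i}$ and $y\in\cal C_{\rho_b,j}$: the term $\sqrt{b(x)b(y)}\tfrac{x-y}{|x-y|^2}$ is controlled by $1/r_0$ and the boundedness of $b$ on $\Omega_{r_0}$, the logarithmic term by $\nabla\sqrt b$ (finite on $\Omega_{r_0}$ since there $b=c\varphi^\alpha\ge\mathrm{const}>0$ with $c,\varphi\in C^3$) together with $|\ln|x-y||\le\max(|\ln r_0|,\ln\operatorname{diam}\Omega)$, and the remainder by $|\nabla_x R_{\Omega,b}|\le C_{r_0}$ from \eqref{est:R}. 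Integrating against $b|\omega_{\eps,j}|$ gives $|\nabla\Psi_{\eps,j}(t,x)|\le C_{\Omega,b,r_0}\int_\Omega b|\omega_{\eps,j}|\,\dd y$. Finally, since $\omega_{\eps,j}^0$ has a fixed sign $\delta_j$ by Assumption~\ref{ass: initial data} and $\omega_{\eps,j}(t,\cdot)$ is the transport of $\omega_{\eps,j}^0$ along the characteristics (Proposition~\ref{prop:transport}(iii)), the sign is preserved; combined with conservation of the weighted mass (Proposition~\ref{prop:transport}(ii)), this yields $\int_\Omega b|\omega_{\eps,j}|=\big|\int_\Omega b\omega_{\eps,j}\big|=|\gamma_{\eps,j}|$, which is the first estimate.

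For the Lipschitz estimate, the cleanest route I would take is to exploit the elliptic structure rather than differentiating the kernel twice. On a fixed open neighborhood $U$ with $\cal C_{\rho_b,i}\subset U\Subset\Omega\setminus\cal C_{\rho_b,j}$, hence at distance $>r_0/2$ from $\supp\omega_{\eps,j}$, the function $\Psi_{\eps,j}$ solves the homogeneous equation $\diver(b^{-1}\nabla\Psi_{\eps,j})=b\omega_{\eps,j}=0$ from \eqref{eq:LapDir}, with $b^{-1}$ smooth and non-degenerate on $U$. The same kernel bound as above, now applied to $G_{\Omega,b}$ itself via \eqref{eq.Gdecomp}, gives $\|\Psi_{\eps,j}\|_{L^\infty(U)}\le C_{\Omega,b,r_0}|\gamma_{\eps,j}|$, and interior elliptic regularity for this non-degenerate operator then yields $\|D^2\Psi_{\eps,j}\|_{L^\infty(\cal C_{\rho_b,i})}\le C_{\Omega,b,r_0}|\gamma_{\eps,j}|$. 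The desired Lipschitz bound for $\nabla\Psi_{\eps,j}$ on $\cal C_{\rho_b,i}$ then follows from the mean value inequality \eqref{MVT} applied componentwise on a quasiconvex set containing $\cal C_{\rho_b,i}$. Equivalently, one may differentiate the kernel twice, the only nontrivial point being a bound on $\nabla_x^2R_{\Omega,b}$, which holds because $S_{\Omega,b}(\cdot,y)$ solves \eqref{eq:Sb} with right-hand side smooth for $x$ away from $y$, so one further interior elliptic estimate upgrades \eqref{est:R} to a second-derivative bound uniform in $y\in\cal C_{\rho_b,j}$.

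I expect the main obstacle to be precisely this second-order control required for the Lipschitz part: the stated remainder bounds \eqref{est:R} give only first derivatives, so one must genuinely use the separation of $\supp\omega_{\eps,j}$ from $\cal C_{\rho_b,i}$, which is guaranteed only up to $T_\eps$, in order to run the additional interior elliptic regularity argument. The remaining ingredients, namely the gradient bound and the reduction $\int_\Omega b|\omega_{\eps,j}|=|\gamma_{\eps,j}|$, are direct consequences of the kernel decomposition \eqref{eq.Gdecomp}–\eqref{est:R} and of the transport and sign structure from Proposition~\ref{prop:transport} and Assumption~\ref{ass: initial data}.
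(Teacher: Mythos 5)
Your proposal is correct. Its first half is essentially the paper's own argument: the paper likewise expands $\nabla \Psi_{\varepsilon,j}$ through \eqref{eq.Gdecomp} into three terms (the singular kernel $K$, the logarithmic term carrying $\nabla b/b$, and $\nabla_x R_{\Omega,b}$), bounds each one using $|x-y|\geq r_0$, the non-degeneracy of $b$ on $\Omega_{r_0}$ and \eqref{est:R}, and tacitly uses exactly your reduction $\int_\Omega b|\omega_{\eps,j}|\,\dd y=|\gamma_{\eps,j}|$ via sign preservation and mass conservation (which you make explicit, a small improvement). Where you genuinely diverge is the Lipschitz estimate. The paper differentiates the kernel a second time: for the two explicit terms it uses that $x\mapsto \ln|x-y|$ lies in $C^2(\cal C_{\rho_b,i})$ with norm depending only on $r_0$, uniformly in $y\in \cal C_{\rho_b,j}$, and for the remainder it proves $\|\nabla_x^2 R_{\Omega,b}\|_{L^\infty(\cal C_{\rho_b,i}\times \cal C_{\rho_b,j})}\leq C_{\Omega,b,r_0}$ by applying interior elliptic estimates (a $W^{3,4}$ bound plus Sobolev embedding) to the two-point function $S_{\Omega,b}(\cdot,y)$ solving \eqref{eq:Sb}, uniformly in $y$ — i.e., precisely the route you sketch as your ``equivalently'' variant. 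Your preferred route instead applies ellipticity once to $\Psi_{\eps,j}$ itself: since $\supp\omega_{\eps,j}(t,\cdot)\subset \cal C_{\rho_b,j}$ for $t\leq T_\eps$, the stream function solves the homogeneous, non-degenerate equation $\diver(b^{-1}\nabla \Psi_{\eps,j})=0$ on a neighborhood $U$ of $\cal C_{\rho_b,i}$, and an interior Schauder (or Calder\'on--Zygmund) estimate converts the bound $\|\Psi_{\eps,j}\|_{L^\infty(U)}\leq C|\gamma_{\eps,j}|$, obtained from the off-diagonal kernel bound on $G_{\Omega,b}$, into a $D^2$ bound on $\cal C_{\rho_b,i}$. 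This buys a shorter argument that avoids uniform-in-$y$ second-derivative estimates on the Green remainder altogether, while the paper's kernel-level approach yields the stronger, $\omega_{\eps,j}$-independent bound on $\nabla_x^2 R_{\Omega,b}$ and stays closer to the machinery already set up for \eqref{est:R}. Both routes share the same final, and in the paper equally tacit, step of passing from a $D^2$ bound to a Lipschitz bound on the not-necessarily-convex set $\cal C_{\rho_b,i}$ via a quasiconvexity argument of the type \eqref{MVT}, so your treatment of that point is at the same level of rigor as the original.
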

By \eqref{est:phipsi}, this lemma directly implies that the vector field $F_{\eps,i}$ as defined in \eqref{eq:Feps} satisfies
\begin{equation*}
 |F_{\eps,i}(t,x)|\leq C_F, \quad \left|F_{\eps,i}(t,x)-F_{\eps,i}(t,y)\right|\leq L_{F} |x-y|
\end{equation*}
for all $(t,x,y)\in [0,T_{\varepsilon}]\times \cal C_{\rho_{b},i}\times \cal C_{\rho_{b},i}$, where $C_{F}$ and $L_{F}$ depends only on $\Omega$, $b$, $M_{0}$, $(z_i^0)_{i=1,\dots, {N_v}}$, $(\gamma_i)_{i=1,\dots, {N_v}}$ and $(\Gamma_i)_{i=1,\dots, {N_{is}}}$, and therefore it satisfies Assumption~\ref{ass: F} on $[0,T_{\varepsilon}]$.

\begin{proof}[Proof of Lemma~\ref{lem: bound F}]
We recall from \eqref{eq:BS} and \eqref{eq.Gdecomp} that
\begin{align*}
\nabla \Psi_{\varepsilon,j}(t,x)
=&\nabla \int_{\Omega}\left(\frac1{2\pi}\sqrt{b(x)b(y)} \ln |x-y| + R_{\Omega,b}(x,y)\right)(b\omega_{\eps,j})(y)\, \dd y\\
=&\frac1{2\pi}\int_{\Omega}\frac{x-y}{|x-y|^2}\sqrt{b(x)b(y)}(b\omega_{\eps,j})(y)\, \dd y\\
&+\frac{\nabla b(x)}{4\pi b(x)}\int_{\Omega}\ln|x-y|\sqrt{b(x)b(y)}(b\omega_{\eps,j})(y)\, \dd y\\
&+\int_{\Omega}\nabla_x R_{\Omega,b}(x,y)(b\omega_{\eps,j})(y)\, \dd y=:I_1+I_2+I_3.\\
\end{align*}
 Note by Assumption~\ref{assum:lake} that $b\in W^{1,\infty}(\cal C_{\rho_{b},i}\cup \cal C_{\rho_{b},j} )$, and by Remark~\ref{rem:r0} that $|x-y|\geq r_{0}$ for all $(x,y)\in \cal C_{\rho_{b},i}\times \cal C_{\rho_{b},j}$ which allows us to estimate $I_{1}$:
\begin{equation*}
 |I_{1}|\leq \frac{C_\Omega\| b\|_{L^{\infty}(\Omega)}}{r_0 }\int_{\Omega} |b\omega_{\eps,j}(y)|\,\dd y\leq C_{\Omega,b,r_{0}}|\gamma_{\eps,j}|.
\end{equation*}
The second contribution is controlled in the same way:
\begin{align*}
 |I_2| \leq C_{\Omega,b,r_{0}}|\gamma_{\eps,j}|,
\end{align*}
upon using $\inf_{\Omega_{r_0}} b>0$ with $\Omega_{r_0}$ as defined in \eqref{eq:Omega delta}.
The third contribution is bounded by
\begin{equation*}
 |I_3|\leq C \|\nabla_x R_{\Omega,b}\|_{L^{\infty}(\cal C_{\rho_{b},i}\times \cal C_{\rho_{b},j})}\gamma_{\eps,j}\leq C|\gamma_{\eps,j}|,
\end{equation*}
where we used \eqref{est:R}. These three estimates give the desired bound for $\nabla \Psi_{\varepsilon,j}$.

Next, we prove the Lipschitz property. By the same arguments and the $C^2$-regularity of $b$, the Lipschitz regularity is obvious for $I_{1}$ and $I_{2}$ because for all $y\in\cal C_{\rho_{b},j}$, the function $x\mapsto \ln |x-y|$ belongs to $C^2(\cal C_{\rho_{b},i})$, with its $W^{2,\infty}$ norm bounded by a constant depending only on $r_{0}$, hence independent of $y\in\cal C_{\rho_{b},j}$. This comes from the estimates by below of $|x-y|$. To finish this proof, we only need to prove the Lipschitz regularity for $I_{3}$ which will be the consequence of the following 
\begin{equation*}
 \|\nabla_x^2R_{\Omega,b}\|_{L^{\infty}(\cal C_{\rho_{b},i}\times \cal C_{\rho_{b},j})}\leq C_{\Omega,b,r_{0}}.
\end{equation*}
As the term depending on $G_{\Omega}(\cdot,y)-\frac1{2\pi}\ln |\cdot-y|$ in the definition of $R_{\Omega,b}$ is harmonic, see the argument following \eqref{est:R}, it remains only to prove the previous inequality for $S_{\Omega,b}$. We fix $y\in \cal C_{\rho_{b},j}$ and we recall that $x\mapsto S_{\Omega,b}(x,y)$ satisfies the elliptic problem \eqref{eq:Sb}, where the right hand side term is not singular when $(x,y)\in \cal C_{\rho_{b},i}\times \cal C_{\rho_{b},j}$. Hence, we adapt the argument used to derive \eqref{est:R}: as $b(x)\geq C>0$ on $\Omega_{r_{0}/2}$, the elliptic problem $\diver(b^{-1} \nabla\cdot )$ is non singular, and standard elliptic estimates give 
\begin{align*}
 \| S_{\Omega,b}(\cdot,y) \|_{W^{3,4}(\cal C_{\rho_{b},i})} &\leq C_{r_{0},p} \Big( \|G_{\Omega}(\cdot,y)\|_{W^{1,4}(\cal C_{\rho_{b},i}+B(0,\frac{r_{0}}2))} +\| S_{\Omega,b}(\cdot,y) \|_{L^{2}(\Omega)} \Big)\\
 & \leq C_{r_{0}},
\end{align*}
independently of $y\in \cal C_{\rho_{b},j}$. By the Sobolev embedding $W^{1,4}\hookrightarrow L^\infty$, this ends the proof.
\end{proof}

We are now in position to prove Theorem~\ref{thm:main}.
\begin{proof}[Proof of Theorem~\ref{thm:main}]
Let $T>0$ be fixed. In view of Assumption~\ref{ass: initial data} and the regularity of $(\veps,\weps)$, we deduce that $T_{\eps}>0$, by the definition of $\cal C_{\rho_{b},i}$ in Remark~\ref{rem:r0} and the definition of $T_{\eps}$ in \eqref{eq:Teps main}. As observed just above, we infer from Lemma~\ref{lem: bound F} that $F_{\eps,i}$ defined in \eqref{eq:Feps} satisfies Assumption~\ref{ass: F} for all $t\in [0,T_{\eps})$ and $i\in\{1,\dots,{N_v}\}$.
Therefore, we may apply Theorem~\ref{thm: main red system} to $\omega_{\eps,i}^0$, $z_i^0\in \mathcal{C}_i$, $\gamma_{\eps,i}$ and $F_{\eps,i}$. We conclude first that the $i$-th vortex blob is strongly localized in the direction of steepest ascent, namely for every $k\in (0,1/4)$, there exist $\eps_{k,T}= \min_{i}\eps_{k,T,i}>0$ and $C_{k,T}=\max_{i}C_{k,T,i}>0$ depending only on $(z_i^0)_{i=1,\dots, {N_v}}$, $(\gamma_i)_{i=1,\dots, {N_v}}$, $(\Gamma_i)_{i=1,\dots, {N_{is}}}$, $M_{0}$, $b$, $\Omega$, $k$ and $T$ such that, for any $\varepsilon\in (0,\varepsilon_{k,T}]$, 
\begin{equation*}
 \supp \omega_{\varepsilon,i}(t,\cdot) \subset \left\{ x \, : \, |b(x)-b(z_i^0)| \leq \frac{C_{k,T}}{|\ln \varepsilon|^k}\right\}
 \end{equation*}
for all $t\in[0,T_{\eps}]$ and $i\in \{1,\dots,{N_v}\}$. Up to choosing, $\varepsilon_{\frac18,T}$ smaller if necessary, we can assume that $\frac{C_{\frac18,T}}{|\ln \varepsilon_{\frac18,T}|^{\frac18}}\leq \rho_{b}/2$.
 As $\omega_{\varepsilon,i}(t,\cdot)$ is constant along continuous curves from $B(z^0,\varepsilon)\subset \cal C_{\rho_{b}/2}$, see Remark~\ref{rem:r0}, we infer that $\supp \omega_{\varepsilon,i}(t,\cdot)\subset \cal C_{\rho_{b}/2}$. By definition of $T_{\varepsilon}$ \eqref{eq:Teps main}, we obtain that $T_{\varepsilon}=T$ for every $\varepsilon\in (0, \eps_{\frac18,T}]$.

The point {\it (i)} of Theorem~\ref{thm:main} is proved by the weak localization and the limit of trajectories, when we consider $T_{\varepsilon}=T$ for every $\varepsilon\in (0, \eps_{\frac18,T}]$.

In order to show item {\it (ii)} , namely the strong localization on $[0,T]$ for all $k\in (0,1/4)$, it suffices to replace $\eps_{k,T}$ by $\min(\eps_{k,T},\eps_{\frac18,T})$, which completes the proof of Theorem~\ref{thm:main}.
\end{proof}

The remaining part of this paper is dedicated to the proof of Theorem~\ref{thm: main red system}. 
 More precisely, Section~\ref{sec:BS} details the Biot-Savart type decomposition of $\veps$ and provides first estimates, while the energy estimates are proven in Section~\ref{sec:energy}. Section~\ref{sec:momemtum} is devoted to bound the moment of inertia and to establish the weak localization property of Theorem~\ref{thm: main red system} for short time. In Section~\ref{sec:transverse} we prove the strong localization property in the transverse direction, namely {\it(i)} of Theorem~\ref{thm: main red system}, which will imply that the weak localization holds on $[0,T]$. The asymptotic trajectory is derived in Section~\ref{sec:trajectory}.

\begin{remark}
Let us notice that, up to changing the sign of $\omega_{\varepsilon}$, $v_{\varepsilon}$ and $t$, it is enough to prove Theorem~\ref{thm: main red system} for non-negative vorticities, namely $\delta=1$ in Assumption~\ref{data red}. Hence, we will always use in the sequel $|\omega_{\varepsilon}|=\omega_{\varepsilon}$ for every $t,x$ and $\varepsilon$.
\end{remark}

\section{Decomposition of the Biot-Savart law}\label{sec:BS}
The aim of this section is to introduce a suitable Hodge decomposition for the velocity field which the subsequent sections build upon. 

Throughout this and the subsequent sections, we work under the hypothesis of Theorem~\ref{thm: main red system}. For the convenience of the reader, we summarize the setting of the solutions to \eqref{eq:lake} under consideration.

\begin{assumption}\label{ass: main part}
The lake $(\Omega,b)$ satisfies Assumption~\ref{assum:lake}, the initial data $\weps^0$ are sharply concentrated in the sense of Assumption~\ref{data red} (for $\delta=1$) and $F_{\eps}$ satisfies Assumption~\ref{ass: F}. Let $T>0$, we denote by $(\omega_\eps, v_\eps)$ the unique corresponding weak solution of \eqref{eq:transport-1} in the sense of Proposition~\ref{prop:well-posed} on $[0,T]$, where $v$ in (ii) has to be replaced by $\frac1{\lne}(v_{\varepsilon}+F_{\varepsilon})$. This solution verifies Proposition~\ref{prop:transport} with $\omega=\omega_{\varepsilon}$ and $v=\frac1{\lne}(v_{\varepsilon}+F_{\varepsilon})$, but where we note that $K_{T}$ in the first item depends on $\varepsilon$.

We consider the neighborhood $\cal C_{\rho_b}$ of $\cal C$ as defined in Remark~\ref{rem:r0}.
Analogously to \eqref{eq:Teps main}, we define the time
\begin{equation}\label{def:maxi-T}
 T_{\eps}=\sup\Big\{ t\in[0,T] \, \, : \, \supp \omega_{\eps}(s,\cdot) \subset \cal C_{\rho_{b}} \ \text{for all } \, s\in[0,t] \Big\}.
\end{equation}
\end{assumption}

Note that in view of Remark~\ref{rem:r0} one has $\dist(\cal C_{\rho_{b}},\partial\Omega)> r_0$ hence $ \cal C_{\rho_{b}} \subset \Omega_{r_{0}}$, where $\Omega_{r_{0}}$ is defined in \eqref{eq:Omega delta}.

First, we introduce a suitable decomposition of the velocity field $\veps$ in terms of stream functions. We recall that $v_{\varepsilon}$ is the vector field given by the Biot-Savart type law \eqref{BS:is} and \eqref{eq:BS}. The expansion of the Green kernel \eqref{eq.Gdecomp} allows one to decompose the velocity field $\ueps$ as
\begin{equation}\label{eq:decomposition}
 \ueps=v_{\varepsilon,K}+v_{\varepsilon,L}+v_{\varepsilon,R},
\end{equation}
where we define
\begin{itemize}
 \item the most singular term as the 2D Biot-Savart law:
 \begin{equation}\label{eq:uK}
\begin{aligned}
 v_{\varepsilon,K}(x)&=\frac{1}{2\pi b(x)}\int_{\Omega}\nabla_x^{\perp}(\ln|x-y|)\sqrt{b(x)b(y)}(b\weps)(y) \,\dd y\\
 &=\frac{1}{2\pi b(x)} \int_{\Omega}K(x,y)\sqrt{b(x)b(y)}(b\weps)(y) \,\dd y,
 \end{aligned}
\end{equation}
where we have denoted
\[
K(x,y):=\frac{(x-y)^{\perp}}{|x-y|^2}.
\]
Note that while being singular $v_{\varepsilon,K}$ has a symmetric structure and will give the standard spinning around the point vortex which will not contribute to the displacement of the vortex core;
\item the intermediate vector field which will account for the main dynamics:
\begin{equation}\label{eq:uL}
\begin{aligned}
 & v_{\varepsilon,L} (x) =\frac{\nabla^\perp b(x)}{4\pi b^2(x)} \psi_{\varepsilon}(x) \\
 &\psi_{\varepsilon}(x) =\int_{\Omega}\ln|x-y| \sqrt{b(x)b(y)} (b\weps)(y)\,\dd y;
 \end{aligned}
\end{equation}
\item and the remainder term
\begin{equation}\label{eq:uR}
\begin{aligned}
 v_{\varepsilon,R}(x)=&\frac{1}{b(x)}\int_{\Omega} \nabla_x^\perp R_{\Omega,b}(x,y) (b\omega_{\varepsilon})(y)\,\dd y \\
 &+ \frac{1}{b(x)} \sum_{k=1}^{N_{is}} \Big(\Gamma_{\varepsilon}^k + \int_{\Omega}b\omega_{\varepsilon}\varphi^k \Big)\nabla^\perp \psi^k.
 \end{aligned}
\end{equation}
\end{itemize}

Note that in view of \eqref{est:phipsi} and \eqref{est:R}, it follows that for all $t\in[0,T_{\eps})$ it holds 
\begin{equation}\label{bound:vR}
\|v_{\varepsilon,R}\|_{L^\infty(\cal C_{\rho_{b}})}\leq C_{r_{0}} \int_{\Omega} (b\omega_\eps)(t,y)\,\dd y +C_{r_{0}}|\Gamma_{\varepsilon}|\leq C_{r_{0}} (\gamma + |\Gamma|) .
\end{equation}

A first consequence of the decomposition \eqref{eq:decomposition} is the following bound.

\begin{lemma}\label{lemma:J} Under Assumption~\ref{ass: main part}, for $k\geq 1$, let 
$$\mathcal{J}_k(t)=\int_{\Omega}b(x)^k (b\weps)(t,x)\,\dd x.$$
Then there exists $C_{k}>0$ independent of $\varepsilon$ such that 
\begin{equation*}
|\mathcal{J}_k(t)-\gamma_{\varepsilon} b(z^0)^k|\leq\frac{C_k}{\lne} \quad \forall t\in [0,T_{\varepsilon}).
\end{equation*}
\end{lemma}
\begin{proof}
We use the weak formulation of \eqref{eq:transport-1} (see Proposition~\ref{prop:well-posed}) with test function $b(x)^k \chi(x)$ where $\chi\in C^\infty_{c}(\Omega)$ is a smooth cutoff function such that $\chi\equiv 1$ on $\cal C_{\rho_{b}}$, so that $b^k \chi$ is $C^1$ by definition of $\cal C_{\rho_{b}}$ and $T_{\varepsilon}$.
We have for a.e. $t\in [0,T_\eps]$:
\begin{align*}
\frac{\dd}{\dd t}\mathcal{J}_k(t) &=\frac{\dd}{\dd t}\int_{\Omega}b(x)^k \chi(x) (b\weps)(t,x)\,\dd x \\
&=\frac1\lne \int_{\Omega} \nabla\Big(b(x)^k \chi(x) \Big)\cdot (\veps+F_{\varepsilon})(t,x)(b\weps)(t,x)\,\dd x\\
&=\frac1\lne \int_{\Omega} \nabla\Big(b(x)^k \Big)\cdot (\veps+F_{\varepsilon})(t,x)(b\weps)(t,x)\,\dd x.
 \end{align*}
 This use of \eqref{eq:transport-1} will be systematic in the sequel, and we will not mention anymore the use of $\chi$ and that the equalities hold true almost everywhere.

The decomposition \eqref{eq:decomposition} together with the estimates of $v_{\varepsilon,R}$ \eqref{bound:vR} and of $F_\eps$, see Assumption~\ref{ass: F}, allow us to compute
\begin{align*}
\frac{\dd}{\dd t}\mathcal{J}_k(t)
=& \int_{\Omega} \frac{v_{\eps,K}(t,x)\cdot\nabla b^k(x)}{\lne} (b\weps)(t,x)\,\dd x +\mathcal{O}\left(\frac{1}{\lne}\right)\\
=&\frac1{2\pi \lne}\iint_{\Omega^2} K(x,y) \cdot \Big( \frac{\nabla b^k(x)}{b(x)}\Big)\sqrt{b(x)b(y)}(b\weps)(x)(b\weps)(y)\,\dd x\dd y\\
&+ \mathcal{O}\left(\frac{1}{\lne}\right)\\
=&\frac1{4\pi \lne}\iint_{\Omega^2} K(x,y) \cdot \Big( \frac{\nabla b^k(x)}{b(x)}-\frac{\nabla b^k(y)}{b(y)}\Big)\\
&\quad\times\sqrt{b(x)b(y)}(b\weps)(x)(b\weps)(y)\,\dd x\dd y + \mathcal{O}\left(\frac{1}{\lne}\right)
\end{align*}
by symmetrizing with respect to $x$ and $y$. From the application of the mean-value theorem \eqref{MVT} to $\frac{\nabla b^k}b$ on $\Omega_{r_{0}}$ we get
\begin{equation*}
\Big|\frac{\nabla b^k(x)}{b(x)}-\frac{\nabla b^k(y)}{b(y)}\Big|\leq \widetilde{C}_{r_{0}} |x-y|.
\end{equation*}
We deduce that
\[
\left|\frac{\dd}{\dd t}\mathcal{J}_k(t)\right|\leq \frac{C}{\lne},
\]
which implies that
\[
|\mathcal{J}_k(t)-\gamma_{\varepsilon} b(z^0)^k|\leq |\mathcal{J}_k(0)-\gamma_{\varepsilon} b(z^0)^k| + \frac{CT}{\lne} \leq C\eps+ \frac{C}{\lne},
\]
by virtue of Assumption~\ref{data red} on the initial data $\weps^0$.
\end{proof}

\begin{corollary}\label{coro:mai}
Under Assumption~\ref{ass: main part} we have
\begin{equation*}
\int_{\Omega} \omega_\eps(t,x)\,\dd x = \frac{\gamma_\eps}{b(z^0)}+\mathcal{O}\left(\frac{1}{\lne}\right),\quad \quad \forall t\in [0,T_{\varepsilon}).
\end{equation*}
\end{corollary}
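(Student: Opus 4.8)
The plan is to read off Corollary~\ref{coro:mai} from Lemma~\ref{lemma:J} by exploiting a second-order cancellation rather than comparing $\int_\Omega \omega_\eps$ to $\gamma_\eps/b(z^0)$ directly. The naive identity
\[
\int_\Omega \weps - \frac{\gamma_\eps}{b(z^0)} = \int_\Omega\Big(\frac1{b(x)}-\frac1{b(z^0)}\Big)(b\weps)(x)\,\dd x,
\]
which uses mass conservation $\int_\Omega b\weps=\gamma_\eps$ (i.e.\ $\mathcal{J}_0=\gamma_\eps$, valid on $[0,T_\eps)$ since the advecting flux $b(v_\eps+F_\eps)$ is divergence free by the div-curl problem and Assumption~\ref{ass: F}\,(i)), does \emph{not} suffice by itself: the weight $1/b(x)-1/b(z^0)$ is only of order one on $\cal C_{\rho_b}$, not $\mathcal{O}(\lne^{-1})$. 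So I would instead control the weighted second moment of $b$ about the value $b(z^0)$.

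The key step is to expand the nonnegative quantity
\[
\mathcal{K}(t):=\int_\Omega (b(x)-b(z^0))^2\,(b\weps)(t,x)\,\dd x = \mathcal{J}_2(t)-2b(z^0)\mathcal{J}_1(t)+b(z^0)^2\mathcal{J}_0(t),
\]
recalling $\mathcal{J}_k=\int_\Omega b^{k+1}\weps$. Substituting $\mathcal{J}_0=\gamma_\eps$ together with $\mathcal{J}_1=\gamma_\eps b(z^0)+\mathcal{O}(\lne^{-1})$ and $\mathcal{J}_2=\gamma_\eps b(z^0)^2+\mathcal{O}(\lne^{-1})$ from Lemma~\ref{lemma:J}, the three leading contributions telescope as $(1-2+1)\gamma_\eps b(z^0)^2=0$, leaving $\mathcal{K}(t)=\mathcal{O}(\lne^{-1})$ uniformly on $[0,T_\eps)$. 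Since $\supp\weps(t,\cdot)\subset\cal C_{\rho_b}\subset\Omega_{r_0}$, where $b\geq c_{r_0}>0$, and since $\weps\geq0$ after the sign normalization of Assumption~\ref{ass: main part}, this upgrades to a bound on the \emph{unweighted} second moment,
\[
\int_\Omega (b(x)-b(z^0))^2\,\weps(t,x)\,\dd x\leq \frac1{c_{r_0}}\mathcal{K}(t)=\mathcal{O}\Big(\frac1{\lne}\Big).
\]

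To conclude I would expand this same unweighted moment differently, writing
\[
\int_\Omega (b-b(z^0))^2\weps = \mathcal{J}_1-2b(z^0)\gamma_\eps+b(z^0)^2\int_\Omega\weps,
\]
using $\int_\Omega b^2\weps=\mathcal{J}_1$ and $\int_\Omega b\weps=\gamma_\eps$. Inserting the bound just obtained and $\mathcal{J}_1=\gamma_\eps b(z^0)+\mathcal{O}(\lne^{-1})$ gives $b(z^0)^2\int_\Omega\weps=\gamma_\eps b(z^0)+\mathcal{O}(\lne^{-1})$, and dividing by the positive constant $b(z^0)^2$ yields the assertion, with the $\mathcal{O}(\lne^{-1})$ constant uniform in $t\in[0,T_\eps)$ because only $k=1,2$ enter and $\gamma_\eps\to\gamma$ is bounded. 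The one genuine point of the argument, which I expect to be the crux rather than a routine calculation, is the exact cancellation of the order-one terms in $\mathcal{K}$: it is precisely this cancellation that converts the $b$-concentration of $b\weps$ recorded in Lemma~\ref{lemma:J} into an $\mathcal{O}(\lne^{-1})$ control of a genuine second moment; the remaining ingredients ($\weps\geq0$ and $b$ bounded below on $\cal C_{\rho_b}$) are immediate from Assumption~\ref{ass: main part} and Remark~\ref{rem:r0}.
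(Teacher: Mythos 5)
Your proof is correct and is essentially the paper's own argument: both rest on mass conservation, Lemma~\ref{lemma:J} for $k=1,2$, the cancellation $(1-2+1)\gamma_\eps b(z^0)^2=0$ in the weighted second moment $\int_\Omega(b-b(z^0))^2(b\weps)\,\dd x$, and the lower bound $b\geq c_{r_0}>0$ on $\supp\weps(t,\cdot)\subset\cal C_{\rho_b}$. The only difference is cosmetic — you solve for $b(z^0)^2\int_\Omega\weps$ from the unweighted second moment, whereas the paper decomposes $\frac1{b(x)}-\frac1{b(z^0)}$ into exact quadratic and linear parts — but the resulting identity and estimates are the same.
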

\begin{proof}
We expand the mass as
\begin{align*}
\int_{\Omega} \omega_\eps(t,x)\,\dd x =& \int_{\Omega} (b\omega_\eps)(t,x)\left(\frac{1}{b(x)}- \frac{1}{b(z^0)}\right)\,\dd x
+\frac{\gamma_\eps}{b(z^0)}\\
=&\int_{\Omega} (b\omega_\eps)(t,x)\frac{(b(z^0)-b(x))^2}{b(x)b(z^0)^2}\, \dd x\\
&+ \frac{1}{b(z^0)^2}\int_{\Omega} (b\omega_\eps)(t,x)\left(b(z^0)-b(x)\right)\,\dd x+\frac{\gamma_\eps}{b(z^0)}.
\end{align*}
We observe that by the definition of $T_{\varepsilon}$ \eqref{def:maxi-T} we have $b(x)\geq \min_{\Omega_{r_0}} b>0$ for $x\in \text{supp}(\omega_\eps)(t,\cdot)$ therefore
\begin{align*}
\int_{\Omega} (b\omega_\eps)(t,x)\frac{(b(z^0)-b(x))^2}{b(x)b(z^0)^2}\, \dd x
&\leq C \int_{\Omega} (b\omega_\eps)(t,x)(b(z^0)-b(x))^2\, \dd x\\
&\leq C\left(b(z^0)^2 \gamma_\eps+ \mathcal{J}_2(t)-2b(z^0)\mathcal{J}_1(t)\right)\\
&\leq \frac{C}{\lne},
\end{align*}
where we have applied Lemma~\ref{lemma:J}.

Similarly,
\begin{align*}
\left|\frac{1}{b(z^0)^2}\int_{\Omega} (b\omega_\eps)(t,x)\left(b(z^0)-b(x)\right)\,\dd x\right|
&=\frac{1}{b(z^0)^2}\left| b(z^0)\gamma_\eps-\mathcal{J}_1(t)\right|\leq \frac{C}{\lne}.
\end{align*}
This concludes our proof.
\end{proof}

\section{Estimates on the energy and the stream function}\label{sec:energy}
Working again in the setting of Assumption~\ref{ass: main part}, we aim to introduce suitable bounds on the local energy defined by
\begin{align*}
E_{\varepsilon}(t)=&\int_{\Omega} b(x)| b^{-1}(x)\nabla^\perp \Psi_{\varepsilon}(t,x)|^2\,\dd x \\
=&\int_{\Omega} \nabla^\perp \Psi_{\varepsilon}(t,x) \cdot b^{-1}(x)\nabla^\perp \Psi_{\varepsilon}(t,x)\,\dd x 
= -\int_{\Omega} \Psi_{\varepsilon}(t,x) (b\omega_{\varepsilon})(t,x)\,\dd x \\
=& -\iint_{\Omega^2} G_{\Omega,b}(x,y) (b\omega_{\varepsilon})(t,x)(b\omega_{\varepsilon})(t,y)\,\dd x\dd y \\
\end{align*}
and on the stream function $\psi_{\varepsilon}$ defined in \eqref{eq:uL}.
In the absence of external field and islands, the local energy coincides with the total energy and it is therefore conserved, see the proof of Proposition~\ref{prop:energy}.

First, we relate the local energy and the stream function $\psi_{\eps}$ in the following lemma.
\begin{lemma}\label{lemma:energy-1}
Under Assumption~\ref{ass: main part}, we have for all $t\in [0,T_{\varepsilon})$
\begin{align*}
E_{\varepsilon}(t)&=-\frac1{2\pi}\int_{\Omega}\psi_\eps(t,x)(b\weps)(t,x)\,\dd x+\mathcal{O}(1).
\end{align*}
\end{lemma}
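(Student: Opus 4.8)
The plan is to start from the expression for the local energy $E_\eps(t)$ as a double integral against the Green kernel $G_{\Omega,b}$ and then substitute the decomposition \eqref{eq.Gdecomp}. Writing
$$
E_{\varepsilon}(t)= -\iint_{\Omega^2} G_{\Omega,b}(x,y) (b\omega_{\varepsilon})(t,x)(b\omega_{\varepsilon})(t,y)\,\dd x\dd y,
$$
I would insert $G_{\Omega,b}(x,y)=\frac1{2\pi}\sqrt{b(x)b(y)}\ln|x-y|+R_{\Omega,b}(x,y)$ to split $E_\eps$ into a leading logarithmic part and a remainder part. The leading part is exactly
$$
-\frac1{2\pi}\iint_{\Omega^2}\sqrt{b(x)b(y)}\ln|x-y|\,(b\omega_{\varepsilon})(t,x)(b\omega_{\varepsilon})(t,y)\,\dd x\dd y,
$$
and recognizing the inner integral against $y$ as the definition of $\psi_\eps(t,x)$ in \eqref{eq:uL}, this term is precisely $-\frac1{2\pi}\int_\Omega \psi_\eps(t,x)(b\omega_\eps)(t,x)\,\dd x$, which is the claimed main term.

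The remaining work is to show that the remainder term contributes only $\mathcal{O}(1)$. First I would note that, since $t<T_\eps$, the support of $\omega_\eps(t,\cdot)$ lies in $\cal C_{\rho_b}\subset \Omega_{r_0}$ by Assumption~\ref{ass: main part}, so both $x$ and $y$ range over $\Omega_{r_0}$ in the double integral. On this set the estimate \eqref{est:R} gives $|R_{\Omega,b}(x,y)|\leq C_{r_0}$ uniformly. Therefore
$$
\Bigl|\iint_{\Omega^2} R_{\Omega,b}(x,y)(b\omega_{\varepsilon})(t,x)(b\omega_{\varepsilon})(t,y)\,\dd x\dd y\Bigr|
\leq C_{r_0}\Bigl(\int_\Omega (b\omega_\eps)(t,x)\,\dd x\Bigr)^2 = C_{r_0}\,\gamma_\eps^2,
$$
using that $b\omega_\eps\geq 0$ (by the sign normalization in the preceding remark) and the conservation of the vorticity mass $\int_\Omega b\omega_\eps\,\dd x=\gamma_\eps$. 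Since $\gamma_\eps\to\gamma$ is bounded, this is $\mathcal{O}(1)$, which yields the lemma.

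The argument is essentially routine once the kernel splitting is in place; the only points requiring care are that the support confinement $t<T_\eps$ is what legitimizes the uniform bound \eqref{est:R} (the estimate degenerates as one approaches $\partial\Omega$ where $b$ vanishes), and that the nonnegativity of $b\omega_\eps$ together with mass conservation is what converts the double integral bound into $\gamma_\eps^2$ rather than an $L^1\!\times\! L^1$ product that would otherwise need separate control. I do not anticipate a genuine obstacle here: the real content is merely identifying the logarithmic part of the kernel with $\psi_\eps$ and discarding the bounded remainder, so this lemma serves mainly to rewrite $E_\eps$ in a form amenable to the subsequent energy estimates.
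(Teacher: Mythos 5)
Your proposal is correct and follows essentially the same route as the paper's proof: substitute the kernel decomposition \eqref{eq.Gdecomp} into the double-integral expression for $E_\eps$, recognize the logarithmic part as $-\frac{1}{2\pi}\int_\Omega \psi_\eps (b\omega_\eps)\,\dd x$, and bound the remainder term by \eqref{est:R} using the support confinement for $t<T_\eps$. The additional details you supply (mass conservation, nonnegativity of $b\omega_\eps$ from the sign normalization) are exactly what the paper leaves implicit in its one-line conclusion.
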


\begin{proof}
By definition of $R_{\Omega,b}$, see \eqref{eq.Gdecomp}, it is clear that
\begin{multline*}
E_{\varepsilon}(t)+\frac1{2\pi}\int_{\Omega}\psi_\eps(t,x)(b\weps)(t,x)\,\dd x \\= -\iint_{\Omega^2} R_{\Omega,b}(x,y) (b\omega_{\varepsilon})(t,x)(b\omega_{\varepsilon})(t,y)\,\dd x\dd y
\end{multline*}
the conclusion follows from \eqref{est:R}.
\end{proof}

We use this relation to establish a lower bound for the initial energy.
\begin{lemma}\label{lemma:energy-2}
Under Assumption~\ref{ass: main part}, it holds
\[
E_{\varepsilon}(0)\geq \frac{1}{2\pi} \gamma_{\varepsilon}^2 b(z^0)\lne+\mathcal{O}(1).
\]
\end{lemma}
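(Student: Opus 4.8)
The plan is to combine Lemma~\ref{lemma:energy-1} evaluated at $t=0$ with an explicit lower bound for the weighted self-interaction of the initial datum. By Lemma~\ref{lemma:energy-1} one has $E_\eps(0)=-\frac1{2\pi}\int_\Omega \psi_\eps(0,x)(b\weps^0)(x)\,\dd x+\mathcal{O}(1)$, so it suffices to bound the first term from below by $\frac1{2\pi}\gamma_\eps^2 b(z^0)\lne+\mathcal{O}(1)$. Inserting the definition of $\psi_\eps$ from \eqref{eq:uL}, I would rewrite this term as
\[
-\frac1{2\pi}\iint_{\Omega^2}\ln|x-y|\,\sqrt{b(x)b(y)}\,(b\weps^0)(x)(b\weps^0)(y)\,\dd x\,\dd y.
\]

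The key is to exploit the sign of each factor on the support of the initial vorticity. After the reduction to $\delta=1$ we have $b\weps^0\ge 0$, and Assumption~\ref{data red} gives $\supp(\weps^0)\subset B(z^0,M_0\eps)$, whence $|x-y|\le 2M_0\eps$ throughout the double integral. For $\eps$ small enough that $2M_0\eps<1$, the factor $-\ln|x-y|$ is therefore positive and bounded below by $-\ln(2M_0\eps)=\lne-\ln(2M_0)$. Moreover, since $B(z^0,M_0\eps)\subset\Omega_{r_0}$ for small $\eps$ (see Remark~\ref{rem:r0}), where $b$ is bounded away from zero and Lipschitz by Assumption~\ref{assum:lake}, we get $|b(x)-b(z^0)|\le C\eps$ on the support and hence $\sqrt{b(x)b(y)}\ge b(z^0)-C\eps>0$. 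Both lower bounds being positive, I would multiply them and integrate against the nonnegative kernel $(b\weps^0)(x)(b\weps^0)(y)$, using $\iint_{\Omega^2}(b\weps^0)(x)(b\weps^0)(y)\,\dd x\,\dd y=\gamma_\eps^2$, to obtain
\[
-\frac1{2\pi}\int_\Omega \psi_\eps(0,x)(b\weps^0)(x)\,\dd x\ \ge\ \frac1{2\pi}\big(\lne-\ln(2M_0)\big)\big(b(z^0)-C\eps\big)\gamma_\eps^2.
\]
Expanding the product, the leading term is $\frac1{2\pi}\gamma_\eps^2 b(z^0)\lne$, while the remaining contributions are $\mathcal{O}(1)$: indeed $\gamma_\eps\to\gamma$ is bounded, so $\gamma_\eps^2\ln(2M_0)=\mathcal{O}(1)$, and the cross term $\gamma_\eps^2\,\eps\lne=o(1)$.

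The only delicate point is the careful tracking of signs, so that replacing $-\ln|x-y|$ and $\sqrt{b(x)b(y)}$ by their respective positive lower bounds genuinely yields a lower bound for the integral; everything else is routine bookkeeping of the $\mathcal{O}(1)$ remainders together with Lemma~\ref{lemma:energy-1}. I note that the logarithmic singularity at $x=y$ only adds to the positive contribution and hence causes no difficulty for the lower bound, while integrability of $\ln$ in the plane ensures that $\psi_\eps(0,\cdot)$ and all the quantities above are finite.
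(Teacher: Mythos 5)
Your proof is correct, and it follows the same overall strategy as the paper: invoke Lemma~\ref{lemma:energy-1} at $t=0$, then exploit that $|x-y|\leq 2M_{0}\eps$ on the support of the integrand to bound the logarithmic kernel below by $\lne-\ln(2M_{0})$. Where you genuinely diverge is in the treatment of the weight $\sqrt{b(x)b(y)}$. The paper expands it around $b(z^0)$ in two steps (first replacing $\sqrt{b(x)b(y)}$ by $b(x)$, then $b(x)$ by $b(z^0)$) and must then control the resulting error terms, in particular the bound $\iint_{\Omega^2}(-\ln|x-y|)(b\weps^0)(x)(b\weps^0)(y)\,\dd x\dd y\leq C\eps^{-1}\cdot\eps\lne\cdot\gamma_\eps$, whose proof uses the pointwise bound $0\leq\weps^0\leq M_{0}/\eps^2$ from Assumption~\ref{data red}. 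You instead keep everything one-sided: on the support both factors are positive and bounded below, $-\ln|x-y|\geq\lne-\ln(2M_{0})>0$ and $\sqrt{b(x)b(y)}\geq b(z^0)-C\eps>0$, so the product of the two lower bounds is a lower bound for the product, and integrating against the nonnegative measure $(b\weps^0)(x)(b\weps^0)(y)\,\dd x\dd y$ of total mass $\gamma_\eps^2$ gives the claim at once. This is slightly more economical: it needs only the nonnegativity (after the reduction to $\delta=1$), the support condition and the convergence of $\gamma_\eps$, not the $L^\infty$ bound on $\weps^0$, and the unfavorable cross term $-C\eps\lne\,\gamma_\eps^2$ is harmless since it is $o(1)$. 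Both routes produce the same leading constant $\frac{1}{2\pi}\gamma_\eps^2 b(z^0)\lne$, so your argument is a valid, marginally simpler substitute for the paper's proof.
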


\begin{proof}
By Lemma~\ref{lemma:energy-1}, it suffices to bound
\[
-\frac1{2\pi}\int_{\Omega}\psi_\eps^0(x)(b\weps^0)(x)\,\dd x.
\]
Exploiting that the initial data $\weps^0$ is sharply concentrated, see Assumption~\ref{data red}, and the mean value theorem \eqref{MVT}, we have
\begin{equation*}
 \left|\sqrt{b(x)}-\sqrt{b(y)}\right|\leq C_{\Omega,r_0}\|\nabla b\|_{L^{\infty}(\Omega_{r_{0}/2})}|x-y|\leq C_{\Omega,b,r_0} |x-y|,
\end{equation*}
for all $x,y\in \supp(\weps^0)$, where we recall that $\supp(\weps^0)\subset\cal C_{\rho_b}\subset \Omega_{r_{0}}$ for all $\eps\in (0,\varepsilon_{0}]$, see Remark~\ref{rem:r0}.
It follows that
\begin{align*}
\int_{\Omega}\psi_\eps^0(x)(b\weps^0)(x)\,\dd x=&\iint_{\Omega^2}\ln |x-y| \sqrt{b(x)b(y)}(b\omega_\eps^0)(x)(b\omega_\eps^0)(y)\,\dd x\dd y\\
=&\iint_{\Omega^2}\ln |x-y| b(x)(b\omega_\eps^0)(x)(b\omega_\eps^0)(y)\,\dd x\dd y+\mathcal{O}(1)\\
=&\iint_{\Omega^2}\ln |x-y| (b(x)-b(z^0) )(b\omega_\eps^0)(x)(b\omega_\eps^0)(y)\,\dd x\dd y\\
&+b(z^0)\iint_{\Omega^2}\ln |x-y|(b\omega_\eps^0)(x)(b\omega_\eps^0)(y)\,\dd x\dd y+\mathcal{O}(1) .
\end{align*}
On the one hand, we use the localization assumption of $\omega^0_{\varepsilon}$ to get
\begin{align*}
\Big|\iint_{\Omega^2}&\ln |x-y| (b(x)-b(z^0) )(b\omega_\eps^0)(x)(b\omega_\eps^0)(y)\,\dd x\dd y\Big|\\
&\leq C\|\nabla b\|_{L^\infty(\Omega_{r_{0}/2})} M_{0}\varepsilon \iint_{\Omega^2} (-\ln |x-y|) (b\omega_\eps^0)(x)(b\omega_\eps^0)(y)\,\dd x\dd y \\
&\leq C\varepsilon \int_{\Omega}(b\omega_\eps^0)(y) \Big( \int_{B(0,2M_{0}\varepsilon)} (-\ln |z|) \|b\|_{L^\infty} \frac{M_{0}}{\varepsilon^2} \,\dd z\Big)\,\dd y \\
&\leq C \eps\lne.
\end{align*}

On the other hand, we use that $|x-y|\leq 2M_{0}\varepsilon$ for all $x,y\in \supp \omega_{\varepsilon}^0$ to estimate
\[
-\frac{b(z^{0})}{2\pi}\iint_{\Omega^2} \ln |x-y| (b\omega_\eps^0)(x)(b\omega_\eps^0)(y)\,\dd x\dd y \geq -\frac{b(z^{0})}{2\pi}\ln (2M_{0}\varepsilon) \gamma_{\varepsilon}^2,
\]
which completes the proof.
\end{proof}

\begin{remark}
Since the proof of Lemma~\ref{lemma:energy-2} relies on the sharp concentration of the initial data $\weps^0$, namely that $\supp(\weps^0)\subset B(z^0,M_0\eps)$, we do not claim that it extends to positive times. Nevertheless, by the slow variation property of the local energy, we will deduce that this lower bound holds for any time, see Proposition~\ref{prop:energy} below. 
\end{remark}

The purpose of the next lemma is to provide an upper bound for $\psi_\eps$ as defined in \eqref{eq:uL} which in turn will yield a precise estimate of the initial energy by virtue of Lemma~\ref{lemma:energy-1} and Lemma~\ref{lemma:energy-2}. The lemma is stated for arbitrary times as such an estimate is required below. 

\begin{lemma}\label{lem:est-psi} 
Under Assumption~\ref{ass: main part} and for $\eps_{0}$ sufficiently small, there exists $C>0$ such that
\[
-C\leq -\psi_\eps(t,x)\leq \gamma_{\varepsilon} \frac{b(x)^2}{b(z^0)}\lne +C
\]
for any $x\in \cal C_{\rho_b}$ and $t\in[0,T_\eps)$.
\end{lemma}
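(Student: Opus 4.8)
The plan is to bound $\psi_\eps(t,x)=\int_\Omega \ln|x-y|\sqrt{b(x)b(y)}(b\weps)(t,y)\,\dd y$ by splitting the logarithmic kernel into its contributions at large and short range relative to the point $x\in\cal C_{\rho_b}$. The lower bound $-\psi_\eps\geq -C$ (i.e. an upper bound $\psi_\eps\leq C$) should be the easy direction: on the support of $\weps(t,\cdot)$, which lies in $\cal C_{\rho_b}\subset\Omega_{r_0}$ by definition of $T_\eps$, the distances $|x-y|$ are bounded above by $\operatorname{diam}(\cal C_{\rho_b})$, so $\ln|x-y|$ is bounded above by a constant; multiplying by the nonnegative weight $\sqrt{b(x)b(y)}(b\weps)(t,y)$ and integrating, the total mass $\int b\weps\,\dd y=\gamma_\eps$ is uniformly bounded, giving $\psi_\eps\leq C$.

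The main content is the upper bound $-\psi_\eps(t,x)\leq \gamma_\eps\frac{b(x)^2}{b(z^0)}\lne+C$. First I would factor out $\sqrt{b(x)}$ and replace the remaining $\sqrt{b(y)}$ by $\sqrt{b(x)}$ up to a controlled error: writing $\sqrt{b(x)b(y)}=b(x)+\sqrt{b(x)}(\sqrt{b(y)}-\sqrt{b(x)})$ and using that $|\sqrt{b(y)}-\sqrt{b(x)}|\leq C|x-y|$ on $\Omega_{r_0/2}$ via the mean value inequality \eqref{MVT}, the cross term contributes $\int |x-y|\,|\ln|x-y||\,(b\weps)(t,y)\,\dd y$, which is uniformly bounded since $|x-y|$ is bounded on $\cal C_{\rho_b}$ and the integrand $|x-y|\,|\ln|x-y||$ stays bounded. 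This reduces the problem to estimating $b(x)\int_\Omega \ln|x-y|\,(b\weps)(t,y)\,\dd y$ from below, i.e. controlling $\int_\Omega(-\ln|x-y|)(b\weps)(t,y)\,\dd y$ from above.

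For that singular integral I would split the mass according to whether $|x-y|$ is larger or smaller than $\eps$ (or a comparable scale). On the region $|x-y|\geq \eps$ one has $-\ln|x-y|\leq |\ln\eps|+C$, so this part contributes at most $(\lne+C)\gamma_\eps$ after multiplying by $b(x)$; to match the stated constant $\gamma_\eps\frac{b(x)^2}{b(z^0)}$ I would absorb the factor $b(x)$ and use Lemma~\ref{lemma:J} / the mass identity to compare $b(x)\gamma_\eps$ with $\frac{b(x)^2}{b(z^0)}\gamma_\eps$, the discrepancy being $O(1)$ since on the support $|b(x)-b(z^0)|$ is small. The region $|x-y|<\eps$ is where the genuine singularity sits, and this is the step I expect to be the main obstacle: I would use the $L^\infty$ bound $0\leq\weps\leq M_0/\eps^2$ together with the rearrangement inequality (Appendix~\ref{app:rearrangement}), which states that redistributing the bounded mass into the smallest possible ball maximizes $\int(-\ln|x-y|)\weps\,\dd y$; comparing against a concentric disk of radius $O(\eps)$ carrying mass $\gamma_\eps/b(x)$ gives a bound of the form $\int_{B(0,C\eps)}(-\ln|z|)\frac{M_0}{\eps^2}\,\dd z\cdot(\text{mass})$, and the explicit integral $\int_{B(0,r)}(-\ln|z|)\,\dd z$ behaves like $r^2(|\ln r|+C)$, so the short-range contribution is $O(1)$ and does not spoil the leading $\gamma_\eps\frac{b(x)^2}{b(z^0)}\lne$ term. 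Collecting the long-range, short-range, and cross-term estimates then yields the claimed inequality, uniformly in $x\in\cal C_{\rho_b}$ and $t\in[0,T_\eps)$.
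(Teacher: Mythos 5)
Your overall skeleton (factor the $b$-weights via the mean value theorem, then control the resulting radial singular integral using the $L^\infty$ bound and a rearrangement argument) is the same as the paper's, and your lower bound $-\psi_\eps\geq -C$ is fine; but the upper bound, as you set it up, has two genuine gaps.

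First, your factorization keeps the wrong weight. Writing $\sqrt{b(x)b(y)}=b(x)+\sqrt{b(x)}\bigl(\sqrt{b(y)}-\sqrt{b(x)}\bigr)$ reduces matters to $b(x)\int(-\ln|x-y|)(b\weps)(t,y)\,\dd y$, an integral against the measure $(b\weps)\,\dd y$ whose total mass is exactly $\gamma_\eps$; the best this can give is $-\psi_\eps\leq b(x)\gamma_\eps\lne+C$. This is \emph{not} the stated bound: the difference between $b(x)\gamma_\eps\lne$ and $\frac{b(x)^2}{b(z^0)}\gamma_\eps\lne$ equals $\gamma_\eps\lne\, b(x)\bigl(b(z^0)-b(x)\bigr)/b(z^0)$, which diverges like $\lne$ wherever $b(x)\neq b(z^0)$, and is positive when $b(x)<b(z^0)$, so your bound is strictly weaker there. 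Your proposed bridge (``on the support $|b(x)-b(z^0)|$ is small'') is not available at this stage: the lemma is asserted for every $x\in\mathcal{C}_{\rho_b}$, and all that is known about $\supp\weps(t,\cdot)$ for $t<T_\eps$ is that it lies in $\mathcal{C}_{\rho_b}$, on which $|b(x)-b(z^0)|$ can be as large as the fixed constant $\rho_b$, independent of $\eps$. The strong localization $|b(x)-b(z^0)|\leq C\lne^{-k}$ is proved only later in the paper, and even it would leave a discrepancy of order $\lne^{1-k}\to\infty$ since $k<1/4$. The paper's proof avoids this entirely by the other factorization: $\sqrt{b(x)b(y)}\,(b\weps)(y)=\sqrt{b(x)}\,b^{3/2}(y)\,\weps(y)$ with $\sqrt{b(x)}\,b^{3/2}(y)=b(x)^2+\sqrt{b(x)}\bigl(b^{3/2}(y)-b^{3/2}(x)\bigr)$, so that the singular integral is taken against the \emph{unweighted} vorticity $\weps(y)$, whose mass is $\int\weps=\gamma_\eps/b(z^0)+\mathcal{O}(1/\lne)$ by Corollary~\ref{coro:mai}; the $\mathcal{O}(1/\lne)$ error times $\lne$ is $\mathcal{O}(1)$, and this is exactly where the constant $\frac{b(x)^2}{b(z^0)}$ comes from.

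Second, your short-range claim is false: $\int_{|x-y|<\eps}(-\ln|x-y|)(b\weps)(t,y)\,\dd y$ is not $\mathcal{O}(1)$ but in general of order $\lne$. Indeed, the crude bound $\|b\weps\|_{L^\infty}\leq C M_0/\eps^2$ gives $\frac{CM_0}{\eps^2}\int_{B(0,\eps)}(-\ln|z|)\,\dd z=C\pi M_0\bigl(\lne+\tfrac12\bigr)$, and this order is attained whenever the vorticity is $\eps$-concentrated around $x$ (e.g.\ at $t=0$ with $x=z^0$, where the short-range region carries essentially the whole leading term). With your accounting, in which the long-range part is already bounded using the full mass $\gamma_\eps$, the total becomes $(\gamma_\eps+CM_0)\lne$, which spoils precisely the constant in front of $\lne$ that the lemma --- and ultimately the coefficient $-\gamma/(4\pi)$ in the limiting ODE --- requires. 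The repair is to let the rearrangement do the bookkeeping of the mass: apply Lemma~\ref{lem:rearrang0} to the whole integral at once, as the paper does, which yields exactly $(\text{mass})\cdot\bigl(\lne+\mathcal{O}(1)\bigr)$; or, if you insist on splitting at scale $\eps$, bound the long-range part by $(\gamma_\eps-m_{\rm in})\lne+C$ and the short-range part by $m_{\rm in}\lne+\mathcal{O}(1)$, rearranging only the short-range mass $m_{\rm in}$, and then sum.
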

\begin{proof}
We infer from the mean value theorem \eqref{MVT} that
\begin{align*}
 -\int_{\Omega}\ln|x-y|& \sqrt{b(x)b(y)} (b\omega_\eps)(t,y)\,\dd y\\
=& -b(x)^2\int_{\Omega}\ln|x-y| \omega_\eps(t,y)\,\dd y\\
& - \sqrt{b(x)}\int_{\Omega} \ln|x-y| (b^{3/2}(y) -b^{3/2}(x) )\omega_\eps(t,y)\,\dd y\\
=&-b(x)^2\int_{\Omega}\ln\left(\frac{|x-y|}{{\rm diam\,} \Omega}\right) \omega_\eps(t,y)\,\dd y+\mathcal{O}(1),
\end{align*}
where ${\rm diam\,} \Omega=\max_{\Omega^2}|x-y|$. This equality is sufficient to get $-C\leq - \psi_\eps(t,x)$. We use now the rearrangement of the mass, namely we apply Lemma~\ref{lem:rearrang0} for $g(s):=-\ln (s/{\rm diam\,} \Omega)\mathds{1}_{s\leq {\rm diam\,}\Omega}$ and $\gamma:=\widetilde{\gamma_\eps}=\int_{\Omega}\omega_\eps(t,x)\, \dd x$, to obtain
\[
- \psi_\eps(t,x)\leq b(x)^2 \frac{2\pi M_{0}}{\varepsilon^2} \int_{0}^{R_{0}} sg(s)\,\dd s +\mathcal{O}(1)
\]
with
\[
 \pi \frac{M_0}{\eps^2} R_{0}^2=\widetilde{\gamma_{\varepsilon}}=\int_{\Omega}\omega_\eps(t,x)\, \dd x
\] 
(in particular $R_0<1$ for $\varepsilon_{0}$ small enough).
Hence
\begin{align*}
- \psi_\eps(t,x)&\leq \frac{2\pi M_{0}}{\varepsilon^2} b(x)^2 \Big(-\frac12 R_{0}^2 \ln R_{0} + \frac14 R_{0}^2 + \frac{R_{0}^2}{2}\ln \rm diam\, \Omega\Big)+\mathcal{O}(1) \\
&\leq \widetilde{\gamma_{\varepsilon}} b(x)^2 |\ln \varepsilon| + \mathcal{O}(1).
\end{align*}
In particular, we have just proved that
\begin{equation}
\label{est:toulouse-19}
-\int_{\Omega}\ln\left(\frac{|x-y|}{{\rm diam\,} \Omega}\right) (b\omega_\eps)(t,y)\,\dd y
\leq C\lne.
\end{equation}
Applying now Corollary~\ref{coro:mai}, we conclude that
\begin{align*}
- \psi_\eps(t,x)
&\leq \gamma_{\varepsilon} \frac{b(x)^2}{b(z_0)} |\ln \varepsilon| + \mathcal{O}(1).
\end{align*}
\end{proof}

Combining Lemmas~\ref{lemma:energy-1} and \ref{lemma:energy-2} with Lemma~\ref{lem:est-psi} enables us to determine the leading order term of the initial energy.

\begin{corollary}\label{coro:energy0}
Under Assumption~\ref{ass: main part} and for $\varepsilon_{0}$ small enough, we have
\begin{equation*}
E_{\varepsilon}(0)=\frac{1}{2\pi} \gamma_{\varepsilon}^2 b(z^0)\lne+\mathcal{O}(1)
\end{equation*}
for all $\eps\in (0,\eps_0)$.
\end{corollary}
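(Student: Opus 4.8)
The plan is to establish the matching upper bound to Lemma~\ref{lemma:energy-2} and then combine the two inequalities. Since Lemma~\ref{lemma:energy-2} already provides $E_{\varepsilon}(0)\ge \frac{1}{2\pi}\gamma_{\varepsilon}^2 b(z^0)\lne+\mathcal{O}(1)$, it remains only to prove the reverse inequality.

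First, I would apply Lemma~\ref{lemma:energy-1} at time $t=0$ to write
\[
E_{\varepsilon}(0)=-\frac1{2\pi}\int_{\Omega}\psi_\eps^0(x)(b\weps^0)(x)\,\dd x+\mathcal{O}(1).
\]
Since we are in the case $\delta=1$, so that $b\weps^0\ge 0$, and $\supp(\weps^0)\subset \cal C_{\rho_b}$, the upper bound on $-\psi_\eps$ from Lemma~\ref{lem:est-psi} can be tested pointwise against the nonnegative weight $b\weps^0$, yielding
\[
-\frac1{2\pi}\int_{\Omega}\psi_\eps^0(b\weps^0)\,\dd x\le \frac1{2\pi}\int_{\Omega}\Big(\gamma_{\varepsilon}\frac{b(x)^2}{b(z^0)}\lne+C\Big)(b\weps^0)(x)\,\dd x.
\]

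It then remains to evaluate the right-hand side. The constant contribution integrates to $\frac{C}{2\pi}\gamma_{\varepsilon}=\mathcal{O}(1)$, while the leading term equals $\frac{1}{2\pi}\frac{\gamma_{\varepsilon}\lne}{b(z^0)}\mathcal{J}_2(0)$ with $\mathcal{J}_2(0)=\int_{\Omega}b(x)^2(b\weps^0)\,\dd x$. Here I would invoke Lemma~\ref{lemma:J} at $t=0$, which gives $\mathcal{J}_2(0)=\gamma_{\varepsilon}b(z^0)^2+\mathcal{O}(1/\lne)$; the crucial point is that this error is of size $1/\lne$, so that after multiplication by the factor $\lne$ it contributes only $\mathcal{O}(1)$. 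This produces the upper bound $E_{\varepsilon}(0)\le \frac{1}{2\pi}\gamma_{\varepsilon}^2 b(z^0)\lne+\mathcal{O}(1)$, and combining it with Lemma~\ref{lemma:energy-2} closes the estimate.

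There is essentially no serious obstacle here: the corollary is a direct synthesis of the three preceding lemmas. The only point requiring care is the bookkeeping of error terms, specifically verifying that multiplying the $\mathcal{O}(1/\lne)$ error in $\mathcal{J}_2(0)$ by $\lne$ leaves an $\mathcal{O}(1)$ remainder, and noting that the sharp concentration $\supp(\weps^0)\subset B(z^0,M_0\eps)$ enters only implicitly, through the already-established lower bound of Lemma~\ref{lemma:energy-2}.
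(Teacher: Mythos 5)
Your proposal is correct and follows essentially the same route as the paper: the lower bound is quoted from Lemma~\ref{lemma:energy-2}, and the upper bound is obtained by combining Lemma~\ref{lemma:energy-1} at $t=0$ with the pointwise bound on $-\psi_\eps$ from Lemma~\ref{lem:est-psi}, tested against the nonnegative measure $b\weps^0$. The only (harmless) difference is the last bookkeeping step: the paper estimates $\int_\Omega (b(x)^2-b(z^0)^2)(b\weps^0)(x)\,\dd x$ directly from the initial concentration $\supp(\weps^0)\subset B(z^0,M_0\eps)$ and the mean value theorem \eqref{MVT}, giving an error $C\gamma_\eps^2\eps\lne$, whereas you cite Lemma~\ref{lemma:J} at $t=0$ to get $\mathcal{J}_2(0)=\gamma_\eps b(z^0)^2+\mathcal{O}(1/\lne)$ (so that, contrary to your closing remark, the sharp concentration also enters the upper bound, hidden inside the initial-time estimate in the proof of Lemma~\ref{lemma:J}); both yield the required $\mathcal{O}(1)$ remainder.
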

\begin{proof}
Lemma~\ref{lemma:energy-2} yields the lower bound. To compute the upper bound, we invoke Lemmas~\ref{lemma:energy-1} and \ref{lem:est-psi} to obtain
\begin{align*}
 E_{\varepsilon}(0)&=-\frac1{2\pi}\int_{\Omega}\psi_{\eps}^0(x)(b\weps^0)(x)\,\dd x\\
 &\leq \frac{\gamma_{\varepsilon}|\ln\eps|}{2\pi b(z^0)}\int_{\Omega}b(x)^2(b\weps^0)(x)\,\dd x+C\gamma_{\varepsilon}\\
 &\leq\frac{\gamma_{\varepsilon}^2}{2\pi}b(z^0)|\ln\eps|+\frac{\gamma_{\varepsilon}}{2\pi b(z^0)}|\ln\eps|
 \int_{\Omega}(b(x)^2-b(z^0)^2)(b\weps^0)(x) \,\dd +C\gamma_{\varepsilon}\\
 &\leq\frac{\gamma_{\varepsilon}^2}{2\pi}b(z^0)|\ln\eps|+C\gamma_{\varepsilon}^2\eps|\ln\eps|+\mathcal{O}(1),
\end{align*}
where we have used the initial localization of the vorticity, see Assumption~\ref{data red} and \eqref{MVT}.
\end{proof}

Next, we prove the conservation of the local energy at leading order up to time $T_{\eps}$.

\begin{proposition}\label{prop:energy}
Under Assumption~\ref{ass: main part} and for $\varepsilon_{0}$ small enough, there exists $C>0$ such that for all $t\in [0,T_\eps)$ we have
\[
\Big| E_{\varepsilon}(t) - \frac{1}{2\pi} \gamma_{\varepsilon}^2 b(z^0)\lne \Big| \leq C.
\]
\end{proposition}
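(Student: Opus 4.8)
The plan is to show that the local energy $E_\varepsilon$ varies at a rate of order $\mathcal{O}(1)$ on $[0,T_\varepsilon)$, so that integrating in time and invoking the initial estimate of Corollary~\ref{coro:energy0} closes the argument. Concretely, if $|\frac{\dd}{\dd t}E_\varepsilon(t)|\leq C_0$ uniformly in $\varepsilon$, then for $t\in[0,T_\varepsilon)$ one has $|E_\varepsilon(t)-E_\varepsilon(0)|\leq C_0 T$, and combining with $E_\varepsilon(0)=\frac{1}{2\pi}\gamma_\varepsilon^2 b(z^0)\lne+\mathcal{O}(1)$ yields the claim. Thus the entire difficulty reduces to the uniform bound on $\frac{\dd}{\dd t}E_\varepsilon$.

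First I would differentiate $E_\varepsilon(t)=-\iint_{\Omega^2}G_{\Omega,b}(x,y)(b\omega_\varepsilon)(t,x)(b\omega_\varepsilon)(t,y)\,\dd x\dd y$. Since the kernel is symmetric and time-independent, this gives $\frac{\dd}{\dd t}E_\varepsilon=-2\int_\Omega\Psi_\varepsilon\,\partial_t(b\omega_\varepsilon)\,\dd x$; inserting the weak formulation of \eqref{eq:transport-1} with the frozen-in-time test function $\Psi_\varepsilon(t,\cdot)$ — justified by approximation thanks to $\omega_\varepsilon\in C([0,T];L^r)$ and $\Psi_\varepsilon(t,\cdot)\in W^{2,p}_{\loc}$, and by the fact that $\supp\omega_\varepsilon(t,\cdot)$ is compact in $\Omega$ for $t<T_\varepsilon$, so no boundary term arises — I obtain
\[
\frac{\dd}{\dd t}E_\varepsilon(t)=-\frac{2}{\lne}\int_\Omega\nabla\Psi_\varepsilon\cdot b(v_\varepsilon+F_\varepsilon)\,\omega_\varepsilon\,\dd x.
\]
The key algebraic cancellation is that, by \eqref{BS:is}, $bv_\varepsilon=\nabla^\perp\Psi_\varepsilon+\sum_k c_k\nabla^\perp\psi^k$ with $c_k=\Gamma_\varepsilon^k+\int_\Omega b\omega_\varepsilon\varphi^k$, and $\nabla\Psi_\varepsilon\cdot\nabla^\perp\Psi_\varepsilon\equiv0$; hence the singular self-interaction drops out and only
\[
\frac{\dd}{\dd t}E_\varepsilon(t)=-\frac{2}{\lne}\int_\Omega\nabla\Psi_\varepsilon\cdot G\,\omega_\varepsilon\,\dd x,\qquad G:=\sum_k c_k\nabla^\perp\psi^k+bF_\varepsilon,
\]
remains, where $G$ is bounded and Lipschitz on $\cal C_{\rho_b}$ uniformly in $\varepsilon$ (by \eqref{est:phipsi}, the boundedness of $c_k$, and Assumption~\ref{ass: F}).

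The heart of the proof is then to bound $\int_\Omega\nabla\Psi_\varepsilon\cdot G\,\omega_\varepsilon\,\dd x$ by $\mathcal{O}(\lne)$. I would split $\nabla\Psi_\varepsilon$ according to the decomposition \eqref{eq:decomposition}/\eqref{eq.Gdecomp} into its singular part (from $\frac{1}{2\pi}\sqrt{b(x)b(y)}\ln|x-y|$), its $L$-part carrying $\psi_\varepsilon$, and the $R_{\Omega,b}$ remainder. The remainder contribution is $\mathcal{O}(1)$ by \eqref{est:R} together with $\int b\omega_\varepsilon\leq C$. The $L$-part has magnitude $\leq\frac{|\nabla b|}{4\pi b}|\psi_\varepsilon|\leq C\lne$ on $\cal C_{\rho_b}$ by Lemma~\ref{lem:est-psi}, so it contributes $\leq C\lne\int\omega_\varepsilon\,\dd x\leq C\lne$ by Corollary~\ref{coro:mai}. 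The delicate term is the singular one,
\[
T_K=\frac{1}{2\pi}\iint_{\Omega^2}\frac{x-y}{|x-y|^2}\cdot\frac{\sqrt{b(x)b(y)}}{b(x)}G(x)\,(b\omega_\varepsilon)(x)(b\omega_\varepsilon)(y)\,\dd x\dd y,
\]
where a naive $1/|x-y|$ estimate only yields $\mathcal{O}(1/\varepsilon)$. Here I would symmetrize in $x\leftrightarrow y$ to rewrite the integrand with the factor $\frac{G(x)}{b(x)}-\frac{G(y)}{b(y)}$; since $G/b$ is Lipschitz on the compact interior support, this factor is $\leq C|x-y|$, which cancels the singularity and gives $|T_K|\leq C\iint(b\omega_\varepsilon)(x)(b\omega_\varepsilon)(y)\leq C\gamma_\varepsilon^2\leq C$.

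Putting the three bounds together yields $|\frac{\dd}{\dd t}E_\varepsilon(t)|\leq\frac{2}{\lne}(C\lne+C)\leq C$, uniformly on $[0,T_\varepsilon)$, and the proposition follows by integration as explained above. I expect the two main obstacles to be: (a) the symmetrization argument for $T_K$, which is what removes the singularity and forces one to verify that $G/b$ is genuinely Lipschitz on $\cal C_{\rho_b}$, using $b\geq\min_{\Omega_{r_0}}b>0$ there together with the mean value inequality \eqref{MVT}; and (b) the rigorous justification of differentiating $E_\varepsilon$ and of using $\Psi_\varepsilon(t,\cdot)$ as a test function, which rests on the continuity-in-time and local elliptic regularity of $\Psi_\varepsilon$ and on the compact confinement of $\supp\omega_\varepsilon(t,\cdot)$ up to time $T_\varepsilon$.
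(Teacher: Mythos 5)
Your proposal is correct and takes essentially the same route as the paper's proof: differentiate $E_\varepsilon$ through the weak formulation, kill the self-interaction term via $\nabla\Psi_\varepsilon\cdot\nabla^\perp\Psi_\varepsilon=0$, decompose the kernel by \eqref{eq.Gdecomp}, bound the $R_{\Omega,b}$ part by \eqref{est:R} and the logarithmic part by Lemma~\ref{lem:est-psi} (the paper invokes \eqref{est:toulouse-19}, which is the same estimate), remove the singularity of the $K$-part by symmetrization against a Lipschitz field, and conclude by integrating in time and using Corollary~\ref{coro:energy0}. The only cosmetic difference is that you bundle $\sum_k c_k\nabla^\perp\psi^k+bF_\varepsilon$ into a single field $G$ and symmetrize $G/b$, whereas the paper symmetrizes $F_\varepsilon$ and each $\nabla^\perp\psi^k$ separately.
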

\begin{proof}
As $\sqrt b v_{\varepsilon}\in C(\R_{+}; L^2(\Omega))$, we can give a sense in $\cal D'(0,T_{\varepsilon})$ to the time-derivative of $E_{\eps}$ by arguing similarly as detailed in the proof of Lemma~\ref{lemma:J}. We compute
\begin{align*}
 \frac{\dd}{\dd t}E_{\varepsilon}(t) 
 =&-\frac2{\lne}\iint_{\Omega^2} \nabla_{x} G_{\Omega,b}(x,y)\cdot (v_{\varepsilon}+F_{\varepsilon})(t,x) (b\omega_{\varepsilon})(t,x)(b\omega_{\varepsilon})(t,y)\,\dd x\dd y\\ 
 =&-\frac2{\lne} \sum_{k=1}^{N_{is}} \Big(\Gamma_{\varepsilon}^k + \int_{\Omega}b\omega_{\varepsilon}\varphi^k \Big)\\
 &\qquad\qquad \times\iint_{\Omega^2} \nabla_{x} G_{\Omega,b}(x,y)\cdot\nabla^\perp \psi^k(x)(b\omega_{\varepsilon})(t,x)(b\omega_{\varepsilon})(t,y)\,\dd x\dd y \\
&-\frac2{\lne}\iint_{\Omega^2} \nabla_{x} G_{\Omega,b}(x,y)\cdot F_{\varepsilon}(t,x) (b\omega_{\varepsilon})(t,x)(b\omega_{\varepsilon})(t,y)\,\dd x\dd y 
\end{align*}
because 
\[
b(x)v_{\varepsilon}(t,x) =\Big(\int_{\Omega} \nabla_{x} G_{\Omega,b}(x,z) (b\omega_{\varepsilon})(z)\,\dd z \Big)^\perp+ \sum_{k=1}^{N_{is}} \Big(\Gamma_{\varepsilon}^k + \int_{\Omega}b\omega_{\varepsilon}\varphi^k \Big)\nabla^\perp \psi^k.
\]

By the decomposition of the Green kernel \eqref{eq.Gdecomp}, we deduce easily from \eqref{est:phipsi}, \eqref{est:R} and Assumption~\ref{ass: F} that
\begin{gather*}
\Big|\frac2{\lne}\iint_{\Omega^2} \nabla_{x} R_{\Omega,b}(x,y)\cdot \nabla^\perp \psi^k(x) (b\omega_{\varepsilon})(t,x)(b\omega_{\varepsilon})(t,y)\,\dd x\dd y \Big| \leq \frac{C}{\lne}, \\
 \Big|\frac2{\lne}\iint_{\Omega^2} \nabla_{x} R_{\Omega,b}(x,y)\cdot F_{\varepsilon}(t,x) (b\omega_{\varepsilon})(t,x)(b\omega_{\varepsilon})(t,y)\,\dd x\dd y \Big| \leq \frac{C}{\lne}. 
 \end{gather*}

Next, we compute by the skew-symmetry of $K$ combined with the mean value theorem \eqref{MVT} and Assumption~\ref{ass: F}
\begin{align*}
 \Big|\frac1{\pi\lne}&\iint_{\Omega^2} K(x,y)^\perp \cdot F_{\varepsilon}(t,x) \sqrt{b(x)b(y)} (b\omega_{\varepsilon})(t,x)(b\omega_{\varepsilon})(t,y)\,\dd x\dd y \Big| \\
 =& \Big|\frac1{2\pi\lne}\iint_{\Omega^2} K(x,y)^\perp \cdot \Big( F_{\varepsilon}(t,x)- F_{\varepsilon}(t,y)\Big)\\
 &\hspace{5cm} \times \sqrt{b(x)b(y)} (b\omega_{\varepsilon})(t,x)(b\omega_{\varepsilon})(t,y)\,\dd x\dd y \Big| \\
 \leq& \frac{C}{\lne}.
 \end{align*}
The last term is treated using Assumption~\ref{ass: F}
 \begin{align*}
 \Big|\frac1{\pi\lne}&\iint_{\Omega^2} \ln |x-y| \frac{\nabla b(x)}{2b(x)} \cdot F_{\varepsilon}(t,x) \sqrt{b(x)b(y)} (b\omega_{\varepsilon})(t,x)(b\omega_{\varepsilon})(t,y)\,\dd x\dd y \Big|\\
&\leq \frac{C}{\lne}\int_{\Omega} (b\omega_{\varepsilon})(t,y) \Big( \int_{\Omega} | \ln |x-y| | (b\omega_{\varepsilon})(t,x)\,\dd x \Big)\,\dd y \\
&\leq \frac{C}{\lne}\int_{\Omega} (b\omega_{\varepsilon})(t,y) \Big( \int_{\Omega} - \ln \frac{|x-y|}{{\rm diam\,}\Omega} (b\omega_{\varepsilon})(t,x)\,\dd x \Big)\,\dd y +\cal O\Big(\frac1\lne\Big)\\
&\leq \cal O(1)
 \end{align*}
 where we have used \eqref{est:toulouse-19}. These two last estimates can be performed in the same way replacing $F_{\varepsilon}$ by $\nabla^\perp\psi^k$.
 
 With these estimates, we conclude that for all $t\in [0,T_{\varepsilon})$
 \[
 \Big| \frac{\dd}{\dd t}E_{\varepsilon}(t) \Big| \leq C
 \]
which means that $E_{\varepsilon}(t)=E_{\varepsilon}(0) +\mathcal{O}(1)$. Corollary~\ref{coro:energy0} allows us to conclude.
\end{proof}

\begin{remark}\label{rem:LipF}
We emphasize that throughout the paper we only rely once on Lipschitz property of $F_{\eps}$ stated in Assumption~\ref{ass: F}, namely to infer the expansion of the energy in Proposition~\ref{prop:energy}. Note that the proof adapts to exterior fields with Lipschitz constant of order $\mathcal{O}(\lne)$ instead of $\mathcal{O}(1)$ as considered in Assumption~\ref{ass: F}. 

The approach presented is hence robust enough to be adapted to different scaling regimes leading to an exterior field which respects such a bound. Note, however, that this is insufficient in order to investigate phenomena such as leapfrogging vortex rings and similar behavior for the lake equations.
\end{remark}

This section is completed by an estimate on the stream function $\psi_\eps$ that is required in the sequel. 

\begin{lemma}\label{lemma:L2}
Under Assumption~\ref{ass: main part} and for $\varepsilon_{0}$ small enough, there exists $C>0$ such that for all $t\in [0,T_\eps)$ we have
\[
\int_{\Omega}\left |\gamma_{\varepsilon}\psi_\eps(t,x)-\int_{\Omega}\psi_\eps(t,y)(b\weps)(t,y)\,\dd y \right|^2 (b\weps)(t,x)\,\dd x
\leq C\lne.
\]
\end{lemma}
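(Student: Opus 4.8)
The plan is to read the left-hand side as (a constant multiple of) the $\mu$-variance of $\psi_\eps$, where $\dd\mu := (b\weps)(t,\cdot)\,\dd x$ is the measure of total mass $\gamma_\eps$ (which is conserved in $t$, since $\diver(bF_\eps)=0$ and $(bF_\eps)\cdot\mb n=0$ by Assumption~\ref{ass: F}). Setting $A := \int_\Omega\psi_\eps\,\dd\mu$ and $\overline\psi := A/\gamma_\eps$, one has $\gamma_\eps\psi_\eps - A = \gamma_\eps(\psi_\eps-\overline\psi)$, so as $\gamma_\eps$ is bounded it suffices to prove $\int_\Omega(\psi_\eps-\overline\psi)^2\,\dd\mu \leq C\lne$. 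The naive route — bounding $|\psi_\eps-\overline\psi|$ pointwise by $\mathcal O(\lne)$, which is all Lemma~\ref{lem:est-psi} gives — only yields $\mathcal O(\lne^2)$; the entire point is to gain one factor of $\lne$, and this is where the sharp energy expansion must be used.

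First I would pin down the mean. Combining Lemma~\ref{lemma:energy-1}, which reads $E_\eps = -\tfrac1{2\pi}A + \mathcal O(1)$, with the energy expansion of Proposition~\ref{prop:energy} gives $A = -\gamma_\eps^2 b(z^0)\lne + \mathcal O(1)$, hence $\overline\psi = -\gamma_\eps b(z^0)\lne + \mathcal O(1)$. Next I introduce the leading profile suggested by Lemma~\ref{lem:est-psi} together with its complementary part,
\[
P := \psi_\eps + \gamma_\eps\frac{b^2}{b(z^0)}\lne, \qquad Q := \gamma_\eps\frac{b^2 - b(z^0)^2}{b(z^0)}\lne,
\]
so that by the previous step $\psi_\eps-\overline\psi = P - Q + \mathcal O(1)$, the $\mathcal O(1)$ being a constant. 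It then suffices to estimate $\int P^2\,\dd\mu$ and $\int Q^2\,\dd\mu$ separately.

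The term $Q$ is the easy one: it only measures the spread of the blob in the $b$-direction, and expanding $(b^2-b(z^0)^2)^2 = b^4 - 2b(z^0)^2 b^2 + b(z^0)^4$ and invoking Lemma~\ref{lemma:J} for $k=2,4$ yields $\int(b^2-b(z^0)^2)^2\,\dd\mu = \mathcal O(\lne^{-1})$, whence $\int Q^2\,\dd\mu = \mathcal O(\lne)$. The term $P$ is the crux and the main obstacle, and here I would use the two halves of Lemma~\ref{lem:est-psi} asymmetrically: the upper bound $\psi_\eps \leq C$ gives the \emph{pointwise} bound $P \leq C\lne$ (using only that $b$ is bounded), while the other half gives $P \geq -C$; moreover the first moment is controlled, by Lemma~\ref{lemma:J} with $k=2$ and the value of $A$ from the first step, through the cancellation $\int P\,\dd\mu = A + \tfrac{\gamma_\eps\lne}{b(z^0)}\mathcal{J}_2 = \mathcal O(1)$. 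Splitting $P = P_+ - P_-$, the bound $P_- \leq C$ makes $\int P_-^2\,\dd\mu = \mathcal O(1)$ immediate, while for the positive part I interpolate $L^\infty$ against $L^1$: $\int P_+^2\,\dd\mu \leq \|P_+\|_{L^\infty}\int P_+\,\dd\mu \leq C\lne\cdot\mathcal O(1)$, using $\int P_+\,\dd\mu \leq \int P\,\dd\mu + \int P_-\,\dd\mu = \mathcal O(1)$. This gives $\int P^2\,\dd\mu = \mathcal O(\lne)$; assembling the three contributions via $(\psi_\eps-\overline\psi)^2 \leq 3P^2 + 3Q^2 + \mathcal O(1)$ and multiplying by $\gamma_\eps^2$ finishes the argument.

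I would flag the $L^\infty$--$L^1$ interpolation as the heart of the matter: it is exactly what converts the one-sided pointwise control of $\psi_\eps$, combined with the sharp (second-order) knowledge of its $\mu$-average coming from the energy, into the desired $\lne$ bound. This sidesteps any attempt to evaluate $\int\psi_\eps^2\,\dd\mu$ directly to the required $\mathcal O(\lne)$ precision, which would seem to demand detailed information on the concentration scale of $\weps$ that is not yet available at this point of the paper (strong localization being proved only later, in Section~\ref{sec:transverse}).
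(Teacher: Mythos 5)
Your proof is correct, and it draws on exactly the same ingredients as the paper's: the identification of the $\mu$-mean of $\psi_\eps$ via Lemma~\ref{lemma:energy-1} combined with Proposition~\ref{prop:energy}, the one-sided pointwise bounds of Lemma~\ref{lem:est-psi}, and the moment concentration of Lemma~\ref{lemma:J}. The assembly, however, is genuinely different. The paper symmetrizes by Cauchy--Schwarz, which reduces the claim to the variance bound
\[
\gamma_\eps\int_{\Omega}\psi_\eps^2\,(b\weps)\,\dd x-\Big(\int_{\Omega}\psi_\eps\,(b\weps)\,\dd x\Big)^2\leq C\lne,
\]
and then bounds the quadratic term from above by \emph{squaring} the pointwise bound of Lemma~\ref{lem:est-psi} (producing $\tfrac{\gamma_\eps^2\lne^2}{b(z^0)^2}\mathcal{J}_4$ at leading order), bounds the squared mean from below through the energy expansion, and cancels the two $\lne^2$ terms using Lemma~\ref{lemma:J} with $k=4$. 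You never square the pointwise bound: you recentre $\psi_\eps$ around the profile $-\gamma_\eps b^2\lne/b(z^0)$ and the mean, dispose of the profile spread $Q$ by the $k=2,4$ moments, and convert the one-sided bounds $-C\leq P\leq C\lne$ together with the first-moment cancellation $\int P\,\dd\mu=\mathcal{O}(1)$ (itself an energy-plus-$\mathcal{J}_2$ cancellation, the analogue of the paper's $\mathcal{J}_4$ step) into $\int P_+^2\,\dd\mu=\mathcal{O}(\lne)$ by $L^\infty$--$L^1$ interpolation after sign splitting. Your route costs more bookkeeping but makes very explicit where the factor $\lne$ is gained, and it only requires linear (first-moment) information on the deviation $P$; the paper's argument is shorter and purely quadratic. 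One correction to your closing remark: it is not the case that estimating $\int\psi_\eps^2\,\dd\mu$ directly to precision $\mathcal{O}(\lne)$ would demand knowledge of the concentration scale of $\weps$ in space. That is precisely what the paper does, and the only concentration input is Lemma~\ref{lemma:J}, i.e.\ concentration of the \emph{values of $b$} on the support rather than of the support itself --- exactly the same input your term $Q$ consumes.
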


\begin{proof}
We have by Cauchy-Schwarz inequality 
\begin{align*}
\frac1{\gamma_{\varepsilon}}&\int_{\Omega}\left (\int_{\Omega}\left(\psi_\eps(x)-\psi_\eps(y)\right)(b\weps)(y)\,\dd y \right)^2 (b\weps)(x)\,\dd x\\
=&\frac1{\gamma_{\varepsilon}}\int_{\Omega}\left (\int_{\Omega}\left(\psi_\eps(x)-\psi_\eps(y)\right)\sqrt{(b\weps)(y)}\sqrt{(b\weps)(y)}\,\dd y \right)^2 (b\weps)(x)\,\dd x\\
\leq& \iint_{\Omega^2}\left (\psi_\eps(x)-\psi_\eps(y)\right)^2(b\weps)(y) (b\weps)(x)\,\dd x\dd y\\
\leq&\iint_{\Omega^2}\psi_\eps(x)^2(b\weps)(y) (b\weps)(x)\,\dd x\dd y+\iint_{\Omega^2}\psi_\eps(y)^2(b\weps)(y) (b\weps)(x)\,\dd x\dd y\\
&-2\iint_{\Omega^2} \psi_\eps(x) \psi_\eps(y)(b\weps)(y) (b\weps)(x)\,\dd x\dd y\\
\leq&2\gamma_{\varepsilon} \int_{\Omega}\psi_\eps(x)^2 (b\weps)(x)\,\dd x-2\left(\int_{\Omega} \psi_\eps(x) (b\omega_\eps)(x)\,\dd x\right)^2.
\end{align*}
Using Lemma~\ref{lem:est-psi}, we estimate the first term in the right hand side:
\[
2\gamma_{\varepsilon} \int_{\Omega}\psi_\eps(x)^2 (b\weps)(x)\,\dd x 
\leq 2 \frac{\gamma_{\varepsilon}^3}{b(z^0)^2}\lne^2\int_{\Omega}b(x)^4 (b\weps)(x)\,\dd x+ C\lne.
\]
Therefore, Lemma~\ref{lemma:energy-1} implies that
\begin{multline*}
\frac1{\gamma_{\varepsilon}}\int_{\Omega}\left (\int_{\Omega}\left(\psi_\eps(x)-\psi_\eps(y)\right)(b\weps)(y)\,\dd y \right)^2 (b\weps)(x)\,\dd x\\
\leq 2\left( \frac{\gamma_{\varepsilon}^3}{b(z^0)^2}\lne^2\int_{\Omega}b(x)^4 (b\weps)(x)\,\dd x -(2\pi)^2E_{\varepsilon}(t)^2\right)+ C(\lne+E_{\varepsilon}(t)).
\end{multline*}
Proposition~\ref{prop:energy} gives
$$E_{\varepsilon}(t)^2 = \frac{1}{(2\pi)^2} \gamma_{\varepsilon}^4 b(z^0)^2\lne^2+\mathcal{O}(\lne),$$
thus we obtain
\begin{multline*}
\int_{\Omega}\left (\int_{\Omega}\left(\psi_\eps(x)-\psi_\eps(y)\right)(b\weps)(y)\,\dd y \right)^2 (b\weps)(x)\,\dd x\\
\leq 2 \frac{\gamma_{\varepsilon}^4\lne^2}{b(z^0)^2} \left(\int_{\Omega}b(x)^4 (b\weps)(x)\,\dd x-\gamma_{\varepsilon} b(z^0)^4\right)+C\lne.
\end{multline*}
By virtue of Lemma~\ref{lemma:J} for $k=4$ we conclude that
\[
\int_{\Omega}\left (\int_{\Omega}\left(\psi_\eps(x)-\psi_\eps(y)\right)(b\weps)(y)\,\dd y \right)^2 (b\weps)(x)\,\dd x\leq C \lne.
\]
\end{proof}

\section{Estimate on the momentum and weak concentration}\label{sec:momemtum}

Throughout this section, we consider again solutions $(\omega_\eps, v_\eps)$ to \eqref{eq:transport-1} on $[0,T)\times \Omega$ that satisfy Assumption~\ref{ass: main part}. We introduce the center of mass
\begin{equation}\label{def:point-vortex}
z_\eps(t)=\frac{1}{\gamma_{\varepsilon}}\int_{\Omega} x b(x)\weps(t,x)\,\dd x
\end{equation}
and the moment of intertia centered at $z_\eps(t)$ given by
\begin{equation}\label{def:momentum}
\Ie(t)=\int_{\Omega} |x-z_\eps(t)|^2 b(x)\omega_\eps(t,x)\,\dd x,
\end{equation}
both of which are crucial for the study of the evolution of initially sharply concentrated vorticity. Note that in view of Assumption~\ref{ass: main part}, we initially have
\[
\Ie(0) \leq 4\gamma_{\varepsilon}M_{0}^2\varepsilon^2
\]
because $ \supp \omega_{\varepsilon}^0\subset B(z^0,\eps M_0)$ and 
\[
|z^{0}-z_\eps(0)| \leq\frac1{\gamma_{\varepsilon}} \int_{\Omega} |z^{0}-x| (b\omega_{\varepsilon}^0)(x) \,\dd x \leq M_{0} \varepsilon .
\]

As $z_\eps(t)$ belongs to the convex hull of $\cal C_{\rho_b}$ for any $t\in [0,T_{\varepsilon})$, it is not obvious, without adding an extra geometrical assumption on the lake, that $z_\eps(t)$ belongs to $\Omega$. For this reason, we introduce 
\begin{equation}\label{def:maxi-T'}
 T_{\eps}'=\sup\left\{T_0\in [0,T_{\varepsilon}] \, \,: \, z_{\varepsilon}(t)\in \cal C_{\rho_b} \quad \text{for all} \, t\in[0,T_0)\right\}.
\end{equation}
One of the purposes of this section is to prove that $ T_{\eps}'=T_{\varepsilon}$.

We start with the following lemma.
\begin{lemma}\label{lemma:b(z)}
Under Assumption~\ref{ass: main part}, there exists $C>0$ such that for all $t\in [0,T_\eps')$
\begin{equation*}
|b(z_\eps(t))-b(z^0)|\leq C t \left(\sup_{s\in [0,t]}\sqrt{\Ie(s)}+\frac{1}{\lne}\right)+C\eps.
\end{equation*}
\end{lemma}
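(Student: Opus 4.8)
The plan is to differentiate $t\mapsto b(z_\eps(t))$, bound the derivative pointwise, and integrate, separately absorbing the initial discrepancy. For the latter, the estimate $|z^0-z_\eps(0)|\le M_0\eps$ recorded just above \eqref{def:momentum}, together with the fact that $b$ is of class $C^3$ and bounded away from $0$ on $\cal C_{\rho_b}\subset\Omega_{r_0}$, immediately yields $|b(z_\eps(0))-b(z^0)|\le C\eps$, which produces the $C\eps$ term. It then remains to control $\frac{\dd}{\dd t}b(z_\eps(t))=\nabla b(z_\eps(t))\cdot\dot z_\eps(t)$ on $[0,T_\eps')$, where by definition \eqref{def:maxi-T'} one has $z_\eps(t)\in\cal C_{\rho_b}$, so that $\nabla b(z_\eps(t))$ is bounded and $\nabla b$ is Lipschitz along the relevant paths via \eqref{MVT}.

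Differentiating \eqref{def:point-vortex} and using the weak formulation of \eqref{eq:transport-1} exactly as in the proof of Lemma~\ref{lemma:J} (with the cutoff $\chi$ understood), an integration by parts against the affine weight $x$ gives $\dot z_\eps(t)=\frac1{\gamma_\eps\lne}\int_\Omega (v_\eps+F_\eps)(t,x)(b\weps)(t,x)\,\dd x$. I then insert the decomposition \eqref{eq:decomposition}, $v_\eps=v_{\eps,K}+v_{\eps,L}+v_{\eps,R}$, and treat the four contributions to $\nabla b(z_\eps)\cdot\dot z_\eps$ separately. The external field $F_\eps$ and the remainder $v_{\eps,R}$ are bounded on $\cal C_{\rho_b}$ by Assumption~\ref{ass: F} and \eqref{bound:vR} respectively, and since $\int_\Omega b\weps=\gamma_\eps$, each produces a $\mathcal{O}(1/\lne)$ contribution after dividing by $\gamma_\eps\lne$. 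The singular part $v_{\eps,K}$ is handled by symmetrization: from \eqref{eq:uK} the relevant kernel carries the factor $\sqrt{b(y)/b(x)}$, and using the antisymmetry $K(y,x)=-K(x,y)$ with the symmetry of the measure $(b\weps)(x)(b\weps)(y)$ replaces it by $\tfrac12(b(y)-b(x))/\sqrt{b(x)b(y)}$; since $|K(x,y)|=|x-y|^{-1}$ and $|b(x)-b(y)|\le C|x-y|$ on $\cal C_{\rho_b}$, the effective kernel is bounded, so the double integral is $\mathcal{O}(\gamma_\eps^2)$, i.e. again $\mathcal{O}(1/\lne)$.

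The main term is $v_{\eps,L}$, and here the decisive observation is that $v_{\eps,L}(x)=\frac{\nabla^\perp b(x)}{4\pi b^2(x)}\psi_\eps(x)$ is tangent to the level sets of $b$, so $\nabla b(x)\cdot v_{\eps,L}(x)=0$. Consequently $\nabla b(z_\eps)\cdot v_{\eps,L}(x)=\bigl(\nabla b(z_\eps)-\nabla b(x)\bigr)\cdot v_{\eps,L}(x)$, which by the Lipschitz bound on $\nabla b$ (from $b\in C^3$ and \eqref{MVT}) is at most $C|z_\eps-x|\,|v_{\eps,L}(x)|$. Bounding $|v_{\eps,L}(x)|\le C\lne$ via Lemma~\ref{lem:est-psi} and applying Cauchy--Schwarz, $\int_\Omega|z_\eps-x|(b\weps)\,\dd x\le\sqrt{\gamma_\eps}\sqrt{\Ie(t)}$, gives $\bigl|\int_\Omega\nabla b(z_\eps)\cdot v_{\eps,L}(x)(b\weps)(x)\,\dd x\bigr|\le C\lne\sqrt{\gamma_\eps}\sqrt{\Ie(t)}$, hence a $\mathcal{O}(\sqrt{\Ie(t)})$ contribution after division by $\gamma_\eps\lne$. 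Collecting the four bounds yields $\bigl|\frac{\dd}{\dd t}b(z_\eps(t))\bigr|\le C\sqrt{\Ie(t)}+C/\lne$, and integrating on $[0,t]$, together with the $C\eps$ initial term, gives the claim.

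The step I expect to be the main obstacle is precisely this $v_{\eps,L}$ estimate: because $|v_{\eps,L}|$ is only $\mathcal{O}(\lne)$, a naive bound would give a derivative of order $\lne$ and be useless; it is the exact cancellation $\nabla b\cdot\nabla^\perp b=0$, turning the leading contribution into a difference $\nabla b(z_\eps)-\nabla b(x)$ controlled by $|z_\eps-x|$ and hence by $\sqrt{\Ie}$, that makes the argument work. Care must also be taken that both $z_\eps(t)$ and $\supp\weps(t,\cdot)$ lie in $\cal C_{\rho_b}$ so that all the regularity and lower bounds on $b$ apply; this is guaranteed on $[0,T_\eps')$ since $T_\eps'\le T_\eps$.
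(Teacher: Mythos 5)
Your proposal is correct and follows essentially the same argument as the paper: the weak formulation with test function $x$, the decomposition \eqref{eq:decomposition}, skew-symmetry of $K$ for $v_{\eps,K}$, boundedness of $v_{\eps,R}+F_\eps$, and above all the cancellation $\nabla b\cdot\nabla^\perp b=0$ combined with Lemma~\ref{lem:est-psi} and Cauchy--Schwarz to convert the $v_{\eps,L}$ term into $C\sqrt{\Ie}$. The only (immaterial) difference is that you exploit the orthogonality at the point $x$, writing $\nabla b(z_\eps)\cdot v_{\eps,L}(x)=(\nabla b(z_\eps)-\nabla b(x))\cdot v_{\eps,L}(x)$, whereas the paper exploits it at $z_\eps$, inserting $\nabla^\perp b(x)-\nabla^\perp b(z_\eps)$; both rest on the same Lipschitz bound via \eqref{MVT}.
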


\begin{proof}We use the weak formulation of \eqref{eq:transport-1} given by Proposition~\ref{prop:well-posed} with test function $\Phi(x)=x$. We therefore have in $\cal{D}'(0,T_{\varepsilon})$ and by the decomposition \eqref{eq:decomposition} of the velocity field $\veps$:
\begin{align*}
\frac{\dd}{\dd t} b(z_\eps)=&\dot{z}_\eps\cdot \nabla b(z_\eps)\\
=&\frac{1}{\gamma_{\varepsilon}\lne} \nabla b(z_\eps)\cdot \int_{\Omega} (v_{\varepsilon}+F_{\varepsilon})(x)(b\weps)(x)\,\dd x \\
=&\frac{1}{\gamma_{\varepsilon}\lne} \nabla b(z_\eps)\cdot \int_{\Omega} v_{\varepsilon,K}(x)(b\weps)(x)\,\dd x \\
&+\frac{1}{\gamma_{\varepsilon}\lne} \nabla b(z_\eps)\cdot \int_{\Omega} v_{\varepsilon,L}(x)(b\weps)(x)\,\dd x \\
&+\frac{1}{\gamma_{\varepsilon}\lne} \nabla b(z_\eps)\cdot \int_{\Omega} (v_{\varepsilon,R}+F_{\varepsilon})(x)(b\weps)(x)\,\dd x.
\end{align*}

We estimate the first term by the skew-symmetry of $K$
\begin{align*}
4\pi\Big| & \int_{\Omega} v_{\varepsilon,K}(x)(b\weps)(x)\,\dd x \Big|\\
=&
2\Big| \iint_{\Omega^2} \frac{1}{ b(x)} K(x,y)\sqrt{b(x)b(y)}(b\weps)(x)(b\weps)(y) \,\dd x\dd y\Big|\\
=&
\Big| \iint_{\Omega^2} \Big(\frac{1}{ b(x)}-\frac{1}{ b(y)}\Big) K(x,y)\sqrt{b(x)b(y)}(b\weps)(x)(b\weps)(y) \,\dd x\dd y\Big|\\
\leq& C
\end{align*}
 where we have used as usual the mean value theorem \eqref{MVT}.
 
 For the second term, we compute
\begin{align*}
4\pi \Big|\nabla b(z_\eps)\cdot \int_{\Omega} v_{\varepsilon,L}(x)&(b\weps)(x)\,\dd x \Big|
=\Big| \nabla b(z_\eps)\cdot \int_{\Omega} \frac{\nabla^\perp b(x)}{b^2(x)}\psi_{\eps}(x)(b\weps)(x)\,\dd x \Big|\\
&=\Big| \nabla b(z_\eps)\cdot \int_{\Omega} \frac{\nabla^\perp b(x)-\nabla^\perp b(z_{\varepsilon})}{b^2(x)} \psi_{\eps}(x)(b\weps)(x)\,\dd x \Big| \\
&\leq C\lne \int_{\Omega}|x-z_\eps|(b\weps)(x)\,\dd x \leq C \lne \sqrt{\Ie},
\end{align*}
where we have used Lemma~\ref{lem:est-psi}, the mean value inequality \eqref{MVT} and that $z_{\varepsilon}\in \cal C_{\rho_b}\subset \Omega_{r_{0}}$ for $t\in [0,T_{\varepsilon}')$. We have also used that $\supp (\omega_\eps(t,\cdot))\subset \Omega_{r_0}$ on $[0,T_\eps)$ with $\inf_{\Omega_{r_0}} b>0$.

The bound for last term follows immediately from \eqref{bound:vR} and the properties of $F_{\eps}$, see Assumption~\ref{ass: F},
\[
\Big|\int_{\Omega} (v_{\varepsilon,R}+F_{\varepsilon})(x)(b\weps)(x)\,\dd x \Big|\leq C
\]
Combining the previous three estimates, we infer that
\[
\Big|\frac{\dd}{\dd t} b(z_\eps(t)) \Big| \leq C\Big( \sqrt{\Ie} +\frac1\lne\Big) \quad \forall t\in [0,T_{\varepsilon}').
\]

Finally, we observe that 
$$
z_\eps(0)=\frac{1}{\gamma_{\varepsilon}}\int_{\Omega} x (b\omega_\eps^0)(x)\,\dd x=z^0+\frac{1}{\gamma_{\varepsilon}}\int_{\Omega} (x-z^0) (b\omega_\eps^0)(x)\,\dd x
$$
 therefore $|b(z_\eps(0))-b(z^0)|\leq C |z_\eps(0)-z^0|\leq C\eps $ by virtue of \eqref{MVT}. The conclusion follows.
\end{proof}

Next, we derive an estimate on the time derivative of the momentum $\Ie$ as defined in \eqref{def:momentum}.
\begin{proposition}\label{prop:momentum} 
Under Assumption~\ref{ass: main part}, there exists $C>0$ such that for all $t\in [0,T_\eps')$
\begin{equation*}
\frac{\dd}{\dd t} \Ie \leq C\Ie +\frac{C}{\lne}.
\end{equation*}
\end{proposition}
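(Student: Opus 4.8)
The plan is to differentiate $\Ie$ in time, reduce via the weak formulation of \eqref{eq:transport-1} to a single space integral against the velocity, and then treat each piece of the Hodge decomposition \eqref{eq:decomposition} separately. Working in $\cal D'(0,T_\eps')$ and arguing exactly as in the proof of Lemma~\ref{lemma:J} (with the smooth, hence admissible, test function $\Phi(t,x)=|x-z_\eps(t)|^2$), the contribution of $\partial_t\Phi=-2(x-z_\eps(t))\cdot\dot z_\eps(t)$ integrates to $-2\dot z_\eps(t)\cdot\int_\Omega(x-z_\eps(t))(b\weps)(t,x)\,\dd x=0$, because this first moment vanishes identically by the very definition \eqref{def:point-vortex} of $z_\eps$ as the center of mass. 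Only the transport contribution survives, and with the velocity $\frac1\lne(\veps+F_\eps)$ one obtains
\[
\frac{\dd}{\dd t}\Ie(t)=\frac{2}{\lne}\int_\Omega (x-z_\eps(t))\cdot(\veps+F_\eps)(t,x)(b\weps)(t,x)\,\dd x.
\]
I would then insert $\veps=v_{\eps,K}+v_{\eps,L}+v_{\eps,R}$ and estimate the four resulting pieces, keeping in mind that on $[0,T_\eps')$ both $z_\eps(t)$ and $\supp\weps(t,\cdot)$ lie in $\cal C_{\rho_b}\subset\Omega_{r_0}$, where $\inf_{\Omega_{r_0}}b>0$ and $\nabla^\perp b/b^2$ is Lipschitz.

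The singular term $v_{\eps,K}$ is handled by the skew-symmetry of $K$, as already used in Lemma~\ref{lemma:J} and Lemma~\ref{lemma:b(z)}. Symmetrizing the double integral in $x,y$ against the weight $\sqrt{b(y)/b(x)}$, the diagonal contribution $b(x)(x-y)\cdot K(x,y)=0$ drops out since $(x-y)\cdot(x-y)^\perp=0$, leaving an integrand bounded by $\frac{|b(y)-b(x)|}{\sqrt{b(x)b(y)}}\,|x-z_\eps|\,|K(x,y)|\leq C|x-z_\eps|$ on $\Omega_{r_0}$ by the mean value theorem \eqref{MVT}. Hence this term is at most $C\gamma_\eps\int_\Omega|x-z_\eps|(b\weps)\,\dd x\leq C\sqrt{\Ie}$ by Cauchy--Schwarz, so after the prefactor $\tfrac1\lne$ it contributes $O(\sqrt{\Ie}/\lne)$. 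The remainder and external field $v_{\eps,R}+F_\eps$ are even easier: both are bounded on $\cal C_{\rho_b}$ by \eqref{bound:vR} and Assumption~\ref{ass: F}, so they contribute at most $\frac{C}{\lne}\int_\Omega|x-z_\eps|(b\weps)\,\dd x\leq C\sqrt{\Ie}/\lne$. Young's inequality turns each of these into $\tfrac C2\Ie+\frac{C}{\lne}$ and they are absorbed.

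The genuinely delicate term is $v_{\eps,L}$: since $\psi_\eps$ is of size $\lne$ by Lemma~\ref{lem:est-psi}, the crude bound $|v_{\eps,L}|\,|x-z_\eps|\lesssim\lne\,|x-z_\eps|$ would only give $\frac{\dd}{\dd t}\Ie\lesssim\sqrt{\Ie}$, producing an $O(1)$ rather than the required $O(1/\lne)$. To gain the missing smallness I would first freeze the coefficient, writing $\frac{\nabla^\perp b(x)}{b^2(x)}=\frac{\nabla^\perp b(z_\eps)}{b^2(z_\eps)}+O(|x-z_\eps|)$ via \eqref{MVT}; the error term, combined with $|\psi_\eps|\leq C\lne$, contributes $C\lne\int_\Omega|x-z_\eps|^2(b\weps)\,\dd x=C\lne\,\Ie$, which after the $\tfrac1\lne$ prefactor is exactly $C\,\Ie$. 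For the frozen coefficient I would replace $\psi_\eps$ by its weighted average $\bar\psi_\eps:=\gamma_\eps^{-1}\int_\Omega\psi_\eps(b\weps)\,\dd y$: the average part is annihilated by the center-of-mass identity $\int_\Omega(x-z_\eps)(b\weps)\,\dd x=0$, while the fluctuation is controlled by Cauchy--Schwarz together with Lemma~\ref{lemma:L2}, giving $\big|\int_\Omega(x-z_\eps)(\psi_\eps-\bar\psi_\eps)(b\weps)\,\dd x\big|\leq\sqrt{\Ie}\,(C\lne)^{1/2}$. After the prefactor this is $C\sqrt{\Ie}/\sqrt{\lne}$, which Young's inequality converts into $\tfrac C2\Ie+\frac{C}{\lne}$. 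Collecting the four contributions yields $\frac{\dd}{\dd t}\Ie\leq C\,\Ie+\frac{C}{\lne}$.

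The main obstacle is precisely this $v_{\eps,L}$ estimate: one must simultaneously exploit the vanishing of the first moment at the center of mass and the sharp weighted $L^2$ fluctuation bound of Lemma~\ref{lemma:L2} to beat the apparent $\lne$ growth of $\psi_\eps$, and it is the freezing of $\nabla^\perp b/b^2$ at $z_\eps$ that turns the leftover into a term proportional to $\Ie$ rather than an uncontrolled constant. Every other contribution is routine once the skew-symmetry of $K$ and the uniform bounds on $\cal C_{\rho_b}$ are in place.
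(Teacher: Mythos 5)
Your proof is correct and takes essentially the same route as the paper: differentiate $\Ie$ through the weak formulation, decompose $\veps$ via \eqref{eq:decomposition}, kill $v_{\eps,K}$ by skew-symmetry of $K$ and the mean value theorem \eqref{MVT}, bound $v_{\eps,R}+F_\eps$ uniformly, and for the critical $v_{\eps,L}$ term freeze $\nabla^\perp b/b^2$ at $z_\eps$ (the error giving $C\lne\,\Ie$, absorbed by the $1/\lne$ prefactor) and control the fluctuation of $\psi_\eps$ about its weighted mean by Cauchy--Schwarz and Lemma~\ref{lemma:L2}. The only cosmetic difference is that the paper implements the center-of-mass cancellation by substituting the expression \eqref{eq:derivee-z} for $\dot z_\eps$ into the (vanishing) term $-2\dot z_\eps\cdot\int_\Omega(x-z_\eps)(b\weps)\,\dd x$, turning the $v_{\eps,L}$ contribution into a double integral of $v_{\eps,L}(x)-v_{\eps,L}(y)$, whereas you subtract the average of $\psi_\eps$ directly; the two manipulations are identical in substance.
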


\begin{proof}Using the weak formulation of \eqref{eq:transport-1} in Proposition~\ref{prop:well-posed}, we compute the time derivative of $\Ie$ defined in \eqref{def:momentum}
\begin{align*}
\frac{\dd}{\dd t}\Ie(t) =&2\int_{\Omega} (x-z_\eps(t))\cdot \left(\frac{(v_\eps+F_{\varepsilon})(x)}{\lne}-\dot{z}_\eps(t)\right)(b\weps)(x)\,\dd x\\
=&\frac2\lne\int_{\Omega} (x-z_\eps)\cdot v_{\varepsilon,K}(x) (b\weps)(x)\,\dd x\\
&+\frac2\lne\int_{\Omega} (x-z_\eps)\cdot v_{\varepsilon,L}(x) (b\weps)(x)\,\dd x\\
&+\frac2\lne\int_{\Omega} (x-z_\eps)\cdot (v_{\varepsilon,R}+F_{\varepsilon})(x) (b\weps)(x)\,\dd x\\
&-2 \dot{z}_\eps \cdot \int_{\Omega} (x-z_\eps) (b\weps)(x)\,\dd x,
\end{align*}
where we have decomposed $\veps$ by means of \eqref{eq:decomposition}.

We treat the first term as in Lemma~\ref{lemma:b(z)}, namely by skew-symmetry of $K$, to obtain 
\begin{align*}
4\pi \int_{\Omega} &(x-z_\eps)\cdot v_{\varepsilon,K}(x) (b\weps)(x)\,\dd x \\
=&2\iint_{\Omega^2} (x-z_\eps)\cdot \frac{1}{ b(x)} K(x,y)\sqrt{b(x)b(y)}(b\weps)(x)(b\weps)(y) \,\dd x\dd y\\
=&\iint_{\Omega^2} (x-z_\eps)\cdot \frac{1}{ b(x)} K(x,y)\sqrt{b(x)b(y)}(b\weps)(x)(b\weps)(y) \,\dd x\dd y\\
&-\iint_{\Omega^2} (y-z_\eps)\cdot \frac{1}{ b(y)} K(x,y)\sqrt{b(x)b(y)}(b\weps)(x)(b\weps)(y) \,\dd x\dd y\\
=& \iint_{\Omega^2} (x-y)\cdot \frac{1}{ b(x)} K(x,y)\sqrt{b(x)b(y)}(b\weps)(x)(b\weps)(y) \,\dd x\dd y\\
&-\iint_{\Omega^2} (y-z_\eps)\cdot \Big(\frac{1}{ b(y)}-\frac{1}{ b(x)} \Big)K(x,y)\\
&\hspace{4cm}\times \sqrt{b(x)b(y)}(b\weps)(x)(b\weps)(y) \,\dd x\dd y\\ 
\end{align*}
which is bounded by a constant because of the mean value theorem \eqref{MVT} and $x,y\in \cal C_{\rho_b}$ for all $x,y\in \supp(\weps(t,\cdot))$ with $t\in [0,T_{\eps}).$

We treat the second term at the end of this proof. The bound for the third term easily follows from \eqref{bound:vR} and Assumption~\ref{ass: F}:
\[
\Big|\int_{\Omega} (x-z_\eps)\cdot (v_{\varepsilon,R}+F_{\varepsilon})(x) (b\weps)(x)\,\dd x \Big|\leq C.
\]

Following the proof of Lemma~\ref{lemma:b(z)} we compute $\dot{z}_\eps(t)$ by
\begin{align*}
\lne \dot{z}_\eps(t)=&\frac{1}{\gamma_{\varepsilon}} \int_{\Omega} (v_{\varepsilon}+F_{\varepsilon})(x)(b\weps)(x)\,\dd x \\
=&\frac{1}{\gamma_{\varepsilon}} \int_{\Omega} v_{\varepsilon,K}(x)(b\weps)(x)\,\dd x \\
&+\frac{1}{\gamma_{\varepsilon}} \int_{\Omega} v_{\varepsilon,L}(x)(b\weps)(x)\,\dd x \\
&+\frac{1}{\gamma_{\varepsilon}} \int_{\Omega} (v_{\varepsilon,R}+F_{\varepsilon})(x)(b\weps)(x)\,\dd x.
\end{align*}
The first and third term on the right-hand-side are bounded by a constant as in the proof of Lemma~\ref{lemma:b(z)}, namely by exploiting the skew-symmetry of $K$, the distance $r_{0}$ in Remark~\ref{rem:r0}, \eqref{bound:vR} for $v_{\varepsilon,R}$ and Assumption~\ref{ass: F} for $F_{\varepsilon}$. It follows
\begin{equation}\label{eq:derivee-z}
\lne \dot{z}_\eps(t)=\frac{1}{\gamma_{\varepsilon}}\int_{\Omega} v_{\eps,L}(x)(b\weps)(x)\,\dd x + \mathcal{O}(1).
\end{equation}

Combining this equality with the two estimates above, we get
\begin{align*}
\frac{\dd}{\dd t}\Ie(t) =&\frac2\lne\int_{\Omega} (x-z_\eps)\cdot v_{\varepsilon,L}(x) (b\weps)(x)\,\dd x +\mathcal{O}\Big(\frac1\lne\Big)\\
&-\frac2\lne \Big(\frac{1}{\gamma_{\varepsilon}}\int_{\Omega} v_{\eps,L}(x)(b\weps)(x)\,\dd x \Big) \cdot \Big(\int_{\Omega} (x-z_\eps) (b\weps)(x)\,\dd x\Big)\\
=&\frac2{\gamma_{\varepsilon}\lne} \iint_{\Omega^2} ( v_{\eps,L}(x)- v_{\eps,L}(y)) \cdot (x-z_\eps) (b\weps)(x)(b\weps)(y)\,\dd x\dd y \\
&+\mathcal{O}\Big(\frac1\lne\Big)\\
=&\frac1{2\pi\gamma_{\varepsilon}\lne}\iint_{\Omega^2} \left(\frac{\nabla^\perp b(x)}{ b^2(x)}\psi_\eps(x)-\frac{\nabla^\perp b(y)}{b^2(y)}\psi_\eps(y)\right) \cdot (x-z_\eps) \\
&\hspace{4cm} \times(b\weps)(y) (b\weps)(x)\,\dd x\dd y +\mathcal{O}\Big(\frac1\lne\Big).
\end{align*}
Introducing $b(z_\eps)$ we rewrite this formula as follows
\begin{align*}
2\pi\gamma_{\varepsilon}&\lne\frac{\dd}{\dd t}\Ie(t) \\
=&\gamma_{\varepsilon}\int_{\Omega} \left(\frac{\nabla^\perp b(x)}{ b^2(x)}-\frac{\nabla^\perp b(z_\eps)}{b^2(z_\eps)}\right) \cdot (x-z_\eps)\psi_\eps(x)(b\weps)(x)\,\dd x\\
&+\gamma_{\varepsilon}\int_{\Omega} \frac{\nabla^\perp b(z_\eps)}{b^2(z_\eps)} \cdot (x-z_\eps)\psi_\eps(x)(b\weps)(x)\,\dd x\\
&-\iint_{\Omega^2} \left(\frac{\nabla^\perp b(y)}{b^2(y)}-\frac{\nabla^\perp b(z_{\varepsilon})}{ b^2(z_{\varepsilon})} \right) \cdot (x-z_\eps)\psi_\eps(y)(b\weps)(y) (b\weps)(x)\,\dd x\dd y \\
&-\iint_{\Omega^2} \frac{\nabla^\perp b(z_{\varepsilon})}{ b^2(z_{\varepsilon})} \cdot (x-z_\eps)\psi_\eps(y)(b\weps)(y) (b\weps)(x)\,\dd x\dd y +\cal{O}(1).
\end{align*}
Recalling Lemma~\ref{lem:est-psi} for $\psi_\eps$ and that $\supp(\weps(t,\cdot))\subset \cal C_{\rho_b}\subset \Omega_{r_0}$ for any $t\in [0,T_{\eps})$ by Assumption~\ref{ass: main part} and $z_{\eps}(t)\in \Omega_{r_0}$ for all $t\in[0,T_{\eps}')$, we have 
\begin{align*}
&\gamma_{\varepsilon}\Big| \int_{\Omega} \left(\frac{\nabla^\perp b(x)}{ b^2(x)}-\frac{\nabla^\perp b(z_\eps)}{b^2(z_\eps)}\right) \cdot (x-z_\eps)\psi_\eps(x)(b\weps)(x)\,\dd x\Big| \\
&+ \Big| \iint_{\Omega^2} \left(\frac{\nabla^\perp b(y)}{b^2(y)}-\frac{\nabla^\perp b(z_{\varepsilon})}{ b^2(z_{\varepsilon})} \right) \cdot (x-z_\eps)\psi_\eps(y)(b\weps)(y) (b\weps)(x)\,\dd x\dd y \Big| \\
&\leq C \|\nabla b^{-1}\|_{L^\infty(\Omega_{r_{0}/2})}\Big( \int_{\Omega} |x-z_\eps|^2 |\psi_\eps(x)|
 (b\weps)(x)\,\dd x\\
 &\hspace{3cm}+ \iint_{\Omega^2} |x-z_\eps| |y-z_\eps| |\psi_\eps(y)| (b\weps)(y) (b\weps)(x)\,\dd x\dd y \Big)\\
&\leq C \lne \Ie.
 \end{align*}
For the other terms, we invoke Lemma~\ref{lemma:L2} to conclude
\begin{align*}
\Bigg|\int_{\Omega}&\frac{\nabla^\perp b(z_\eps)}{b^2(z_\eps)} \cdot (x-z_\eps) \Big( \gamma_{\varepsilon} \psi_\eps(x) 
 -\int_{\Omega} \psi_\eps(y)(b\weps)(y)\,\dd y \Big) (b\weps)(x)\,\dd x \Bigg|\\
&\leq C\sqrt{\Ie} \left(\int_{\Omega}\left| \gamma_{\varepsilon}\psi_\eps(x)-\int_{\Omega}\psi_\eps(y)(b\weps)(y)\,\dd y\right|^2(b\omega_\eps)(x) \,\dd x\right)^{1/2}\\
&\leq C\sqrt{\Ie} \lne^{1/2} \leq C\lne \Ie(t) +C .
\end{align*}

We have readily found
\[
\frac{\dd}{\dd t}\Ie(t) \leq C\Ie(t) +\frac{C}\lne
\]
which ends the proof.
\end{proof}

This finally enables us to prove the following estimate on the moment of inertia.
\begin{proposition}\label{prop:momentum-bis} 
Under Assumption~\ref{ass: main part}, there exists $\varepsilon_{0}>0$ depending only on $T$, $b$, $\Omega$, $(z_i^0)_{i=1,\dots, {N_v}}$, $(\gamma_i)_{i=1,\dots, {N_v}}$, $(\Gamma^i)_{i=1,\dots, {N_{is}}}$, $M_0$, such that for $\eps \leq \eps_0$ it holds $T'_\eps=T_\eps$ with $T_{\eps}$ and $T_{\eps}'$ defined in \eqref{def:maxi-T} and \eqref{def:maxi-T'} respectively.
Moreover, there exists $C>0$ such that 
\begin{equation*}
|b(z_\eps(t))-b(z^0)|^2+\Ie(t)\leq \frac{C}{\lne},\quad \forall t\in [0,T_\eps).
\end{equation*}
\end{proposition}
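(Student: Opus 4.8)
The plan is to assemble the two differential estimates already established---the Gr\"onwall-type inequality for the moment of inertia in Proposition~\ref{prop:momentum} and the control of $b(z_\eps)$ in Lemma~\ref{lemma:b(z)}---and then to upgrade the resulting bounds, which a priori hold only on $[0,T_\eps')$, to the whole interval $[0,T_\eps)$ by a continuity argument proving $T_\eps'=T_\eps$.

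First I would integrate the inequality $\frac{\dd}{\dd t}\Ie \le C\Ie + C/\lne$ from Proposition~\ref{prop:momentum}. Since initially $\Ie(0)\le 4\gamma_\eps M_0^2\eps^2$ and $\eps^2\lne\to 0$, we have $\Ie(0)\le C/\lne$ for $\eps$ small; Gr\"onwall's lemma on $[0,T_\eps')$, together with $T_\eps'\le T$, then gives a constant $C$ depending only on the admissible parameters and $T$ with
\[
\Ie(t)\le e^{CT}\Ie(0)+\frac{e^{CT}-1}{\lne}\le \frac{C}{\lne},\qquad t\in[0,T_\eps').
\]
Inserting this into Lemma~\ref{lemma:b(z)} and using $\eps\le 1/\sqrt{\lne}$ for $\eps$ small yields
\[
|b(z_\eps(t))-b(z^0)|\le CT\Big(\tfrac{C}{\sqrt{\lne}}+\tfrac1\lne\Big)+C\eps\le \frac{C}{\sqrt{\lne}},\qquad t\in[0,T_\eps'),
\]
and squaring gives $|b(z_\eps(t))-b(z^0)|^2\le C/\lne$ on $[0,T_\eps')$. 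This already establishes the quantitative estimate on $[0,T_\eps')$; it remains only to identify $T_\eps'$ with $T_\eps$.

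For this I would argue by continuity. The map $t\mapsto z_\eps(t)$ is continuous because $b\weps\in C([0,T];L^1(\Omega))$ by Proposition~\ref{prop:transport}, hence so is $t\mapsto b(z_\eps(t))$. The initial center $z_\eps(0)$ lies within $M_0\eps$ of $z^0$, which is interior to $\cal C_{\rho_b}$ (it satisfies $|b-b(z^0)|=0<\rho_b$), so $z_\eps(0)\in\cal C_{\rho_b}$ and $T_\eps'>0$ for $\eps$ small. The key geometric observation is that, $\cal C_{\rho_b}$ being a connected component of the closed set $\{|b-b(z^0)|\le\rho_b\}$, any of its points with $|b-b(z^0)|<\rho_b$ is interior, so $\partial\cal C_{\rho_b}\subset\{|b-b(z^0)|=\rho_b\}$. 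Now suppose $T_\eps'<T_\eps$. Since $z_\eps(t)\in\cal C_{\rho_b}$ (compact) for $t<T_\eps'$, continuity forces $z_\eps(T_\eps')\in\cal C_{\rho_b}$, while the maximality defining $T_\eps'$ in \eqref{def:maxi-T'} forces $z_\eps$ to reach $\partial\cal C_{\rho_b}$ at $t=T_\eps'$, whence $|b(z_\eps(T_\eps'))-b(z^0)|=\rho_b$. But passing to the limit $t\uparrow T_\eps'$ in the bound of the previous paragraph gives $|b(z_\eps(T_\eps'))-b(z^0)|\le C/\sqrt{\lne}<\rho_b$ for $\eps$ small enough, a contradiction. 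Hence $T_\eps'=T_\eps$, and the estimates hold on $[0,T_\eps)=[0,T_\eps')$, which is the claim.

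I expect the analytic substance to be entirely contained in the two preceding results, so the only genuine difficulty here is this final bootstrap: closing the loop between the a priori estimates valid on $[0,T_\eps')$ and the maximality of $T_\eps'$. The delicate point is the topological fact that the center of mass $z_\eps$---which need not even lie in $\Omega$ without extra geometric assumptions---can leave the particular connected component $\cal C_{\rho_b}$ only by crossing the level set $\{|b-b(z^0)|=\rho_b\}$, which is precisely what the strong control $|b(z_\eps)-b(z^0)|^2\le C/\lne$ rules out.
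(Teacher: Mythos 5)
Your proposal is correct and follows essentially the same route as the paper: Gr\"onwall applied to the differential inequality of Proposition~\ref{prop:momentum} with $\Ie(0)\leq C\eps^2$, insertion of the resulting bound into Lemma~\ref{lemma:b(z)}, and then the observation that the bound $|b(z_\eps(t))-b(z^0)|\leq C/\sqrt{\lne}\ll\rho_b$ prevents the (continuous) center $z_\eps$ from reaching the part of $\cal C_{\rho_b}$ where $|b-b(z^0)|=\rho_b$, so that maximality forces $T_\eps'=T_\eps$. Your explicit topological argument that $\partial\cal C_{\rho_b}\subset\{|b-b(z^0)|=\rho_b\}$ is simply a spelled-out version of the paper's appeal to Remark~\ref{rem:r0} and the inclusion $z_\eps(t)\in\cal C_{\rho_b/2}$.
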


\begin{proof}
We have proved in Proposition~\ref{prop:momentum}, that for $t\in [0,T_\eps')$
\[
\frac{\dd}{\dd t} \Ie(t)\leq C \Ie +\frac{C}{\lne}
\]
and recalling that $\Ie(0)\leq C \eps^2$ we obtain 
\begin{equation*}
\Ie(t)\leq \frac{C_0}{\lne},\quad \forall t\in [0,T_\eps')
\end{equation*}
 for some constant $C_0$ depending only on $T$, where $T_\eps'\leq T_{\eps}\leq T$.

By Lemma~\ref{lemma:b(z)}, we get for $t\in [0,T_\eps')$ that
\begin{equation*}
|b(z_\eps(t))-b(z^0)| \leq \frac{C}{\sqrt{\lne}}.
\end{equation*}
Consequently, by the definition of $\cal C_{\rho_b}$, see Remark~\ref{rem:r0}, together with the definition of $T'_\eps$, see \eqref{def:maxi-T'}, we conclude that for $\varepsilon_{0}$ small enough, this inequality implies that $z_\eps(t)\in \cal C_{\rho_b/2}$ for all $t\in [0,T_{\eps}')$, which implies that $T'_\eps=T_\eps$ and the desired estimate holds on $[0,T_\eps)$.
\end{proof}

The first important consequence of Proposition~\ref{prop:momentum-bis} is the following weak localization property.
\begin{proposition}\label{prop:weak-loc}
Under Assumption~\ref{ass: main part}, there exists $C>0$ such that for all $\eps \leq \eps_0$ (defined in Proposition~\ref{prop:momentum-bis}), we have for all $t\in [0,T_\eps)$
\[
\int_{\Omega\setminus B(z_\eps(t), R_\eps)} (b\omega_\eps)(t,x)\,\dd x\leq \frac{C}{\ln \lne},
\]
where 
$$
R_\eps=\left(\frac{\ln \lne}{\lne}\right)^{1/2}.
$$
\end{proposition}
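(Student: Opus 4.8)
The plan is to obtain this weak localization estimate directly from the moment of inertia bound established in Proposition~\ref{prop:momentum-bis}, via an elementary Chebyshev-type truncation. Recall that
\[
\Ie(t)=\int_{\Omega} |x-z_\eps(t)|^2 (b\weps)(t,x)\,\dd x,
\]
and that Proposition~\ref{prop:momentum-bis} guarantees $\Ie(t)\leq C/\lne$ for all $t\in[0,T_\eps)$ once $\eps\leq\eps_0$. Since we may assume $\omega_\eps\geq 0$ (see the remark reducing the analysis to $\delta=1$), the integrand $b\weps$ is nonnegative, which is exactly what makes the truncation argument below legitimate.

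The first step is to restrict the moment of inertia integral to the exterior region. For every $x\in \Omega\setminus B(z_\eps(t),R_\eps)$ one has $|x-z_\eps(t)|^2\geq R_\eps^2$, whence
\[
R_\eps^2 \int_{\Omega\setminus B(z_\eps(t),R_\eps)} (b\weps)(t,x)\,\dd x \leq \int_{\Omega\setminus B(z_\eps(t),R_\eps)} |x-z_\eps(t)|^2 (b\weps)(t,x)\,\dd x \leq \Ie(t).
\]
Combining this with $\Ie(t)\leq C/\lne$ yields
\[
\int_{\Omega\setminus B(z_\eps(t),R_\eps)} (b\weps)(t,x)\,\dd x \leq \frac{C}{\lne\, R_\eps^2}.
\]

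The final step is to insert the explicit radius $R_\eps^2=\ln\lne/\lne$, which is calibrated precisely so that the two factors of $\lne$ cancel: $\lne\, R_\eps^2=\ln\lne$, giving the asserted bound $C/\ln\lne$. I do not anticipate any genuine obstacle here, as the proposition is an immediate corollary of the moment estimate already proven; the only point requiring attention is the bookkeeping of the logarithmic factors. Conceptually, the choice of $R_\eps$ is dictated by balancing two competing requirements: one wants $R_\eps\to 0$ so that the disk shrinks to a point, while simultaneously keeping $\lne\, R_\eps^2\to\infty$ so that the exterior mass tends to zero. The scale $R_\eps^2=\ln\lne/\lne$ is the natural choice realizing both, with the residual divergence rate $\ln\lne$ quantifying the (weak) concentration of the vorticity around its center of mass $z_\eps(t)$.
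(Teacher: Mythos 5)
Your proof is correct and takes exactly the same route as the paper: the paper's proof is the one-line Chebyshev bound $\int_{\Omega\setminus B(z_\eps(t),R_\eps)}(b\weps)(t,x)\,\dd x\leq \Ie(t)/R_\eps^2$ followed by the estimate $\Ie(t)\leq C/\lne$ from Proposition~\ref{prop:momentum-bis}. Your additional remarks on the calibration of $R_\eps$ and the nonnegativity of $\weps$ are accurate but not needed beyond what the paper records.
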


\begin{proof}
We simply write
\[
\int_{\Omega\setminus B(z_\eps(t), R_\eps)} (b\omega_\eps)(t,x)\,\dd x\leq \frac{1}{R_\eps^2}\Ie(t),
\]
and the conclusion follows from Proposition~\ref{prop:momentum-bis}.
\end{proof}

\begin{remark}\label{rem:momemtum}
It follows from Proposition~\ref{prop:weak-loc} that the vorticity $\weps(t)$ remains weakly localized close to the center of vorticity $z_\eps(t)$ up to time $T_{\eps}$ defined in \eqref{def:maxi-T}. In order to prove the weak localization property stated in Theorem~\ref{thm: main red system}, it remains to identify the limiting trajectory and to show that $T_{\eps}=T$. We refer to Section~\ref{sec:trajectory} for the former item, the latter requires a strong localization of $\weps(t)$, namely that the support of $\weps(t)$ is sharply localized. While the initial data satisfy $\supp(\weps^0)\subset B(z^0,\eps M_0)$ such a property is not expected to hold for positive times, see the introduction for the filamentation phenomenon. The term $v_{\eps,L}$ defined in \eqref{eq:uL} which accounts for the limiting dynamics of the point vortices lacks suitable regularity properties and does not allow for an adaptation of the classical method \cite{Marchioro-Pulvirenti-book, Marchioro-Pulvirenti} developed for point vortices for 2D Euler equations. For the axisymmetric 3D Euler equations without swirl, which can be seen as a special case of \eqref{eq:lake} with $b(z,r)=r$, the authors of \cite{Mar2} were able to obtain the result only up to short times, due to this difficulty. In \cite{Mar3}, this obstacle has been overcome by arguing that a strong localization property in the radial direction suffices to show that $T_{\eps}=T$. 

For the lake equations, this translates to seeking a strong localization property in the direction of the steepest ascent of $b$, namely in the direction of $\nabla b$ which motivates the level-set approach chosen in this paper. We will refer to this direction as transverse direction. Note that $v_{\eps,L}$ is orthogonal to that direction. In view of the definition \eqref{def:maxi-T} of $T_{\eps}$, this in turn will enable us to infer $T_{\eps}=T$, see Proposition~\ref{prop:strong-loc}.

However, the method presented here differs from the one developed in \cite{Mar2, Mar3} for the axisymmetric 3D Euler equations without swirl in several aspects. We need to deal with additional difficulties arising close to the boundary due to both the geometry of $\Omega$ and the degeneracy of $b$. Further, we do not rely on an explicit Biot-Savart law type formula which is not available for \eqref{eq:divcurl} but extensively used in \cite{Mar2, Mar3}. Moreover, we have provided a simplified proof of the weak localization property that in particular does not require bounds on second order moments as in \cite[Lemma 4.1]{Mar3}. 

The strong localization property of the vorticity in the transverse direction is the main objective of Section~\ref{sec:transverse}. We refer the reader to Remark~\ref{rem:strong-loc} for a comparison of the proofs of strong localization property in the present paper and \cite{Mar3}.
 \end{remark}

\section{Strong localization in the transverse direction}\label{sec:transverse}

As in the previous sections, for $T>0$ and under Assumption~\ref{ass: main part} we denote by $(\omega_\eps, v_\eps)$ the unique weak solution of \eqref{eq:transport-1} in the sense of Proposition~\ref{prop:well-posed} on $[0,T]$.

\medskip

The aim of the present section consists in proving a strong localization property in the transverse direction. This in turn will enable us to state a global result, i.e. that $T_{\eps}=T$ for $T>0$ arbitrarily large, which could not be inferred from the weak localization property alone, see Remark~\ref{rem:momemtum}.
To that end, we introduce the moment of inertia in the transverse direction 
\[
\mathcal{K}_\eps(t)=\int_{\Omega}| b(x)-b(z^0)|^2 b(x)\omega_\eps(t,x)\,\dd x.
\]
Thanks to Lemma~\ref{lemma:J} we immediately obtain
\begin{equation}\label{eq:est-K}
\mathcal{K}_\eps(t)\leq \frac{C}{\lne}, \quad \forall t\in [0,T_\eps)
\end{equation}
by developing the square.

To prove the strong localization property, we follow the strategy developed by {\sc C. Marchioro} and {\sc M. Pulvirenti} \cite{Marchioro-Pulvirenti-book} in the planar case and subsequently adapted to the 3D axisymmetric Euler equation without swirl see e.g. \cite{Mar2,Mar3}. The main difference to the original proof in \cite{Marchioro-Pulvirenti-book} is that all the quantities are defined in the transverse direction only, see Remark~\ref{rem:momemtum}. This localization in one direction of the respective quantities is the key ingredient in \cite{Mar3} which allows the authors to upgrade the local-in-time result of \cite{Mar2} to a global result \cite{Mar3}. 

Indeed, when arbitrary directions are considered, the term $v_{\varepsilon,L}$ defined in \eqref{eq:uL} displays a diverging (in $\eps$) term that lacks a suitable control. The projection on the transverse direction avoids the presence of $v_{\varepsilon,L}$.
We start by adapting the estimates in the radial direction in \cite{Mar3} to obtain a strong localization in the transverse direction for $\weps$. 

\begin{lemma}\label{lemma:Rt}
 Under Assumption~\ref{ass: main part}, we define
 \[
 R_{t}:=\max \left\{ | b(x) - b(z^0) |\:| x\in \supp \omega_{\varepsilon}(t,\cdot) \right\}.
 \]
 For any $t\in (0,T_{\varepsilon}]$, we consider $x_{0} \in \supp \omega_{\varepsilon}^0$ such that at time $t$, we have
 \[
 |b(X_{\varepsilon}(t,x_{0})) - b(z^0) |=R_{t}.
 \]
 Then, at this time $t$,
 \[
 \frac{\dd}{\dd t} | b(X_{\varepsilon}(t,x_{0})) - b(z^0) | \leq \frac{C}{|\ln \varepsilon|}+ \frac{C}{R_{t}|\ln \varepsilon|} + \frac{C \sqrt{m_{t}(R_{t}/2)}}{\varepsilon |\ln\varepsilon|},
 \]
 where the function $m_{t}(\cdot)$ is defined by
 \[
 m_{t}(h):= \int_{A_{h}^c} (b\omega_{\varepsilon})(t,y)\,\dd y \text{ with }A_{h}:=\{y\in \Omega\, :\, |b(y) - b(z^0) |\leq h\}.
 \]
\end{lemma}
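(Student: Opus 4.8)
The natural starting point is to differentiate $b$ along the characteristic $t\mapsto X_\eps(t,x_0)$. Since $X_\eps(\cdot,x_0)\in C^1$ solves $\dot X_\eps=\frac1\lne(\veps+F_\eps)(t,X_\eps)$ by Proposition~\ref{prop:transport}, the chain rule gives, writing $X=X_\eps(t,x_0)$,
\[
\frac{\dd}{\dd t} b(X_\eps(t,x_0)) = \frac1\lne \nabla b(X)\cdot(\veps+F_\eps)(t,X),
\]
and, since $R_t>0$, the map $t\mapsto |b(X_\eps(t,x_0))-b(z^0)|$ is differentiable at the time under consideration with
\[
\frac{\dd}{\dd t}|b(X_\eps(t,x_0))-b(z^0)| \le \frac1\lne\big|\nabla b(X)\cdot(\veps+F_\eps)(t,X)\big|.
\]
I would then insert the Hodge decomposition \eqref{eq:decomposition}. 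The crucial structural observation is that the leading (diverging) field $v_{\eps,L}=\frac{\nabla^\perp b}{4\pi b^2}\psi_\eps$ from \eqref{eq:uL} is parallel to $\nabla^\perp b$, hence
\[
\nabla b(X)\cdot v_{\eps,L}(X)=0.
\]
This is exactly what projecting onto the transverse direction buys us: the term responsible for the $\mathcal{O}(\lne)$ tangential motion drops out entirely, so only $v_{\eps,K}$, $v_{\eps,R}$ and $F_\eps$ remain.

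The contribution of $v_{\eps,R}+F_\eps$ is immediate: by \eqref{bound:vR}, Assumption~\ref{ass: F}, and the boundedness of $\nabla b$ on $\cal C_{\rho_b}\subset\Omega_{r_0}$ (where $X\in\supp\weps(t,\cdot)$ lies on $[0,T_\eps)$), it is $\le C/\lne$, producing the first term. It remains to control $\frac1\lne\nabla b(X)\cdot v_{\eps,K}(X)$, where by \eqref{eq:uK}
\[
\nabla b(X)\cdot v_{\eps,K}(X)=\frac1{2\pi b(X)}\int_\Omega \nabla b(X)\cdot K(X,y)\,\sqrt{b(X)b(y)}\,(b\weps)(y)\,\dd y,
\]
with $|\nabla b(X)\cdot K(X,y)|\le |\nabla b(X)|/|X-y|$ and $\sqrt{b(y)/b(X)}$ bounded on $\Omega_{r_0}$. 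The plan is to split the $y$-integral using $A_{R_t/2}=\{|b(y)-b(z^0)|\le R_t/2\}$. On $A_{R_t/2}$ the kernel is non-singular: since $|b(X)-b(z^0)|=R_t$ while $|b(y)-b(z^0)|\le R_t/2$, the reverse triangle inequality gives $|b(X)-b(y)|\ge R_t/2$, and then the mean value inequality \eqref{MVT} applied to $b$ on $\Omega_{r_0}$ yields $|X-y|\ge c\,R_t$. Hence this piece is $\le \frac{C}{R_t}\int_\Omega(b\weps)(y)\,\dd y\le \frac{C}{R_t}$, which after division by $\lne$ gives the second term $\frac{C}{R_t\lne}$.

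On the complement $A_{R_t/2}^c$ the kernel is genuinely singular, and this is the main obstacle. Here I would discard signs and weights, bound $|\nabla b(X)\cdot K(X,y)|\le C/|X-y|$, and estimate $\int_{A_{R_t/2}^c}|X-y|^{-1}(b\weps)(y)\,\dd y$ by the rearrangement of the mass, Lemma~\ref{lem:rearrang0}: among all densities $f$ with $0\le f\le \|b\|_{L^\infty}M_0/\eps^2$ and total mass $m_t(R_t/2)=\int_{A_{R_t/2}^c}(b\weps)$, the decreasing singular integral is maximized by $f$ concentrated on a disk $B(X,r)$ with $\pi r^2\,\|b\|_{L^\infty}M_0/\eps^2=m_t(R_t/2)$. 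Since $\int_{B(X,r)}|X-y|^{-1}\,\dd y=2\pi r$, this yields a bound of order $\eps^{-2}\,r\lesssim \eps^{-1}\sqrt{m_t(R_t/2)}$, and dividing by $\lne$ produces the third term. The delicate point is precisely this interplay between the $L^\infty$ bound $0\le b\weps\le C\eps^{-2}$ and the small mass $m_t(R_t/2)$: it is the rearrangement inequality that converts the potentially unbounded singular integral into the controlled factor $\eps^{-1}\sqrt{m_t(R_t/2)}$, the very structure that will later be closed, together with the weak localization of Proposition~\ref{prop:weak-loc}, in an iteration on the spreading of the support. Collecting the three contributions yields the stated differential inequality.
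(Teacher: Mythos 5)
Your proposal is correct and follows essentially the same route as the paper's proof: differentiation along the flow, the key orthogonality $\nabla b\cdot v_{\eps,L}=0$, the $C/\lne$ bound on $v_{\eps,R}+F_\eps$, the splitting of the $v_{\eps,K}$ integral over $A_{R_t/2}$ (reverse triangle inequality plus \eqref{MVT}) and its complement. The only cosmetic difference is in the last step, where the paper invokes the standard interpolation bound $\int_S |h(y)|/|x-y|\,\dd y \leq C\|h\|_{L^1(S)}^{1/2}\|h\|_{L^\infty(S)}^{1/2}$, while you rederive the same estimate from the paper's own rearrangement Lemma~\ref{lem:rearrang0} with $g(s)=1/s$ and the transported bound $0\leq b\weps\leq C\eps^{-2}$ — the two justifications are equivalent.
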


\begin{remark}\label{rem:strong-loc}
While our method is inspired by the one in \cite{Mar2,Mar3}, we choose to consider simplified definitions of $R_{t}$ and $A_{h}$ compared to \cite{Mar3}. More precisely, the equivalent definitions to \cite{Mar3} would have led to consider $b(z_{\varepsilon}(t))$ instead to $b(z^{0})$ in the respective definitions of $R_{t}$ and $A_{h}$. However, the main objective is to prove that $\supp(\weps)$ is localized close to $\mathcal{C}$ being the connected component of $\{b(x)=b(z^0)\}$ containing $z^0$, see {\it (i)} Theorem~\ref{thm: main red system}. Considering these quantities turns out to be sufficient and simplifies the computation of the corresponding time derivatives.
\end{remark}

\begin{proof} Let $x\in \supp \omega_{\varepsilon}(t,\cdot)$ such that $R_t=| b(x) - b(z^0) |$. 
By Proposition~\ref{prop:transport} there exists $x_0=X_{\varepsilon}(0,t,x)\in \supp \omega_{\varepsilon}^0$ such that $x=X_{\varepsilon}(t,0,x_0)=X_{\varepsilon}(t,x_{0})$. 
We compute the derivative with respect to time for $s\in [0,t]$:
\begin{align*}
 \frac{\dd}{\dd s} | b(X_{\varepsilon}&(s,x_{0})) - b(z^0)) | \\
 = & \frac1{|\ln \varepsilon|}\Big(\nabla b(X_\eps(s,x_0)) \cdot (v_{\varepsilon}+F_{\varepsilon})(s,X_\eps(s,x_0)) \Big) \frac{ b(X_\eps(s,x_0)) - b(z^0) }{ | b(X_\eps(s,x_0)) - b(z^0) | }\\
 \leq& 
 \frac1{|\ln \varepsilon|} | \nabla b(X_\eps(s,x_0)) \cdot v_{\varepsilon,K}(s,X_\eps(s,x_0)) | 
 \\
 &+
 \frac1{|\ln \varepsilon|}| 
\nabla b(X_\eps(s,x_0)) \cdot (v_{\varepsilon,R}+F_{\varepsilon})(s,X_\eps(s,x_0))|,
\end{align*}
where the special form of $v_{\varepsilon,L}$ \eqref{eq:uL}, namely the orthogonality of $\nabla b$ and $v_{\varepsilon,L}$, is crucially used.

By \eqref{bound:vR} and Assumption~\ref{ass: F}, the second term in the right hand side at time $t$ is bounded by $C/|\ln \varepsilon|$ (since $X_\eps(s,x_0)\in \supp \omega_\eps(s,\cdot)\subset \cal C_{\rho_b}$ implies that $\nabla b(X_\eps(s,x_0))$ is bounded).

For the first term, we split the integral defining $v_{\varepsilon,K}(t,x)$ \eqref{eq:uK} on the subset $A_{R_{t}/2}$ and on the complementary of $A_{R_t/2}$ where we recall $A_{h}:=\{ |b(y) - b(z^0) |\leq h\}$. 

\medskip

On $A_{R_{t}/2}$, we use the mean value theorem \eqref{MVT} 
\[
\frac{R_{t}}2 \leq |b(x)-b(y)| \leq a\|\nabla b\|_{L^\infty(\Omega_{r_{0}/2})} |x-y| 
\] 
to get
\[
\Bigg| \frac{\nabla b(x)}{b(x)} \cdot \int_{A_{R_{t}/2}} K(x,y)\sqrt{b(x)b(y)} (b\omega_{\varepsilon})(y) \, dy \Bigg|\leq \frac C{R_{t}}.
\]
On $A_{R_{t}/2}^c$, we have 
\[
\Bigg| \frac{\nabla b(x)}{b(x)} \cdot \int_{A_{R_{t}/2}^c} K(x,y) \sqrt{b(x)b(y)} (b\omega_{\varepsilon})(y) \, dy \Bigg|
\leq C\int_{A_{R_{t}/2}^c} \frac{(b\omega_{\varepsilon})(y)}{|x-y|}\, dy
\]
 and we estimate the right-hand side using the well-known fact, see e.g. \cite{IftimieJEDP}, that for all $h\in L^1\cap L^\infty(S)$ we have $\int_S |h(y)|/|x-y|\,dy \leq C \|h\|_{L^1(S)}^{1/2} \|h\|_{L^\infty(S)}^{1/2}$:
\begin{equation*}
\int_{A_{R_{t}/2}^c} \frac{(b\omega_{\varepsilon})(y)}{|x-y|}\, dy\leq \frac{C}{\varepsilon}\sqrt{m_t(R_t/2)}.
 \end{equation*}
 This concludes the proof of the lemma.
\end{proof}

The second step aims towards the strong localization property consists in proving that for any $\ell>0$, the mass of vorticity outside $A_{1/|\ln\varepsilon|^k}$ is smaller than $\varepsilon^\ell$ provided $\varepsilon$ sufficiently small.

\begin{lemma} \label{lemma-step}For any $\ell>0$ and $k\in (0,1/4)$, we have
\[
\lim_{\varepsilon \to 0} \varepsilon^{-\ell} m_{t}\Big(\frac1{|\ln\varepsilon|^k} \Big) =0.
\]
\end{lemma}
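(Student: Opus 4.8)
The plan is to prove the statement by a Marchioro-type iteration on a hierarchy of transverse levels, the engine being a differential inequality for the escaping mass $m_t$. First I would record the basic \emph{flux estimate}: for a level $h>0$ and a small width $\delta>0$, test the weak formulation of \eqref{eq:transport-1} against $\Phi(x)=W(|b(x)-b(z^0)|)$, where $W$ is a smooth nondecreasing cutoff equal to $0$ below $h$ and to $1$ above $h+\delta$ (admissible after multiplication by the cutoff $\chi$ as in Lemma~\ref{lemma:J}). Since $\nabla\Phi$ is parallel to $\nabla b$ while $v_{\eps,L}$ is orthogonal to $\nabla b$ by \eqref{eq:uL}, the intermediate field $v_{\eps,L}$ drops out exactly as in the proof of Lemma~\ref{lemma:Rt}, and only $v_{\eps,K}$, $v_{\eps,R}$, $F_\eps$ survive. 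Bounding the regular fields by \eqref{bound:vR} and Assumption~\ref{ass: F}, and splitting $v_{\eps,K}$ from \eqref{eq:uK} into the contribution of $A_{h/2}$ (bounded by $C/h$ via \eqref{MVT}) and of $A_{h/2}^c$ (bounded by $C\sqrt{m_t(h/2)}/\eps$ through the $L^1$--$L^\infty$ estimate of \cite{IftimieJEDP} together with $\|b\weps\|_{L^\infty}\le C\eps^{-2}$), I obtain, for $h,\delta$ in the admissible range,
\[
 \frac{\dd}{\dd t}\, m_t(h+\delta)\ \le\ \frac{C}{\delta\lne}\Big(\frac1h+\frac{\sqrt{m_t(h/2)}}{\eps}\Big)\,m_t(h).
\]

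Two structural observations then organize the iteration. The \emph{regular drift is harmless}: keeping only the bounded part and the $1/h$ part gives $\frac{\dd}{\dd t}h(t)^2\le C/\lne$ along characteristics, so the non-singular velocity can transport vorticity only to transverse distance $O(\lne^{-1/2})$, which is $\ll\lne^{-k}$ precisely because $k<1/4<1/2$; hence reaching the level $\lne^{-k}$ \emph{necessarily} forces the singular feedback term to act. For the seed of the induction I would use \eqref{eq:est-K} and Chebyshev, $m_t(h)\le C/(h^2\lne)$, which already gives $m_t(\lne^{-k})\le C\lne^{2k-1}=o(1)$, together with the fact that the initial datum is supported in $B(z^0,M_0\eps)\subset A_{C\eps}$, so that $m_0(h)=0$ for every $h\gg\eps$.

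I would then iterate the flux estimate on a finite family of nested levels between $\tfrac12\lne^{-k}$ and $\lne^{-k}$, of cardinality $n=n(\ell,k)$, feeding into the singular term at each stage the improved bound obtained at the previous one. Each time-integration contributes a factor $t/\lne$ and a factorial $1/j!$, while the self-limiting feedback $\sqrt{m_t}/\eps$ is driven by the (ever smaller) mass already ahead; the benign-drift observation guarantees that the halved levels $h/2$ stay outside the concentration core of size $\lne^{-1/2}$ throughout the cascade. After finitely many steps the accumulated factorial and feedback gains overcome the $\eps^{-1}$ loss, producing $m_t(\lne^{-k})\le C_n\eps^\ell$ uniformly on $[0,T_\eps)$, which is the assertion.

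The \textbf{main obstacle} is exactly the $\eps^{-1}$ coming from $\|b\weps\|_{L^\infty}\le C\eps^{-2}$ in the $L^1$--$L^\infty$ bound for the singular part of $v_{\eps,K}$: a single application of the flux estimate is worthless, and the Chebyshev seed only delivers the \emph{polylogarithmic} smallness $\lne^{2k-1}$, far from the $\eps$-power decay required to neutralize the feedback. Bridging this gap — igniting the nonlinear contraction starting from a merely polylog-small input — is the technical heart of the proof and the reason the argument must be run as a carefully budgeted cascade rather than a one-shot estimate. It is in balancing the shell width $\delta$, the number of iterations $n$, and the loss $\eps^{-1}$ that the restriction $k<1/4$ is consumed, ensuring simultaneously that the harmless drift stays below $\lne^{-k}$ and that the seed exponent $2k-1<-\tfrac12$ is small enough for the iteration to gain at every step.
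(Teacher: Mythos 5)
Your proposal contains a genuine gap, and it is located exactly where you put the ``technical heart'': the $\eps^{-1}$ loss cannot be beaten by your cascade, no matter how the shell widths and the number of steps are budgeted. Your flux estimate imports from Lemma~\ref{lemma:Rt} the $L^1$--$L^\infty$ bound for the far part of $v_{\eps,K}$, producing the feedback term $\sqrt{m_t(h/2)}/\eps$. But all your levels $h$ lie in $[\tfrac12\lne^{-k},\lne^{-k}]$, so the feedback level $h/2\leq \tfrac12\lne^{-k}$ lies \emph{below} the range of levels the cascade improves: $m_s(h/2)$ is never upgraded by the iteration and is only controlled by the Chebyshev seed $m_s(h/2)\leq C\lne^{2k-1}$ coming from \eqref{eq:est-K}. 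Consequently each step carries the coefficient
\begin{equation*}
\frac{CT}{\delta\lne}\cdot\frac{\sqrt{m_s(h/2)}}{\eps}\ \geq\ \frac{cT}{\delta\lne}\cdot\frac{\lne^{k-1/2}}{\eps},
\end{equation*}
and since $\delta\sim \lne^{-k}/n$, the per-step base in the iterated bound $(AT)^n/n!\approx (eAT/n)^n$ equals $eAT/n\sim \lne^{2k-3/2}/\eps$, \emph{independently of $n$} (the factorial gain is exactly cancelled by the width shrinkage). This base diverges as $\eps\to 0$, so the iterated bound blows up for every choice of $n$, finite or growing; in fact already the first step gives $m_t\lesssim n\lne^{4k-5/2}\eps^{-1}$, which is worse than the trivial bound $\gamma_\eps$, so the ``nonlinear contraction'' never ignites. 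Rearranging the levels geometrically so that $h/2$ coincides with an already-improved level does not help either: the seed is only polylog-small, never power-small, so the $\eps^{-1}$ always wins. There is a second, independent problem: even if the per-step factor were of size $\lne^{-\beta}$ (no $\eps^{-1}$ at all), a cascade of \emph{finite} cardinality $n=n(\ell,k)$ only yields $m_t(\lne^{-k})\lesssim \lne^{-\beta n}$, a polylogarithmic decay which is never $o(\eps^\ell)$; the number of shells must grow with $\eps$.

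The paper's proof avoids $\|\weps\|_{L^\infty}$ altogether in the mass-flux inequality, and this is the idea you are missing. Working with the mollified mass $\mu_t(R,h)$, one \emph{symmetrizes} the $v_{\eps,K}$ flux using the antisymmetry of $K$, so that the integrand contains the difference $W_{R,h}'\big(b(y)-b(z^0)\big)\tfrac{\nabla b(y)}{b(y)}-W_{R,h}'\big(b(x)-b(z^0)\big)\tfrac{\nabla b(x)}{b(x)}$, which is Lipschitz with constant $C(h^{-2}+h^{-1})$; this cancels the $|x-y|^{-1}$ singularity of $K$, and the far-field double integral is then bounded by $\tfrac{C}{h^2R^2}\,\mathcal{K}_\eps(t)\,m_t(R)\leq \tfrac{C}{h^2R^2\lne}m_t(R)$ using the transverse moment of inertia --- i.e.\ by Chebyshev on the \emph{second} factor, not by $L^\infty$ on the first. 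This yields \eqref{Mar3:eq1}--\eqref{Mar3:eq2}, where $A_\eps(R,h)$ contains no negative power of $\eps$, only negative powers of $\lne$. The iteration (taken from \cite{Mar3}) then runs with $n\sim\lne$ shells of width $h\sim R/n$, so that $(eA_\eps T/n)^n\approx (C\lne^{-\beta})^{\lne}=e^{-\beta\lne\ln\lne(1+o(1))}$, which beats every power $\eps^{\ell}$. Finally, your accounting of where $k<1/4$ enters is also off: it is consumed in the moment term, $\tfrac{1}{n}\cdot\tfrac{1}{h^2R^2\lne^2}\sim n\,\lne^{4k-2}\sim\lne^{4k-1}$ with $n\sim\lne$, which requires $4k-1<0$; it has nothing to do with balancing an $\eps^{-1}$ loss, since no such loss occurs in the correct argument.
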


For the particular choice $b(z,r)=r$, namely for the axisymmetric 3D Euler equations without swirl, Lemma~\ref{lemma-step} recovers the statement of \cite[Lemma 3.4]{Mar3}.

\begin{proof}
We adapt again the proof of \cite[Lemma 3.4]{Mar3} to our problem, namely \eqref{eq:transport-1}. We begin by defining a mollified version of $m_{t}$: 
 \[
 \mu_{t}(R,h) = \int_{\Omega} \Big(1-W_{R,h}\Big(b(y) - b(z^0)\Big)\Big) (b\omega_{\varepsilon})(t,y)\,\dd y
 \]
 where $W_{R,h}$ is a smooth non-negative function from $\R$ to $\R$ such that
 \[
 W_{R,h}(s) =
\begin{cases}
 1 &\text{if }|s|\leq R\\
 0 &\text{if }|s|\geq R+h
\end{cases}
 \]
 with $W_{R,h}^{(p)}(s) \leq C_{p}/h^{p}$ for $p=1,2$. The function $W_{R,h}$ satisfies
 \[
 \mu_{t} (R,h) \leq m_{t}(R) \leq \mu_{t} (R-h,h) .
 \]
 To prove Lemma~\ref{lemma-step}, it then suffices to find a sequence $(h_{\varepsilon})_{\varepsilon}$ such that
 \[
 \varepsilon^{-\ell} \mu_{t}\Big(\frac1{|\ln\varepsilon|^k} -h_{\varepsilon} ,h_{\varepsilon} \Big) \to 0.
 \]
 The smooth function $\mu_t$ is differentiable with respect to time, and we compute
\begin{align*}
\frac{\dd}{\dd t} \mu_{t}(R,h) =&
 -\frac1{|\ln\varepsilon|}\int \nabla \Big(W_{R,h}\Big(b(y) - b(z^0)\Big)\Big)\cdot (v_{\varepsilon}+F_{\varepsilon})(y)(b\omega_{\varepsilon})(y)\,\dd y\\
=&- \int_{\Omega} W_{R,h}'\Big(b(y) - b(z^0)\Big) \frac1{|\ln\varepsilon|}\nabla b(y)\cdot (v_{\varepsilon}+F_{\varepsilon})(y) (b\omega_{\varepsilon})(y)\,\dd y.
\end{align*}
Upon applying the decomposition \eqref{eq:decomposition} for $\veps$, we use the bound \eqref{bound:vR} of $v_{\varepsilon,R}$ and Assumption~\ref{ass: F} for $F_{\varepsilon}$ to get
\[
 \Big| \int_{\Omega} W_{R,h}'\Big(b(y) - b(z^0)\Big)\frac1{|\ln\varepsilon|}\nabla b(y)\cdot (v_{\varepsilon,R}+F_{\varepsilon})(y) (b\omega_{\varepsilon})(y)\,\dd y \Big| \leq \frac{Cm_{t}(R)}{h|\ln \varepsilon |}.
\]
For the term containing $v_{\eps,K}$, we write that
\begin{align*}
\int_{\Omega} & W_{R,h}'\Big(b(y) - b(z^0)\Big) \nabla b(y)\cdot v_{\varepsilon,K}(y) (b\omega_{\varepsilon})(y)\,\dd y \\
=&\frac1{2\pi}\iint_{\Omega^2}W_{R,h}'\Big(b(y) - b(z^0)\Big) \frac{\nabla b(y)}{b(y)}\cdot K(y,x) \sqrt{b(x)b(y)} (b\omega_{\varepsilon})(y)(b\omega_{\varepsilon})(x)\,\dd y \dd x\\
=& \frac1{4\pi}\iint_{\Omega^2} f(t,x,y)\,\dd y \dd x
\end{align*}
 where 
\begin{multline*}
f(t,x,y):= \Bigg(W_{R,h}' \Big(b(y) - b(z^0)\Big) \frac{\nabla b(y)}{b(y)} - W_{R,h}'\Big(b(x) - b(z^0)\Big) \frac{\nabla b(x)}{b(x)}\Bigg)\\
\cdot K(y,x) \sqrt{b(x)b(y)} (b\omega_{\varepsilon})(y)(b\omega_{\varepsilon})(x)
\end{multline*}
vanishes if $x,y\in A_{R}$ due to the support properties of $W_{R,h}'$. Hence,
\begin{align*}
 \iint_{\Omega^2}& f(t,x,y)\,\dd y \dd x \\
 =& \iint_{x\in A_{R}^c} f(t,x,y)\,\dd y\dd x+\iint_{y\in A_{R}^c} f(t,x,y)\,\dd y \, dx - \iint_{x,y\in A_{R}^c} f(t,x,y)\,\dd y\dd x\\
 =& 2\iint_{x\in A_{R}^c} f(t,x,y)\,\dd y\dd x - \iint_{x,y\in A_{R}^c} f(t,x,y)\,\dd y\dd x\\
 =& 2\iint_{(x,y)\in A_{R}^c\times A_{R-h^\alpha}} f(t,x,y)\,\dd y\dd x+2\iint_{(x,y)\in A_{R}^c\times A_{R-h^\alpha}^c} f(t,x,y)\,\dd y\dd x \\
 &- \iint_{x,y\in A_{R}^c} f(t,x,y)\,\dd y\dd x
\end{align*}
where $\alpha>0$ is a parameter to be fixed later such that $R>2 h^\alpha$. For all $(x,y)\in A_{R}^c\times A_{R-h^\alpha}$ we have by the mean value theorem \eqref{MVT} that
\[
h^\alpha \leq|b(x)-b(z^0)| - |b(y)-b(z^0)| \leq|b(x)-b(y)| \leq C_{r_{0}} |x-y|.
 \]
It follows that
\begin{align*}
 \Big|\iint_{(x,y)\in A_{R}^c\times A_{R-h^\alpha}} f(t,x,y)\,\dd y\dd x \Big| \leq C\frac{m_{t}(R)}{h^{1+\alpha}}.
\end{align*}
For the remaining two integrals, we use that 
\begin{multline*}
\Bigg|W_{R,h}' \Big(b(y) - b(z^0)\Big) \frac{\nabla b(y)}{b(y)} - W_{R,h}'\Big(b(x) - b(z^0)\Big) \frac{\nabla b(x)}{b(x)} \Bigg| \\
\leq C\Big(\frac{1}{h^2}+\frac{1}{h}\Big) |x-y|.
\end{multline*}
Therefore, provided that $h\leq 1$ and $R>2 h^\alpha$, implying $R-h^\alpha >R/2$, we have
\begin{align*}
 2\Bigg|\iint_{(x,y)\in A_{R}^c\times A_{R-h^\alpha}^c}& f(t,x,y)\,\dd y\dd x\Bigg| + \Bigg|\iint_{x,y\in A_{R}^c} f(t,x,y)\,\dd y\dd x\Bigg|\\
 \leq&\frac{C}{h^2}\iint_{(x,y)\in A_{R}^c\times A_{R/2}^c} (b^2\omega_{\varepsilon})(y)(b\omega_{\varepsilon})(x)\, \dd x \dd y\\
 \leq&\frac{C}{h^2 R^2} \mathcal{K}_{\varepsilon}(t) m_{t}(R) \leq \frac{Cm_{t}(R)}{h^2 R^2 |\ln \varepsilon|},
\end{align*}
where we have used \eqref{eq:est-K} in the last inequality.
Combining these estimates, we obtain that
\begin{equation}\label{Mar3:eq1}
 \frac{\dd}{\dd t} \mu_{t}(R,h) \leq A_{\varepsilon}(R,h) m_{t}(R)
\end{equation}
with
\begin{equation}\label{Mar3:eq2}
A_{\varepsilon}(R,h):= C\Big(\frac{1}{h|\ln \varepsilon |} + \frac{1}{h^{1+\alpha} |\ln \varepsilon|}+\frac{1}{h^2 R^2 |\ln \varepsilon|^2}\Big).
\end{equation}
We note that the estimate on $A_{\varepsilon}(R,h)$ coincides with the one obtained in \cite[Equation (3.44)]{Mar3}. It hence suffices to reproduce verbatim the remaining part of the proof of \cite[Lemma 3.4]{Mar3} in order to complete the proof of Lemma~\ref{lemma-step}. Indeed, the rest of the proof of \cite[Lemma 3.4]{Mar3} is an iterative argument only based on \eqref{Mar3:eq1}-\eqref{Mar3:eq2}.
\end{proof}

We finish this section with the strong localization property, namely {\it(i)} of Theorem~\ref{thm: main red system}.

\begin{proposition}\label{prop:strong-loc}
Under Assumption~\ref{ass: main part}, there exists $\varepsilon_{0}$, depending only on $T$, $b$, $\Omega$, $(z_i^0)_{i=1,\dots, {N_v}}$, $(\gamma_i)_{i=1,\dots, {N_v}}$, $(\Gamma^i)_{i=1,\dots, {N_{is}}}$, $M_0$, such that for $\eps \leq \eps_0$, for $T_{\eps}$ defined in \eqref{def:maxi-T} it holds $T_{\eps}=T$ and the vorticity $\weps$ is strongly localized in the direction of steepest ascent of $b$, namely for every $k\in (0,1/4)$ there exists $\eps_{k,T}, C_{k,T}>0$ depending only on $z^0$, $\gamma$, $M_{0}$, $b$, $\Omega$, $k$ and $T$ such that
\begin{equation*}
 \supp \weps(t,\cdot)\subset \left\{x\in \Omega \, \, : \, |b(x)-b(z^0)|\leq \frac{C_{k,T}}{|\ln\eps|^k}\right\}
\end{equation*}
for all $\varepsilon\in (0,\varepsilon_{k,T}]$ and all $t\in [0,T]$.
\end{proposition}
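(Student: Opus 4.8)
The plan is to control the single scalar quantity $R_t$ introduced in Lemma~\ref{lemma:Rt} and to show it remains of size $\lne^{-k}$ throughout the interval on which the solution is defined. Since $\weps$ is transported along the $C^1$ flow $X_\eps$ of Proposition~\ref{prop:transport}, we may write $R_t = \max_{x_0\in\supp\weps^0}|b(X_\eps(t,x_0))-b(z^0)|$, a maximum over a compact set of functions that are Lipschitz in $t$ with a uniform constant on $\cal C_{\rho_b}\subset\Omega_{r_0}$. Thus $R_t$ is Lipschitz, differentiable for a.e.\ $t$, and a standard envelope argument bounds its a.e.\ derivative by $\frac{\dd}{\dd t}|b(X_\eps(t,x_0))-b(z^0)|$ evaluated along a trajectory attaining the maximum; this is exactly the quantity estimated in Lemma~\ref{lemma:Rt}, so that for a.e.\ $t\in[0,T_\eps)$
\[
\frac{\dd}{\dd t}R_t \leq \frac{C}{\lne} + \frac{C}{R_t\lne} + \frac{C\sqrt{m_t(R_t/2)}}{\eps\lne}.
\]

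Next I would run a barrier argument. Fix $k\in(0,1/4)$ and apply Lemma~\ref{lemma-step} with a fixed $\ell>2$ (say $\ell=4$) to produce $\eps_{k,T}$ with $m_t(\lne^{-k})\leq\eps^\ell$ for $\eps\leq\eps_{k,T}$. On the set of times where $R_t\geq 2\lne^{-k}$ we have $R_t/2\geq\lne^{-k}$, so monotonicity of $m_t$ bounds the last term by $C\eps^{\ell/2-1}/\lne\to0$, while the middle term is $\leq C\lne^{-(1-k)}/2$; since $1-k<1$ all three contributions are dominated by $C'\lne^{-(1-k)}$ for $\eps$ small. As $R_0\leq C\eps<2\lne^{-k}$, for any $t^\ast\in[0,T_\eps)$ with $R_{t^\ast}>2\lne^{-k}$ I would take the last time $t_1<t^\ast$ with $R_{t_1}=2\lne^{-k}$ and integrate on $(t_1,t^\ast)$ to get
\[
R_{t^\ast}\leq \frac{2}{\lne^{k}}+\frac{C'T}{\lne^{1-k}}\leq\frac{C_{k,T}}{\lne^{k}},
\]
using $\lne^{-(1-k)}\leq\lne^{-k}$ (valid for $k<1/2$). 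Hence $R_t\leq C_{k,T}\lne^{-k}$ on all of $[0,T_\eps)$, which is the desired strong localization restricted to that interval.

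It remains to remove the restriction to $[0,T_\eps)$ by proving $T_\eps=T$. Applying the previous step with a fixed exponent, say $k=1/8$, and shrinking $\eps_0$ so that $C_{1/8,T}\lne^{-1/8}\leq\rho_b/2$, gives $\supp\weps(t,\cdot)\subset\{\,|b-b(z^0)|\leq\rho_b/2\,\}$ for $t\in[0,T_\eps)$. Since each support point is the image under the continuous flow of a point of $\supp\weps^0\subset B(z^0,M_0\eps)\subset\cal C_{\rho_b/2}$, and the whole trajectory stays in $\{\,|b-b(z^0)|\leq\rho_b/2\,\}$, it cannot leave the connected component $\cal C_{\rho_b/2}$ of that set (Remark~\ref{rem:r0}); thus $\supp\weps(t,\cdot)\subset\cal C_{\rho_b/2}$, which is compactly contained in the interior of $\cal C_{\rho_b}$. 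By continuity of $t\mapsto\weps(t,\cdot)$ and maximality in \eqref{def:maxi-T}, $T_\eps$ cannot be strictly smaller than $T$, so $T_\eps=T$. Feeding this back, $R_t\leq C_{k,T}\lne^{-k}$ holds for all $t\in[0,T]$ and every $k\in(0,1/4)$, which is the claim.

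The main obstacle is the self-referential structure of the argument: both Lemma~\ref{lemma:Rt} and Lemma~\ref{lemma-step} are available only on $[0,T_\eps)$, while the very goal is to upgrade $T_\eps$ to $T$. The barrier argument must therefore be organized so that the localization estimate is first established on the a priori shorter interval $[0,T_\eps)$ with constants independent of $T_\eps$, and only then used, through the connectedness and continuity of the transported support, to contradict $T_\eps<T$. A secondary technical point is the envelope (Danskin-type) justification that the a.e.\ derivative of the maximum $R_t$ is governed by a maximizing trajectory, together with the bookkeeping ensuring that the dangerous term $\sqrt{m_t(R_t/2)}/(\eps\lne)$ is only ever evaluated where $R_t/2\geq\lne^{-k}$, so that Lemma~\ref{lemma-step} indeed applies.
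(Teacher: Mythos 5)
Your proposal is correct and follows essentially the same route as the paper: the paper's own proof is precisely the continuity/barrier argument based on the differential inequality of Lemma~\ref{lemma:Rt} and the smallness of $m_t$ from Lemma~\ref{lemma-step}, for which it defers the details to \cite[Proof of (3.8)]{Mar3}. You have in fact reconstructed those omitted details faithfully, including the two points the paper's citation hides — that the estimate must first be closed on $[0,T_\eps)$ with constants independent of $T_\eps$ before the connectedness-plus-continuity step upgrades $T_\eps$ to $T$, and that $m_t(R_t/2)$ is only invoked where $R_t/2\geq |\ln\eps|^{-k}$.
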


Note that for the particular choice $b(z,r)=r$, corresponding to the axisymmetric 3D Euler equations without swirl, the identical localization property is shown in \cite[Equation (3.8)]{Mar3}. Once, Lemma~\ref{lemma-step} being the adaptation of \cite[Lemma 3.4]{Mar3} is proven, the proof of Proposition~\ref{prop:strong-loc} follows the same lines as the \cite[Proof of (3.8), p.70]{Mar3}. For the sake of a concise exposition, we refer to \cite{Mar3} for full details of the continuity argument, only based on the inequality in Lemma~\ref{lemma:Rt} and the limit of Lemma~\ref{lemma-step}.

\section{Limiting trajectory}\label{sec:trajectory}

Throughout this section, given $T>0$ we consider sharply concentrated initial data $\weps^0$ in the sense of Assumption~\ref{data red} and the unique corresponding weak solution $(\omega_\eps, v_\eps)$ of \eqref{eq:transport-1} on $[0,T]$ with initial data $\weps^0$ and where the exterior field satisfies Assumption~\ref{ass: F}.

We recall that $T_{\eps}$ defined in \eqref{def:maxi-T} satisfies $T_{\eps}=T$ by virtue of Proposition~\ref{prop:strong-loc} and that hence all statements in Sections~\ref{sec:energy} and \ref{sec:momemtum} hold true on $[0,T]$.

\medskip

The purpose of the following proposition is to establish the limiting motion of the point vortex.
\begin{proposition}
\label{prop:limiting-trajectory}
Let $z_\eps$ defined by \eqref{def:point-vortex}. There exists $z\in C^1([0,T], \R^2)$ such that $z_\eps$ converges uniformly to $z$ on $[0,T]$, where $z$ satisfies the ODE
\begin{equation}\label{EDO:limit}
\dot{z}(t)=-\frac{\gamma}{4\pi}\frac{\nabla^\perp b(z(t))}{b(z(t))},\quad z(0)=z^0.
\end{equation}
\end{proposition}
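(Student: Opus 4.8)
The plan is to derive an approximate ODE satisfied by the center of mass $z_\eps$ and then pass to the limit through a Gronwall stability argument for ODEs. The starting point is the identity \eqref{eq:derivee-z}, established in the course of proving Proposition~\ref{prop:momentum}, which already isolates the relevant contribution:
\[
\lne\,\dot z_\eps(t) = \frac1{\gamma_\eps}\int_\Omega v_{\varepsilon,L}(x)(b\weps)(x)\,\dd x + \mathcal{O}(1),
\]
where $v_{\varepsilon,L}(x) = \frac{\nabla^\perp b(x)}{4\pi b^2(x)}\psi_\eps(x)$ and the time derivative is understood in the distributional (equivalently, integrated) sense used throughout, which is legitimate since $\weps\in C([0,T];L^p)$ and, as will follow, $\dot z_\eps$ is bounded. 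First I would freeze the slowly varying factor $\nabla^\perp b/b^2$ at $z_\eps(t)$. The resulting error $\frac1{\gamma_\eps}\int_\Omega\big(\frac{\nabla^\perp b(x)}{b^2(x)} - \frac{\nabla^\perp b(z_\eps)}{b^2(z_\eps)}\big)\psi_\eps(x)(b\weps)(x)\,\dd x$ is controlled by the mean value inequality \eqref{MVT} (yielding a factor $|x-z_\eps|$), the pointwise bound $|\psi_\eps|\le C\lne$ on $\cal C_{\rho_b}$ from Lemma~\ref{lem:est-psi}, and Cauchy--Schwarz combined with the moment estimate $\Ie\le C/\lne$ of Proposition~\ref{prop:momentum-bis}; this produces a remainder of size $\mathcal{O}(\lne\sqrt{\Ie}) = \mathcal{O}(\sqrt{\lne})$.

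Next I would evaluate the remaining scalar integral $\frac1{\gamma_\eps}\int_\Omega \psi_\eps(b\weps)\,\dd x$ via the energy. By the kernel decomposition \eqref{eq.Gdecomp} and the remainder bound \eqref{est:R}, i.e. Lemma~\ref{lemma:energy-1}, this integral equals $-2\pi E_\eps(t) + \mathcal{O}(1)$, and Proposition~\ref{prop:energy} gives $E_\eps(t) = \frac1{2\pi}\gamma_\eps^2 b(z^0)\lne + \mathcal{O}(1)$, whence $\frac1{\gamma_\eps}\int_\Omega \psi_\eps(b\weps)\,\dd x = -\gamma_\eps b(z^0)\lne + \mathcal{O}(1)$. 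Combining the two steps and dividing by $\lne$ gives
\[
\dot z_\eps(t) = -\frac{\gamma_\eps\, b(z^0)}{4\pi\, b^2(z_\eps(t))}\nabla^\perp b(z_\eps(t)) + \mathcal{O}\Big(\tfrac1{\sqrt{\lne}}\Big).
\]
Since $|b(z_\eps) - b(z^0)|\le C/\sqrt{\lne}$ by Proposition~\ref{prop:momentum-bis} and $b$ is bounded below on $\cal C_{\rho_b}$, I can replace $b(z^0)/b^2(z_\eps)$ by $1/b(z_\eps)$ up to $\mathcal{O}(1/\sqrt{\lne})$, obtaining the closed approximate dynamics $\dot z_\eps(t) = -\frac{\gamma_\eps}{4\pi}\frac{\nabla^\perp b(z_\eps(t))}{b(z_\eps(t))} + \mathcal{O}(1/\sqrt{\lne})$.

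To conclude, let $z$ denote the unique solution of \eqref{EDO:limit}, which by Remark~\ref{rem:ODE} stays on the level set $\cal C\subset\cal C_{\rho_b}$; recall also that $z_\eps(t)\in\cal C_{\rho_b}$ for all $t\in[0,T]$ by Proposition~\ref{prop:momentum-bis} together with $T_\eps=T$. Writing both trajectories in integral form, subtracting, and splitting the integrand as $G_\eps(z_\eps)-G_\eps(z)+(G_\eps-G)(z)$ with $G_\eps(w):=-\frac{\gamma_\eps}{4\pi}\nabla^\perp b(w)/b(w)$, I would use that $G_\eps$ is Lipschitz on $\cal C_{\rho_b}$ uniformly in $\eps$ (since $b\in C^3$ is bounded below there) and that $G_\eps\to G$ uniformly because $\gamma_\eps\to\gamma$. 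With $|z_\eps(0)-z^0|\le M_0\eps$ and the $\mathcal{O}(1/\sqrt{\lne})$ remainder, Gronwall's lemma yields $\sup_{[0,T]}|z_\eps - z| \le e^{LT}\big(M_0\eps + CT|\gamma_\eps-\gamma| + CT/\sqrt{\lne}\big)\to 0$, the claimed uniform convergence, and the $C^1$ regularity of $z$ is immediate from the ODE. I expect the main obstacle to be the first step: confirming that the freezing of $\nabla^\perp b/b^2$ and the energy evaluation are quantitatively compatible, that is, that the product of the large factor $\psi_\eps\sim\lne$ with the small moment $\sqrt{\Ie}\sim 1/\sqrt{\lne}$ leaves a remainder which is genuinely $o(\lne)$, so that it vanishes after dividing by $\lne$.
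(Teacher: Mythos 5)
Your derivation of the approximate ODE is exactly the paper's argument: you start from \eqref{eq:derivee-z}, freeze $\nabla^\perp b/b^2$ at $z_\eps(t)$ with an error $\mathcal{O}(\lne\sqrt{\Ie})=\mathcal{O}(\sqrt{\lne})$ controlled by \eqref{MVT}, Lemma~\ref{lem:est-psi}, Cauchy--Schwarz and Proposition~\ref{prop:momentum-bis}, evaluate $\int\psi_\eps(b\weps)$ through Lemma~\ref{lemma:energy-1} and Proposition~\ref{prop:energy}, and then trade $b(z^0)/b^2(z_\eps)$ for $1/b(z_\eps)$ using $|b(z_\eps)-b(z^0)|\leq C/\sqrt{\lne}$; all of this matches the paper step for step, including the orders of the remainders. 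The only genuine difference is the final limit passage: the paper concludes softly, by Ascoli--Arzel\`a compactness of the family $z_\eps$ (equicontinuity coming from the boundedness of $\dot z_\eps$) followed by identification of any limit point with the unique solution of \eqref{EDO:limit} from Remark~\ref{rem:ODE}, whereas you run a Gronwall stability estimate comparing $z_\eps$ directly with $z$. Your route is correct and in fact slightly stronger, since it yields the explicit rate $\sup_{[0,T]}|z_\eps-z|\leq e^{LT}\bigl(M_0\eps+CT|\gamma_\eps-\gamma|+CT/\sqrt{\lne}\bigr)$, which the compactness argument does not provide; the price is that you must justify a uniform-in-$\eps$ Lipschitz bound for $G_\eps(w)=-\tfrac{\gamma_\eps}{4\pi}\nabla^\perp b(w)/b(w)$ between the two trajectories, and since $\cal C_{\rho_b}$ need not be convex this should formally be done via the quasiconvexity inequality \eqref{MVT} on $\Omega_{r_0}$ (which contains both $z_\eps(t)$, by Proposition~\ref{prop:momentum-bis} and $T_\eps=T$, and $z(t)$, which stays on $\cal C$ by conservation of $b$ along \eqref{EDO:limit}) rather than by a naive mean value argument. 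With that small point made precise, your proof is complete.
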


Note that for all $z^0\in \Omega$ as in Assumption~\ref{ass: main part}, see also Assumption~\ref{data red}, there exists a unique global solution $z$ of \eqref{EDO:limit}, see Remark~\ref{rem:ODE}. The limiting dynamics \eqref{EDO:limit} is consistent with the one obtained in \cite{DekeyserVanS, richardson}, see Remark~\ref{rem:ODE-DVS-P}.

\begin{proof}
By Equation \eqref{eq:derivee-z} for the derivative of $z_\eps$ and the expression \eqref{eq:uL} for $v_{\varepsilon,L}$, we have
\begin{align*}
\lne \dot{z}_\eps(t)=&\frac{1}{\gamma_{\varepsilon}} \int_{\Omega}\frac{\nabla^\perp b(x)}{4\pi b^2(x)}\psi_\eps(t,x)(b\omega_\eps)(t,x)\,\dd x +\mathcal{O}(1)\\
=&\frac{1}{\gamma_{\varepsilon}} \frac{\nabla^\perp b(z_\eps(t))}{4\pi b^2(z_\eps(t))}\int_{\Omega}\psi_\eps(t,x)(b\omega_\eps)(t,x)\,\dd x\\
&+\frac{1}{4\pi\gamma_{\varepsilon}} \int_{\Omega}\left(\frac{\nabla^\perp b(x)}{b^2(x)}-\frac{\nabla^\perp b(z_\eps(t))}{ b^2(z_\eps(t))}\right) \psi_\eps(t,x)(b\omega_\eps)(t,x)\,\dd x +
\mathcal{O}(1).
\end{align*}
On the one hand, we note by Propositions~\ref{prop:momentum-bis} and \ref{prop:strong-loc} that $z_{\eps}(t),x\in \cal C_{r_{b}} \subset \Omega_{r_0} $ for all $x\in \supp(\weps(t,\cdot))$ and $t\in [0,T]$. In particular, there exists $C=C(r_0)>0$ such that $|\nabla^2 b(x)|$ and $b^{-1}(x)$ are bounded by $C(r_0)$ on $\Omega_{r_0/2}$. It then follows from the mean-value theorem \eqref{MVT}, Lemma~\ref{lem:est-psi} providing a bound for $\psi_\eps$ and by the estimate on $\Ie$ stated in Proposition~\ref{prop:momentum-bis} that
\begin{align*}
\Bigg| \int_{\Omega}\Big(\frac{\nabla^\perp b(x)}{b^2(x)}-&\frac{\nabla^\perp b(z_\eps(t))}{b^2(z_\eps(t))}\Big) \psi_\eps(t,x)(b\omega_\eps)(t,x)\,\dd x \Bigg| \\
&\leq C \int_{\Omega} |x-z_\eps(t)| |\psi_\eps(t,x)|(b\omega_\eps)(t,x)\,\dd x\\
&\leq C \lne \sqrt{\Ie} 
\leq C\sqrt{\lne},
\end{align*}
where we have used the Cauchy-Schwarz inequality in the second inequality.
On the other hand, Lemma~\ref{lemma:energy-1} and the expansion of the energy given by Proposition~\ref{prop:energy} yield that 
\begin{align*}
\int_{\Omega}\psi_\eps(t,x)(b\omega_\eps)(t,x)\,\dd x&=-2\pi E_{\varepsilon}(t)+\mathcal{O}(1)\\
&=-\gamma_{\varepsilon}^2 b(z^0)\lne +\mathcal{O}(1).
\end{align*}
Therefore, we get
\begin{equation*}
 \dot{z}_\eps(t)=-\frac{\gamma_{\varepsilon}}{4\pi} b(z^0) \frac{\nabla^\perp b(z_\eps(t))}{b^2(z_\eps(t))}+\mathcal{O}\left(\frac{1}{\sqrt{\lne}}\right).
\end{equation*}
Note that in view of Proposition~\ref{prop:momentum-bis} one has that 
\begin{equation*}
 \left|b(z_\eps(t)-b(z^0)\right|\leq \frac{C}{\sqrt{|\ln(\eps)|}},
\end{equation*}
for all $t\in [0,T]$. In particular, there exists $c>0$ such that $c\leq b(z_{\eps}(t))\leq \frac{1}{c}$ for all times and $\eps$ sufficiently small.
We obtain that 
\begin{align*}
\dot{z}_\eps(t)=&-\frac{\gamma_{\varepsilon}}{4\pi} \frac{\nabla^\perp b(z_\eps(t))}{b(z_\eps(t))}-\frac{\gamma_{\varepsilon}}{4\pi}\frac{b(z_0)-b(z_\eps(t))}{b(z_\eps(t))} \frac{\nabla^\perp b(z_\eps(t))}{b(z_\eps(t))}+\mathcal{O}\left(\frac{1}{\sqrt{\lne}}\right)\\
=&-\frac{\gamma_{\varepsilon}}{4\pi} \frac{\nabla^\perp b(z_\eps(t))}{b(z_{\eps}(t))}+\mathcal{O}\left(\frac{1}{\sqrt{\lne}}\right).
\end{align*}

By Ascoli-Arzela, there exists $z$ such that (up to a subsequence still denoted the same way), $z_\eps$ converges to $z$ uniformly on $[0,T]$ with $z$ the unique solution of \eqref{EDO:limit}. By uniqueness of this solution, see Remark~\ref{rem:ODE}, we infer that the full sequence converges to $z$. The conclusion follows.
\end{proof}

\begin{remark}\label{rem:ODE-DVS-P}
In the absence of an exterior field, i.e. $F_{\eps}=0$ and for non-vanishing topographies $b$, the limiting dynamics for a single vortex has formally been derived in \cite{richardson} and rigorously proven by {\sc J. Dekeyser} and {\sc J. Van Schaftingen} \cite{DekeyserVanS}. Coming back to the proof above we observe
\begin{align*}
\dot{z}_\eps(t)&=-\frac{1}{2\gamma_{\varepsilon}} \frac{\nabla^\perp b(z_\eps(t))}{b^2(z_\eps(t))}\frac{E_\eps(t)}{\lne}+\mathcal{O}\left(\frac{1}{\sqrt{\lne}}\right).
\end{align*}
Therefore, this shows that
\begin{equation*}
z_\eps\left(\frac{\gamma \lne s}{E_\eps}\right)\to q(s),
\end{equation*}
where
\begin{equation*}
\dot{q}(s)=\frac{1}{2}\nabla^{\perp}(b^{-1})(q(s)).
\end{equation*}
Hence the asymptotic ODE \eqref{EDO:limit} is consistent with \cite[Theorem 1.1]{DekeyserVanS}. Note that the limiting ODEs differ by the constant $-1/2$ which is due to the definition of $\nabla^{\perp}$ (in the present paper $\nabla^{\perp}b=(-\partial_2b,\partial_1 b)$) and the definition of the energy (where we do not have multiplied by $1/2$ the integral). We refer the reader to \cite[Equation (1.3)]{DekeyserVanS} for a comparison with the dynamics derived in \cite{richardson}.
\end{remark}

The weak localization property provided by Proposition~\ref{prop:weak-loc} with $T_{\eps}=T$ corresponds to {\it(ii)} Theorem~\ref{thm: main red system} whereas the previous proposition is related to {\it(iii)}. The proof of Theorem~\ref{thm: main red system} is then complete.

\appendix

\section{Proof of Proposition~\ref{prop:transport}}
\label{sec:transport-appendix}

The purpose of this section is to gather several properties of the linear transport and continuity equations associated to the nonlinear lake equations in order to prove Proposition~\ref{prop:transport}. The theory of transport equations for non smooth velocity field with bounded divergence has been widely investigated since the pioneering work of {\sc R. J. Di Perna} and {\sc P. L. Lions} \cite{Dip}. 
More recently, existence and stability of renormalized solutions for the lake equations have been proved by {\sc D. Bresch} and {\sc P.-E. Jabin} \cite{BJ}.

Here we will show that any weak solution to the linear transport equation is renormalized, therefore unique, and that it is transported by the flow associated to the velocity field. To this aim, we will adapt the theory of Di Perna and Lions to the present case with possibly unbounded divergence on the boundary, by relying on a specific analysis performed by {\sc B. Desjardins} \cite{Desjardins} in this situation. 

In all the following, $(\Omega,b)$ is a lake satisfying Assumption~\ref{assum:lake} and $\omega^0\in L^\infty(\Omega)$. In the definition of $b$, replacing $c(x)$ by $(2\|\varphi\|_{L^\infty })^\alpha c(x)$ if necessary, we may assume that $\varphi\leq 1/2$ on $\overline{\Omega}$ if $\alpha>0$. If $\alpha=0$, we can also assume $\varphi\leq 1/2$ because it appears only in the definition of $\Omega$ and $\partial\Omega$.

We begin by stating that $\varphi$ is equivalent to the distance to the boundary in the neighborhood of the boundary. This is probably well-known but we provide the full arguments for sake of completeness (and since we did not find any precise reference).

\begin{lemma}\label{lem:d-phi}
There exists $C>0$ such that
\[
\frac1C\dist(x,\partial\Omega)\leq \varphi(x) \leq C\dist(x,\partial\Omega)\quad \text{for all } x\in \Omega.
\]
\end{lemma}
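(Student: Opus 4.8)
The plan is to localise the problem near $\partial\Omega$, where the hypothesis $\nabla\varphi\neq 0$ and the regularity $\partial\Omega\in C^3$ are decisive, and to dispatch the region away from the boundary by a trivial compactness argument. First I would record two elementary facts. Since $\Omega=\{\varphi>0\}$ and $\varphi\in C^3(\overline\Omega)$, one has $\varphi\equiv 0$ on $\partial\Omega$: if some $y\in\partial\Omega$ had $\varphi(y)>0$, continuity would force $\varphi>0$ on a neighbourhood of $y$, i.e.\ $y$ would be interior to $\{\varphi>0\}=\Omega$, a contradiction. Consequently the tangential derivatives of $\varphi$ vanish on $\partial\Omega$, so for $y\in\partial\Omega$ the gradient $\nabla\varphi(y)$ is normal to $\partial\Omega$ and, as $\varphi$ increases inward, $\nabla\varphi(y)=|\nabla\varphi(y)|\,\mb n(y)$ with $\mb n(y)$ the inward unit normal. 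Because $\partial\Omega\in C^3$ is compact and $\nabla\varphi\neq 0$ there, there exist $\eta>0$ and $m>0$ such that the signed distance $d$ (positive in $\Omega$) is $C^3$ on the tubular neighbourhood $\{|d|<\eta\}$ with $|\nabla d|=1$, every point with $0<d(x)<\eta$ is written uniquely as $x=y+d(x)\,\mb n(y)$ with $y\in\partial\Omega$, and $|\nabla\varphi(y)|\geq m$ for all $y\in\partial\Omega$.

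On this boundary layer $\{0<d<\eta\}$ I would estimate $\varphi$ along the normal segment $s\mapsto y+s\,\mb n(y)$, which stays in $\overline\Omega$, so that no quasiconvexity is needed. Writing $\varphi(x)=\int_0^{d(x)}\nabla\varphi\big(y+s\,\mb n(y)\big)\cdot\mb n(y)\,\dd s$ and bounding the integrand by $\|\nabla\varphi\|_{L^\infty(\overline\Omega)}$ gives at once the upper bound
\[
\varphi(x)\leq \|\nabla\varphi\|_{L^\infty(\overline\Omega)}\,d(x)=\|\nabla\varphi\|_{L^\infty(\overline\Omega)}\,\dist(x,\partial\Omega).
\]
For the lower bound I would Taylor expand to second order: with $M_2:=\|D^2\varphi\|_{L^\infty(\overline\Omega)}$ and using $\varphi(y)=0$ together with $\nabla\varphi(y)\cdot\mb n(y)=|\nabla\varphi(y)|\geq m$,
\[
\varphi(x)=d(x)\,\nabla\varphi(y)\cdot\mb n(y)+\mathcal{O}\big(d(x)^2\big)\geq m\,d(x)-\tfrac{M_2}{2}\,d(x)^2,
\]
uniformly in $y\in\partial\Omega$. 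After shrinking $\eta$ so that $\eta\leq m/M_2$, this yields $\varphi(x)\geq\tfrac{m}{2}\,\dist(x,\partial\Omega)$ throughout $\{0<d<\eta\}$.

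It remains to treat the complement $F_\eta:=\{x\in\Omega:\dist(x,\partial\Omega)\geq\eta\}$, which is a compact subset of $\Omega$; there $\varphi$ attains a strictly positive minimum $m_\eta>0$, while $\eta\leq\dist(\cdot,\partial\Omega)\leq\operatorname{diam}\Omega$ and $\varphi\leq\|\varphi\|_{L^\infty}$, so both inequalities hold on $F_\eta$ with constant $\max\!\big(\|\varphi\|_{L^\infty}/\eta,\ \operatorname{diam}\Omega/m_\eta\big)$. Taking $C$ to be the maximum of the three constants produced above proves the lemma. The only genuinely delicate point is the boundary layer: one must fix a single $\eta$ for which the normal coordinate $x=y+d(x)\,\mb n(y)$ is a valid $C^3$ change of variables, the lower gradient bound $m$ holds, and the smallness $d\leq m/M_2$ is in force. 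The compactness of $\partial\Omega$ together with the $C^3$ regularity of both $\partial\Omega$ and $\varphi$ is exactly what makes the Taylor remainder uniform and permits this simultaneous choice.
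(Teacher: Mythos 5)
Your proof is correct, and it splits the problem exactly as the paper does: the upper bound $\varphi\leq C\dist(\cdot,\partial\Omega)$ is obtained in essentially the same way (mean value/fundamental theorem of calculus along the normal segment joining $x$ to its boundary projection, which stays in $\overline{\Omega}$ near the boundary), and the region $\{\dist(\cdot,\partial\Omega)\geq \eta\}$ is dispatched by the same compactness argument. Where you genuinely diverge is the lower bound, which is the delicate half. The paper never invokes the tubular-neighborhood decomposition $x=y+d(x)\,\mb n(y)$, nor second derivatives of $\varphi$, nor the fact that $\nabla\varphi$ is normal to $\partial\Omega$: instead it runs the normalized gradient flow $\dot Y=-\nabla\varphi(Y)/|\nabla\varphi(Y)|^2$ starting from $x$, observes that $\varphi$ decreases along it at unit rate so that the trajectory reaches $\partial\Omega$ at time exactly $T_x=\varphi(x)$, and then bounds $\dist(x,\partial\Omega)$ by the arc length $\int_0^{\varphi(x)}|\nabla\varphi(Y(s,x))|^{-1}\,\dd s\leq \varphi(x)/\delta_2$, where $\delta_2$ is a lower bound for $|\nabla\varphi|$ near the boundary. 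Your route --- foot-point projection, the identity $\nabla\varphi(y)=|\nabla\varphi(y)|\,\mb n(y)$ on $\partial\Omega$ (a consequence of $\varphi\equiv 0$ there), and a second-order Taylor expansion with the smallness condition $\eta\leq m/M_2$ --- is equally valid under Assumption~\ref{assum:lake}, since both $\partial\Omega$ and $\varphi$ are $C^3$, and it has the merit of being entirely classical, with explicit constants $m/2$ and $\|\nabla\varphi\|_{L^\infty}$ and no auxiliary ODE. What the paper's flow argument buys is robustness in regularity: it needs only $|\nabla\varphi|\geq \delta_2$ on a neighborhood of $\partial\Omega$ and local Lipschitz continuity of $\nabla\varphi$ away from its zero set, not a $C^2$ tubular neighborhood, not a bound on $\|D^2\varphi\|_{L^\infty}$, and not the normality of $\nabla\varphi$ on the boundary; your argument would have to be reworked if $\partial\Omega$ or $\varphi$ were only $C^1$, whereas the paper's lower-bound argument would survive essentially unchanged. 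Conversely, your version avoids the paper's construction of the nested compact sets $K_1\Subset K_2$ (needed there to prevent the flow from stopping at an interior critical point of $\varphi$), which is the one slightly fussy step of their proof.
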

\begin{proof}
We first prove that there exist $\delta>0,C>0$ such that
\begin{equation}\label{ineq:d-phi}
\frac1C\dist(x,\partial\Omega)\leq \varphi(x) \leq C\dist(x,\partial\Omega)\quad \text{for all } x\in \Omega\setminus \Omega_{\delta},
\end{equation}
where we recall that $\Omega_\delta=\{x\::\:\text{dist}(x,\partial\Omega)\geq \delta\}$ according to the definition \eqref{eq:Omega delta}.

The right-hand side inequality in \eqref{ineq:d-phi} is clear by the $C^1$ regularity of $\partial\Omega$, which implies that there exists $\delta_{1}>0$ such that we can define the orthogonal projection $p(x)$ onto $\partial\Omega$ for all $x\in \Omega\setminus \Omega_{\delta_1}$. Hence by the mean value theorem we get
\[
\varphi(x) = |\varphi(x)-\varphi(p(x))|\leq \max_{y\in [p(x),x]} |\nabla \varphi(y)| |x-p(x)| \leq \| \nabla \varphi \|_{L^\infty} \dist(x,\partial\Omega).
\]

For the left-hand side inequality in \eqref{ineq:d-phi}, we use $\nabla \varphi\neq 0$ on $\partial\Omega$, hence by continuity of $\varphi$ and compactness of $\partial\Omega$, there exists a simply connected compact set $K_{1}\Subset \Omega$ and $\delta_{2}>0$ such that $|\nabla \varphi|\geq \delta_{2}$ on $\Omega\setminus K_{1}$. As $ \varphi = 0$ on $\partial\Omega$
 and $\varphi>0$ on $\Omega$, there exists a simply connected compact set $K_{1}\Subset K_{2}\Subset \Omega$ such that $\min_{\partial K_{1}} \varphi > \sup_{\Omega\setminus K_{2}}\varphi$.

For any $x\in \Omega\setminus K_{1}$, we consider $Y(\cdot,x)\in C^1([0,T_{x}), \Omega\setminus K_{1})$ the unique solution of 
\[
\frac{\dd}{\dd s} Y(s,x)=-\frac{\nabla \varphi(Y(s,x))}{|\nabla \varphi(Y(s,x))|^2},\quad Y(0,x)=x,
\]
where $T_{x}$ is either infinite or corresponds to the time when the trajectory reaches the boundary: $Y(T_{x},x)\in \partial(\Omega\setminus K_{1})= \partial\Omega \cup \partial K_{1}$.

For all $x\in \Omega\setminus K_{2}$, we state that $T_{x}=\varphi(x)$ and that $Y(T_{x},x)\in \partial\Omega$. Indeed, we have $\displaystyle \frac{\dd}{\dd s} \varphi(Y(s,x))=-1$ on $(0,T_{x})$ so the function $s\mapsto \varphi(Y(s,x))$ is a decaying function, and it is not possible that $Y(T_{x},x)\in \partial K_{1}$ because $\min_{\partial K_{1}} \varphi > \varphi(x)$. So the only restriction on $T_{x}$ is to reach $\partial\Omega$ which is the case only when $T_{x}=\varphi(x)$, because we recall that $\Omega=\{\varphi\neq 0\}$ and $\partial\Omega=\{\varphi= 0\}$.

Considering this trajectory implies that for all $x\in \Omega\setminus K_{2}$, we have 
\begin{equation*}
\dist(x,\partial\Omega)\leq |x-Y(T_{x},x)|\leq \int_0^{T_x=\varphi(x)}\frac{ds}{|\nabla \varphi(Y(s,x))|}\leq \frac1{\delta_{2}} \varphi(x).
\end{equation*}
We end the proof of \eqref{ineq:d-phi} by considering $\delta=\min (\delta_{1},\delta_{3})$ where $\delta_{3}>0$ is small enough such that $K_{2}\subset \Omega_{\delta_{3}}$.

We have just proved that the functions $x\mapsto \frac{\varphi(x)}{\dist(x,\partial\Omega)}$ and $x\mapsto \frac{\dist(x,\partial\Omega)}{\varphi(x)}$ are bounded in a neighborhood $\Omega\setminus \Omega_\delta$ of the boundary, but it is clear that they are also bounded in $\Omega_{\delta}$, this finishes the proof of the lemma.
\end{proof}

\begin{proposition}\label{prop:transport-appendix}
Let $T>0$ and let $(v,\omega)$ be the weak solution of the lake equations as in Proposition~\ref{prop:well-posed} with initial condition $\omega^0$ on $[0,T]$. For this velocity field $v$, consider the linear transport equation
\begin{equation}\label{eq:transport-appendix}
\partial_t \rho +v \cdot \nabla \rho=0, \quad \rho(0)=\omega^0,\quad \text{on } \Omega
\end{equation}
and the linear continuity equation 
\begin{equation}\label{linear-cont}
 \partial_t(b\rho)+\diver(b v \rho)=0,\quad \rho(0)=\omega^0 \quad \text{on } \Omega.
\end{equation}
Then $\omega$ is the unique distributional bounded solution on $[0,T]\times \Omega$ of \eqref{eq:transport-appendix} and \eqref{linear-cont}\footnote{Note that here, we consider test functions that are compactly supported on $\Omega$, while the test functions are not necessarily compactly supported in the weak formulation given by Proposition~\ref{prop:well-posed}}. Moreover, $|\omega|^p$ is also a distributional bounded solution for any $p> 1$.

Finally, we have 
$\|b^{1/p}\omega(t,\cdot)\|_{L^p}=\|b^{1/p}\omega^0\|_{L^p}$ for $t\in [0,T]$ and $p\in [1,\infty)$.
\end{proposition}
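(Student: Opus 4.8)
The plan is to deduce all four assertions from the DiPerna--Lions renormalization theory, which applies without difficulty on compact subsets of $\Omega$ where $b$ is bounded below, and to treat the behavior at $\partial\Omega$ separately via a cutoff adapted to $\varphi$ that exploits the tangency of $v$. First I would record the equivalence of \eqref{eq:transport-appendix} and \eqref{linear-cont}: for $\phi\in C^1_c([0,T)\times\Omega)$ the function $\Phi:=\phi/b$ is again compactly supported and $C^1$, since $b\in C^3$ and $b>0$ on $\supp\phi$, and using $\diver v=-v\cdot\nabla b/b$ one checks the pointwise identity $b(\partial_t\Phi+v\cdot\nabla\Phi)=\partial_t\phi+\diver(v\phi)$. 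As $\phi\mapsto\phi/b$ is a bijection of $C^1_c([0,T)\times\Omega)$, the two weak formulations coincide; in particular the weak formulation of Proposition~\ref{prop:well-posed}(ii), restricted to compactly supported test functions, is exactly \eqref{linear-cont}, so $\omega$ is a bounded distributional solution of both \eqref{linear-cont} and \eqref{eq:transport-appendix}.

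Next I would prove the renormalization property. Because the test functions are compactly supported in $\Omega$, the whole argument lives on compact sets $K\Subset\Omega$, where $b\geq c_K>0$, hence $\diver v=-v\cdot\nabla b/b\in L^\infty(K)$, while $v\in W^{1,r}(K)$ for every $r<\infty$ by Proposition~\ref{prop:well-posed}. This is the classical DiPerna--Lions setting: mollifying a bounded solution $\rho$ of \eqref{eq:transport-appendix} and using the commutator lemma $R_\delta:=v\cdot\nabla(\rho*\eta_\delta)-(v\cdot\nabla\rho)*\eta_\delta\to 0$ in $L^1_{\loc}(\Omega)$, one passes to the limit in $\partial_t\beta(\rho*\eta_\delta)+v\cdot\nabla\beta(\rho*\eta_\delta)=\beta'(\rho*\eta_\delta)R_\delta$ to obtain $\partial_t\beta(\rho)+v\cdot\nabla\beta(\rho)=0$ for every $\beta\in C^1(\R)$, with the initial datum $\beta(\rho(0))$. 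By the equivalence above, $\beta(\rho)$ also solves \eqref{linear-cont}. Choosing $\beta(s)=|s|^p$ (and approximating $|s|$ for $p=1$) shows that $|\omega|$ and $|\omega|^p$ are bounded distributional solutions.

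Finally, for uniqueness and the $L^p$ identity I would test the renormalized continuity equation against $\phi_n:=\chi_n(\varphi)$, where $\chi_n$ is smooth, increasing, equal to $0$ on $\{\varphi\leq 1/n\}$ and to $1$ on $\{\varphi\geq 2/n\}$, so that $\phi_n\uparrow 1$ and $\supp\phi_n\Subset\Omega$. For a nonnegative renormalized solution $\sigma$ this gives
\[
\int_\Omega b\,\sigma(t)\,\phi_n\,\dd x-\int_\Omega b\,\sigma(0)\,\phi_n\,\dd x=\int_0^t\!\!\int_\Omega b\,\sigma\,\chi_n'(\varphi)\,(v\cdot\nabla\varphi)\,\dd x\,\dd s.
\]
The left side converges to $\int_\Omega b\sigma(t)-\int_\Omega b\sigma(0)$ by dominated convergence. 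For the right side, the map $x\mapsto v(x)\cdot\nabla\varphi(x)$ is continuous on $\overline\Omega$ and vanishes on $\partial\Omega$, because $\nabla\varphi$ is parallel to $\mb n$ there (as $\partial\Omega=\{\varphi=0\}$ with $\nabla\varphi\neq 0$) and $v\cdot\mb n=0$; hence its supremum on $\{\varphi\leq 2/n\}$ tends to $0$, while $\int_\Omega|\chi_n'(\varphi)|\,\dd x$ stays bounded in $n$ by the coarea formula together with $|\nabla\varphi|\geq c>0$ near $\partial\Omega$. Thus the right side tends to $0$ and $\int_\Omega b\sigma(t)=\int_\Omega b\sigma(0)$. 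Applying this to $\sigma=|\rho_1-\rho_2|$ with zero initial data forces $\rho_1=\rho_2$ since $b>0$ on $\Omega$, proving uniqueness; applying it to $\sigma=|\omega|^p$ yields $\|b^{1/p}\omega(t,\cdot)\|_{L^p}=\|b^{1/p}\omega^0\|_{L^p}$.

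The hard part is precisely this last limit. Since $\diver v$ blows up like $\dist(\cdot,\partial\Omega)^{-1}$ near $\partial\Omega$ (recall $b\sim\varphi^\alpha\sim\dist(\cdot,\partial\Omega)^\alpha$ by Lemma~\ref{lem:d-phi}), one cannot integrate the equation up to the boundary in the usual way, and the renormalization alone, valid only on compact subsets of $\Omega$, does not control the boundary flux. What makes the flux negligible is the degenerate-divergence structure analyzed by Desjardins, here concretely encoded in the fact that $v\cdot\nabla\varphi\to 0$ at $\partial\Omega$, which is exactly what lets the cutoff $\phi_n$ recover the correct conservation law.
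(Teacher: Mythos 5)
Your proof is correct, but it takes a genuinely different route from the paper's on the key point of uniqueness and renormalization. The paper first establishes the quantitative boundary estimate \eqref{eq:app}, $|v\cdot\nabla\varphi|\leq -C\varphi\ln\varphi$ (from the log-Lipschitz regularity of $v$ and the tangency $v\cdot\mb n=0$, via projection onto $\partial\Omega$), deduces $|\diver v(x)|\leq C(1-\ln\dist(x,\partial\Omega))$ and hence $\exp(T_0|\diver v|)\in L^1([0,T]\times\Omega)$ for $T_0$ small, and then invokes Desjardins' result \cite[Lemma 3]{Desjardins} to obtain in one stroke both uniqueness of bounded distributional solutions and the renormalization property; the $L^p$ identity then follows from the cutoff $\Phi_R=(1-\chi_0)(R\varphi)$, whose flux is bounded by $C|\ln R|/R$ using \eqref{eq:app} once more. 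You instead observe that renormalization is a purely local matter (on compact subsets of $\Omega$ one has $b$ bounded below, $\diver v\in L^\infty$ and $v\in W^{1,r}$, so classical DiPerna--Lions \cite{Dip} applies), and you prove uniqueness directly by the same cutoff mechanism as the $L^p$ identity: testing the $b$-weighted equation \eqref{linear-cont} produces no zeroth-order divergence term at all (the anelastic structure), and the boundary flux vanishes by the soft argument that $v\cdot\nabla\varphi$ is continuous on $\overline\Omega$, vanishes on $\partial\Omega$, while $\int_\Omega|\chi_n'(\varphi)|\,\dd x=O(1)$. Your route is more self-contained — no exponential-integrability computation, no appeal to \cite{Desjardins} — and it makes transparent why the degenerate divergence is harmless for this particular proposition; what it forgoes is any rate, and note that the paper needs \eqref{eq:app} anyway for Propositions~\ref{prop:support-appendix} and \ref{prop:distance-evol}, so Desjardins' hypotheses come essentially for free there. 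Two points you should make explicit to fully close your argument: first, the smallness of $\sup|v\cdot\nabla\varphi|$ over the strip $\{\varphi\leq 2/n\}$ must be uniform in $t\in[0,T]$, which follows from $v\in C([0,T];W^{1,r}(\Omega))\hookrightarrow C([0,T];C(\overline\Omega))$ (Proposition~\ref{prop:well-posed}) or from the time-uniform log-Lipschitz constant; second, when you write $\sigma(0)=0$ for $\sigma=|\rho_1-\rho_2|$, you are using that renormalization preserves the initial trace, a standard but non-trivial step of DiPerna--Lions theory (for instance, extend the solution to $t<0$ by its initial datum and renormalize across $t=0$), and it is the one place where a careful reader could object.
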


\begin{proof}
Let $\delta>0$ defined in \eqref{ineq:d-phi}. For all $x\in \Omega\setminus \Omega_{\delta}$, by the proof of Lemma~\ref{lem:d-phi}, we may define the orthogonal projection $p(x)$ onto $\partial\Omega$. We write
\begin{multline*}
 \nabla \varphi(x)\cdot v(t,x)
= \left(v(t,x)-v(t,p(x)\right)\cdot \nabla \varphi(x)\\
+v(t,p(x))\cdot \left(\nabla \varphi(x)-\nabla \varphi(p(x))\right)
\end{multline*}
where we have used the fact that $v\cdot \textbf{n} = 0= v\cdot \nabla \varphi$ on $\partial\Omega$. Therefore we obtain by the log-Lipschitz regularity of $v$ and the Lipschitz regularity of $\nabla \varphi$ that for all $(t,x)\in [0,T]\times (\Omega\setminus \Omega_{\delta})$:
\begin{align*}
|\nabla \varphi(x)\cdot v(t,x)|
&\leq C |x-p(x)|\left(1+\left|\ln |x-p(x)|\right|\right)\\
&\leq C \varphi(x)(1+|\ln \varphi(x)|)=C \varphi(x)(1-\ln \varphi(x)) \\
&\leq -C \varphi(x) \ln \varphi(x) ,
\end{align*} where $C$ depends on $\|\omega\|_{L^\infty}$
 and where we have used that $\varphi\leq 1/2$. As the function $x\mapsto -\varphi(x) \ln \varphi(x)$ is greater than a positive constant on $\Omega_{\delta}$, we have that the previous inequality holds also true in $\Omega_{\delta}$:
\begin{equation}\label{eq:app}
|\nabla \varphi(x)\cdot v(t,x)|
\leq -C \varphi(x) \ln \varphi(x) , \quad \forall (t,x)\in [0,T]\times \Omega.
\end{equation}

Recalling $b=c\varphi^\alpha$ so $\nabla b= \varphi^\alpha \nabla c+\alpha c \varphi^{\alpha-1}\nabla \varphi$, the first consequence of \eqref{eq:app} is to estimate $\diver v$ as follows
 \begin{align*}
 | \diver v(x)|&=\Big|\frac{\nabla b(x)}{b(x)} \cdot v(x)\Big|
\leq C+C\Big|\frac{\nabla \varphi(x)}{\varphi(x)} \cdot v(x)\Big|
\end{align*}so that
 \begin{align}\label{ineq:bounded-div}
 | \diver v(x)| &\leq C (1 -\ln \varphi(x)) \leq C (1 -\ln \dist(x,\partial\Omega)),
\end{align}
from which it follows that
 \begin{equation*}
\exp(T_0|\diver v(t,x)|) \leq \exp(CT_0) \dist(x,\partial\Omega)^{-CT_0}\in L^1([0,T]\times \Omega)
\end{equation*}
 for $T_0$ sufficiently small.
We may therefore apply the result by {\sc B. Desjardins} \cite[Lemma 3]{Desjardins}: the {linear} transport equation \eqref{eq:transport-appendix}
has a unique distributional bounded solution on $[0,T]\times \Omega$ and this solution is renormalized (see the remark just after \cite[Theorem 3]{Desjardins}): for all $\beta\in C^1(\R)$ bounded, $\beta(\omega)$ is also a weak bounded solution.

We infer that the corresponding {linear} continuity equation \eqref{linear-cont}
also has the renormalization property on $[0,T]\times \Omega$ since the distributional formulations of these equations are equivalent on $[0,T]\times\Omega$ (noticing that $b>0$ and $b\in C^1$ on $\Omega$).

Let $p\in (1,\infty)$. Setting $\beta\in C^1$ such that $\beta(s)=s^p$ for $|s|\leq \|\omega\|_{L^\infty}$ and $\beta$ constant for $|s|\geq 2\|\omega\|_{L^\infty}$, hence $\beta(\omega)=|\omega|^p$, we obtain that $|\omega|^p$ satisfies \eqref{linear-cont} in the sense of distributions on $[0,T]\times \Omega$.

Finally, we introduce a smooth, non-increasing function $\chi_0:\R \to [0,1]$ which is identically one on $[0,1/4]$ and vanishes on $[1/2,1)$ and we take as a test function 
$$
\Phi_R(x)=(1-\chi_0) \left(R\varphi(x)\right)
$$
which is compactly supported in $\Omega$ for all $R>0$. Observe that for $1/4\leq R\varphi(x)\leq 1/2$ we have by using \eqref{eq:app}
\begin{align*}
b(x)|\omega|^p(t,x)|v(x)\cdot \nabla \Phi_R(x)|&\leq R b(x)|\omega|^p(t,x)| |v(x)\cdot \nabla \varphi(x)| |\chi_0'(R\varphi(x))|\\
&\leq C\|\omega\|_{L^\infty}^p \|b\|_{L^\infty} (R \varphi(x)) |\ln \varphi(x)| |\chi_0'(R\varphi(x))| \\
& \leq C | \ln R| \mathds{1}_{\dist(x,\partial\Omega)\leq C/R}
\end{align*}
which implies that
\[
\int b(x)|\omega|^p(t,x)|v(x)\cdot \nabla \Phi_R(x)|\, \dd x\leq \frac{C|\ln R|}R \to 0
\]
as $R\to \infty$.
Therefore $\int_\Omega b(x)|\omega|^p(t,x)\, \dd x = \int_\Omega b(x)|\omega^0|^p(x)\, \dd x.$

As this equality holds for any $p>1$, it is enough to consider a sequence $p_{n}=1+\frac1n$ to state that it holds also true for $p=1$, just by using the dominated convergence theorem.
\end{proof}

\begin{proposition}\label{prop:support-appendix}
Let $\omega^0\in L^\infty(\Omega)$ with compact support in $\Omega$. Let $(v,\omega)$ be the unique weak solution of the lake equations as in Proposition~\ref{prop:well-posed} with initial condition $\omega^0$.
There exists a compact subset $K_T$ of $\Omega$, depending only on $\|\omega^0\|_{L^\infty}$, $\delta_{0}$ and $T$, such that 
\[
\supp \omega(t,\cdot)\subset K_T, \quad \forall t\in [0,T].
\]
\end{proposition}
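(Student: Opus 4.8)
The plan is to reduce the statement to a confinement estimate for the Lagrangian trajectories. Since $v$ is log-Lipschitz on $\overline\Omega$, the modulus of continuity $r\mapsto r(1+|\ln r|)$ satisfies the Osgood condition, so $v$ generates a unique continuous flow, and the tangency condition $v\cdot\mb n=0$ on $\partial\Omega$ confines each trajectory to $\overline\Omega$. The push-forward $\omega^0(X(0,t,\cdot))$ is then a bounded distributional solution of the transport equation \eqref{eq:transport-appendix}, and by the uniqueness established in Proposition~\ref{prop:transport-appendix} it coincides with $\omega$. Consequently $\supp\omega(t,\cdot)=X(t,\supp\omega^0)$, and it suffices to produce $\delta_T>0$, depending only on $\|\omega^0\|_{L^\infty}$, $\delta_0$ and $T$, such that every trajectory issued from $\supp\omega^0$ stays at distance at least $\delta_T$ from $\partial\Omega$ on $[0,T]$.

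The core of the argument is a one-dimensional differential inequality for $\varphi$ along trajectories, in the spirit of a barrier estimate. Fixing $x_0\in\supp\omega^0$ and setting $g(t):=\varphi(X(t,x_0))$, differentiation and the key bound \eqref{eq:app} give
\[
|g'(t)|=\bigl|\nabla\varphi(X(t,x_0))\cdot v(t,X(t,x_0))\bigr|\leq -C\,g(t)\ln g(t)=C\,g(t)\ln\tfrac1{g(t)},
\]
where $C$ depends only on $\|\omega\|_{L^\infty([0,T]\times\Omega)}=\|\omega^0\|_{L^\infty}$, $b$ and $\Omega$. Since $\varphi\leq 1/2$ we have $\ln\frac1{g}>0$, so $h(t):=\ln\frac1{g(t)}\geq0$ satisfies $|h'(t)|=|g'(t)|/g(t)\leq C\,h(t)$, whence $h(t)\leq h(0)e^{Ct}$ by Gronwall, that is $g(t)\geq g(0)^{\,e^{Ct}}$. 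By Lemma~\ref{lem:d-phi}, $g(0)=\varphi(x_0)\geq \dist(x_0,\partial\Omega)/C\geq \delta_0/C>0$, and therefore
\[
\varphi(X(t,x_0))\geq (\delta_0/C)^{\,e^{CT}}=:\varphi_T>0,\qquad t\in[0,T].
\]

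Applying Lemma~\ref{lem:d-phi} once more gives $\dist(X(t,x_0),\partial\Omega)\geq \varphi_T/C=:\delta_T$, depending only on $\|\omega^0\|_{L^\infty}$, $\delta_0$ and $T$. Setting $K_T:=\Omega_{\delta_T}$, which is a compact subset of $\Omega$ because $\overline\Omega$ is compact and $\partial\Omega=\{\varphi=0\}$, we conclude $\supp\omega(t,\cdot)\subset K_T$ for all $t\in[0,T]$. I expect the main obstacle to be not the Gronwall step, which is elementary, but the rigorous justification of the chain rule $\frac{\dd}{\dd t}\varphi(X(t,x_0))=\nabla\varphi(X(t,x_0))\cdot v(t,X(t,x_0))$, which presupposes that $t\mapsto X(t,x_0)$ is a genuine integral curve of $v$; establishing existence, uniqueness and confinement of such curves for a merely log-Lipschitz (not Lipschitz) field is the delicate point, handled via the Osgood criterion together with the tangency of $v$. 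Once this is in place, the barrier argument driven by \eqref{eq:app} is straightforward; alternatively one could avoid the pointwise flow altogether and run the same confinement at the level of the measure $b\,\omega(t,\cdot)\,\dd x$ by testing \eqref{linear-cont} against a function of $\varphi$ supported near $\partial\Omega$, as in the proof of Proposition~\ref{prop:transport-appendix}.
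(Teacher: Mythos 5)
Your Lagrangian route has a genuine gap at its very first step: the claim that the push-forward $\omega^0(X(0,t,\cdot))$ is a bounded distributional solution of \eqref{eq:transport-appendix}, hence equal to $\omega$ by uniqueness. This is exactly the point the paper singles out as delicate: as noted just before Proposition~\ref{prop:last}, the fact that $\omega$ is transported by the flow ``is not obvious due to the singularity of $\diver v$ on the boundary''. Osgood's criterion does give a unique continuous flow, but continuity of $X$ in $x$ is not enough to verify the distributional identity for the composition $\omega^0\circ X(0,t,\cdot)$: one needs quantitative control of the image measure of the flow (a compressibility/Jacobian bound), i.e.\ of $\diver v=-\nabla b\cdot v/b$, which blows up at $\partial\Omega$ by \eqref{ineq:bounded-div}. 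The paper obtains the identification (Proposition~\ref{prop:last}) by truncating the field to $\tilde v=v\chi$, which has bounded divergence on all of $\R^2$, and invoking DiPerna--Lions \cite{Dip}; but that argument \emph{uses} Proposition~\ref{prop:support-appendix}: one must already know that $\supp\omega(t,\cdot)$ stays inside the region where $\tilde v=v$ in order to assert that the Eulerian solution $\omega$ solves the truncated equation and to match the two flows. Your plan runs this logic backwards (identify first, confine afterwards), so as written it is circular relative to the only available proof of the identification. Your Gronwall estimate itself is correct --- it is precisely Proposition~\ref{prop:distance-evol}, giving $\varphi(X(t,t_0,x))\geq\varphi(x)^{e^{CT}}$ --- but it is applied to trajectories whose relation to $\omega$ has not been established.

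The alternative you mention in your last sentence is essentially the paper's actual proof, and it is what should carry the argument; it needs two corrections to your sketch. First, you cannot run the confinement on the signed measure $b\,\omega(t,\cdot)\,\dd x$: vanishing of $\int\chi\, b\omega\,\dd x$ does not control $\supp\omega$, because of cancellations. The paper instead uses the renormalization property of Proposition~\ref{prop:transport-appendix} to work with the nonnegative quantity $b\,\omega^2$, for which $\int\chi\, b\omega^2\,\dd x=0$ genuinely forces the support out of $\{\chi=1\}$. Second, a fixed cutoff ``supported near $\partial\Omega$'' does not suffice: since \eqref{eq:app} only gives $|v\cdot\nabla\varphi|\leq -C\varphi\ln\varphi$, the vorticity may drift toward the boundary, and one needs a time-dependent barrier $\chi_0\bigl(\varphi(x)/r(t)\bigr)$ with $r(t)=\exp\bigl(-e^{8C(t+s_0)}\bigr)$ shrinking fast enough that the transport term acquires a sign; this double-exponential rate is the Eulerian counterpart of the bound $\varphi\gtrsim\varphi(0)^{e^{Ct}}$ from your Gronwall step.
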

We recall that $\delta_{0}$ is defined by $\dist(\supp \omega_{0},\partial\Omega)$ in Proposition~\ref{prop:transport}.
\begin{proof} We set
$$
\delta_1:=\min\Big(\frac{e^{-1}}2\ ;\ \inf_{\supp \omega^0 } \varphi \Big).
$$

By Proposition~\ref{prop:transport-appendix}, $\omega^2$ is a distributional bounded solution of \eqref{linear-cont} with initial condition $(\omega^0)^2$. In view of the time-regularity properties of $\omega$ stated in Proposition~\ref{prop:well-posed}, we obtain that $\omega^2$ satisfies the following weak formulation for all $t\in \R_+$ and for all test function $\Phi\in C^1_c([0,T]\times \Omega)$:
\begin{multline*}
\int_{\Omega} \Phi(t,x) (b\omega^2)(t,x) \,\dd x - \int_{\Omega} \Phi(0,x) (b(\omega^0)^2)(x) \,\dd x \\
 = \int_{0}^t \int_{\Omega} ( b\omega^2)(s,x) (\partial_t \Phi+v\cdot \nabla \Phi)(s,x) \,\dd x\dd s.
 \end{multline*}
Moreover, we may assume that this formulation holds for any test function $\Phi\in C^1([0,T]\times \Omega)$: this is established by arguing as in the proof of Proposition~\ref{prop:transport-appendix} namely replacing $\Phi$ by $\Phi \Phi_{R}\in C_c^1$ and letting $R\to \infty$. 

Now we introduce as before a smooth, non-increasing function $\chi_0:\R \to [0,1]$ which is identically one on $[0,1/4]$ and vanishes on $[1/2,1)$ and we set
\[
\chi(t,x)=\chi_0\left( \frac{\varphi(x)}{r(t)}\right)
\]
with $r(t)$ a decreasing $C^1$ function less than one to be determined later on. 
Using the weak formulation for $\omega^2$ with test function given by $\chi$, we obtain for any $t\in \R_+$ 
\begin{align*}
&\int_{\Omega} \chi(t,x) (b\omega^2)(t,x) \,\dd x - \int_{\Omega} \chi(0,x) (b(\omega^0)^2)(x) \,\dd x \\
& = \int_{0}^t \int_{\Omega} ( b\omega^2)(s,x) \chi_0'\left( \frac{\varphi(x)}{r(s)}\right)\frac{1}{r(s)}\left(-\frac{r'(s)}{r(s)}\varphi(x) + \nabla \varphi(x)\cdot v(s,x) \right) \,\dd x\dd s\\
&= \int_{0}^t \int_{\Omega} ( b\omega^2)(s,x) \left|\chi_0'\left( \frac{\varphi(x)}{r(s)}\right)\right|\frac{1}{r(s)}\left(\frac{r'(s)}{r(s)}\varphi(x) -\nabla \varphi(x)\cdot v(s,x) \right) \,\dd x\dd s.
 \end{align*}
We infer from \eqref{eq:app} that for $\frac{r(s)}{4}\leq \varphi(x)\leq \frac{ r(s)}{2}$
\[
\frac{r'(s)}{r(s)}\varphi(x) -\nabla \varphi(x)\cdot v(s,x) \leq \frac{r'(s)}{4} +C r(s)(1-\ln r(s)).
\]
Setting 
\[
r(s)=\exp\Big(- e^{8C(s+s_{0})}\Big)
\]
where $s_{0}\geq 0$ is chosen such that $r(0)= 2 \delta_1$, which uniquely exists because $2\delta_{1} \leq e^{-1}$, namely $s_{0}=\frac1{8C}\ln(-\ln(2\delta_{1}))$. With this expression, it is clear that $r$ is decaying, less than one, and verifies for any $s\geq 0$
\[
 \frac{r'(s)}{4} +C r(s)(1-\ln r(s)) =Cr(s) \Big( -2 e^{8C(s+s_{0})}+ 1 +e^{8C(s+s_{0})}\Big)\leq 0,
\]
where we have used that $s_{0}\geq 0$. For such a function $r$, this implies
\[
\int_{\Omega} \chi(t,x) (b\omega^2)(t,x) \,\dd x \leq \int_{\Omega} \chi(0,x) (b(\omega^0)^2)(x) \,\dd x =0
\]
because for any $x\in \supp \omega^0$, $\varphi(x)\geq \delta_{1} =r(0)/2$, hence $ \chi(0,x)=0$.
From this inequality, we deduce that 
$\supp \omega(t,\cdot)\subset \{x : \varphi(x)\geq \frac{1}{4}r(t)\}.$ 
The conclusion follows from Lemma~\ref{lem:d-phi}.

Note that such arguments establishing support properties without requiring the notion of flow as been already used by the two last authors in \cite{lacave-miot}.
\end{proof}

Next, the regularity assumptions on $v$ in $[0,T]\times \Omega$ allow one to define the flow in the classical sense: for any $x \in \Omega$ and $t_0$, there exist $0\leq t_{1} <t_{0}<t_{2}\leq T$ and a unique characteristic curve $X(\cdot, t_0,x)\in C^1([t_{1},t_{2}] ; \Omega)$ solving
\[
\frac{\dd X(t,t_0,x)}{\dd t}=v(t,X(t,t_0,x)), \quad X(t_0,t_0,x)=x.
\]

To prove that we can choose $t_{1}=0$ and $t_{2}=T$, we need to establish that the trajectory starting from $x\in \Omega$ cannot reach the boundary in finite time. 

\begin{proposition}\label{prop:distance-evol}
Let $(v,\omega)$ be the weak solution of the lake equations as in Proposition~\ref{prop:well-posed} with initial condition $\omega^0$. There exists $\beta>1$ depending only on $\|\omega^0\|_{L^\infty}$ and $T$ such that
for any $x\in \Omega$
\[
\varphi(X(t,t_0,x))\geq \varphi(x)^{\beta}, \quad \forall t_{0}\in [0,T],\ \forall t\in [t_{1},t_{2}].
\]
\end{proposition}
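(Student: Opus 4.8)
The plan is to run a Grönwall-type comparison argument along the characteristic, exploiting the a priori estimate \eqref{eq:app} that controls the normal component of the velocity near the boundary. Fix $x\in\Omega$ and $t_0\in[0,T]$, and work on the maximal interval $[t_1,t_2]$ on which the $C^1$ characteristic $X(\cdot,t_0,x)$ is defined with values in $\Omega$. Since $\varphi\in C^3(\overline\Omega)$ and $t\mapsto X(t,t_0,x)$ is $C^1$, the scalar function $g(t):=\varphi(X(t,t_0,x))$ is $C^1$ and the chain rule gives $g'(t)=\nabla\varphi(X(t,t_0,x))\cdot v(t,X(t,t_0,x))$, with $g(t_0)=\varphi(x)$.

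Applying \eqref{eq:app} at the point $X(t,t_0,x)\in\Omega$, and recalling that $\varphi\le 1/2$ on $\overline\Omega$ (so that $-\ln g(t)\ge \ln 2>0$), I would estimate
\[
|g'(t)|\le -C\,g(t)\ln g(t),
\]
where $C$ depends only on $\|\omega\|_{L^\infty([0,T]\times\Omega)}=\|\omega^0\|_{L^\infty}$, the latter equality following from the transport property $\omega(t,x)=\omega^0(X(0,t,x))$. Setting $u(t):=-\ln g(t)>0$, this becomes the linear differential inequality $|u'(t)|\le C\,u(t)$. Since $u$ is strictly positive, $|(\ln u)'|\le C$, whence $|\ln u(t)-\ln u(t_0)|\le C|t-t_0|\le CT$ for every $t\in[t_1,t_2]\subset[0,T]$.

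Exponentiating yields $u(t)\le u(t_0)\,e^{CT}$, that is $-\ln g(t)\le e^{CT}\,(-\ln\varphi(x))$. Choosing $\beta:=e^{CT}>1$, which depends only on $\|\omega^0\|_{L^\infty}$ and $T$ as $C$ does, and taking exponentials once more gives exactly $\varphi(X(t,t_0,x))\ge \varphi(x)^{\beta}$, as claimed. In particular $\varphi$ stays bounded away from $0$ along the characteristic, so the trajectory cannot reach $\partial\Omega=\{\varphi=0\}$ in finite time, which legitimates the choice $t_1=0$, $t_2=T$ announced before the statement.

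As for the difficulty: the essential work is already contained in the log-Lipschitz bound \eqref{eq:app}, and the remaining argument is an elementary ODE comparison. The one point requiring care is that \eqref{eq:app} can only be invoked while $X$ remains in $\Omega$, so the estimate is a priori valid only on $[t_1,t_2]$; but the resulting inequality precisely shows that $\varphi(X)$ cannot vanish, making the continuation to the full interval self-consistent. It is worth emphasizing that the \emph{logarithmic} (rather than Lipschitz) modulus of continuity of $v$ is exactly what produces a power-type lower bound $\varphi(x)^{\beta}$ with an exponent $\beta$ independent of $x$, instead of a mere multiplicative lower bound.
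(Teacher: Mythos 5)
Your proof is correct and follows essentially the same route as the paper: differentiate $\varphi$ along the characteristic, apply \eqref{eq:app} to get $\bigl|\frac{\dd}{\dd t}\varphi(X)\bigr|\leq -C\varphi(X)\ln\varphi(X)$, and integrate the resulting differential inequality for $\ln(-\ln\varphi(X))$ in both time directions to obtain $\beta=e^{CT}$. One caveat: to identify the constant in \eqref{eq:app} with $\|\omega^0\|_{L^\infty}$ you invoke the transport property $\omega(t,x)=\omega^0(X(0,t,x))$, which in the paper is only established afterwards (Proposition~\ref{prop:last}) and whose proof itself relies on the present proposition; this circularity is avoided by instead deducing $\|\omega(t,\cdot)\|_{L^\infty}=\|\omega^0\|_{L^\infty}$ from the already proven identity $\|b^{1/p}\omega(t,\cdot)\|_{L^p}=\|b^{1/p}\omega^0\|_{L^p}$ of Proposition~\ref{prop:transport-appendix} by letting $p\to\infty$.
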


\begin{proof}

This property is proved in \cite[Lemma 4.1]{AlTakiLacave} in the case of smooth vorticity, and we provide a self-contained proof below.

We compute again
\[
\frac{\dd}{\dd t} \varphi(X(t,t_0,x))=v(t,X(t,t_0,x))\cdot \nabla \varphi(X(t,t_0,x))
\]
and therefore we obtain by \eqref{eq:app}
\begin{equation}\label{ine:distance-evol}
\Big|\frac{\dd}{\dd t} \varphi(X(t,t_0,x))\Big| \leq - C\varphi(X(t,t_0,x))\ln \varphi(X(t,t_0,x)),\quad t\in [t_{1},t_{2}] 
\end{equation}
which implies that
\[
\frac{\dd}{\dd t} \ln \Big(- \ln \varphi(X(t,t_0,x))\Big) \leq C,\quad t\in [t_{1},t_{2}]
\]
hence
\[
\varphi(X(t,t_0,x))\geq \varphi(x)^{e^{CT}} ,\quad t\in [t_{0},t_{2}].
\]
On the other hand, \eqref{ine:distance-evol} also gives
\[
\frac{\dd}{\dd t} \ln \Big(- \ln \varphi(X(t,t_0,x))\Big) \geq -C,\quad t\in [t_{1},t_{2}]
\]
hence
\[
\varphi(X(t,t_0,x))\geq \varphi(x)^{e^{CT}} ,\quad t\in [t_{1},t_{0}]
\]
from which the conclusion follows.
\end{proof}

This proposition means that we extend the solution in order to get $X(\cdot, t_0,x)\in C^1([0,T] ; \Omega)$. By uniqueness, we have that $X(t,t_0,\cdot)$ is an homeomorphism for any $t,t_0\in [0,T]$, with $X(t,t_0,\cdot)^{-1}=X(t_0,t,\cdot)$.

Note that at this stage we never use that $\omega$ is transported by the flow, which is not obvious due to the singularity of $\diver v$ on the boundary. This property is the content of the following:

\begin{proposition}\label{prop:last}
Let $\omega^0\in L^\infty(\Omega)$ with compact support in $\Omega$.
Let $(v,\omega)$ be the unique weak solution of the lake equations as in Proposition~\ref{prop:well-posed} with initial condition $\omega^0$. Then we have $\omega(t,y)=\omega^0(X(0,t,y))$ for all $y\in \Omega$ and $t\in [0,T]$.
\end{proposition}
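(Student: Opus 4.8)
The plan is to define the transported field $\tilde\omega(t,y):=\omega^0(X(0,t,y))$ and to prove that it is a bounded distributional solution of the linear transport equation \eqref{eq:transport-appendix} with initial datum $\omega^0$; since Proposition~\ref{prop:transport-appendix} guarantees uniqueness of such a solution and $\omega$ is one, this forces $\tilde\omega=\omega$, which is the claim. The function $\tilde\omega$ is bounded by $\|\omega^0\|_{L^\infty}$, and by Proposition~\ref{prop:support-appendix} together with Proposition~\ref{prop:distance-evol} both $\omega(t,\cdot)$ and $\tilde\omega(t,\cdot)$ are supported in a compact set $K\Subset\Omega$, uniformly for $t\in[0,T]$ (trajectories issued from $\supp\omega^0$ stay in $\{\varphi\ge(\inf_{\supp\omega^0}\varphi)^\beta\}$). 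Consequently all integrals below localize to a fixed compact neighbourhood of $K$, on which $b\ge c>0$ and $b\in C^1$; there the transport and continuity formulations are equivalent, since $\partial_t b=0$ and $\diver(bv)=0$ give $\partial_t(b\rho)+\diver(bv\rho)=b(\partial_t\rho+v\cdot\nabla\rho)$ for bounded $\rho$.

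The heart of the matter is to show that the classical flow preserves the reference measure $b\,\dd x$, i.e.
\[
\int_\Omega g\big(X(t,0,\xi)\big)\,b(\xi)\,\dd\xi=\int_\Omega g(x)\,b(x)\,\dd x,\qquad g\in C_c(\Omega),\ t\in[0,T].
\]
This is the expected consequence of $\diver(bv)=0$, but it is not immediate here because $v$ is only log-Lipschitz, so the flow need not be $C^1$ in space and the Jacobian change of variables is not directly licit. The hard part is precisely this regularity gap, and I would circumvent it by approximation in the interior. Writing $bv=\nabla^\perp\Psi+\sum_k c_k\nabla^\perp\psi^k$ as in \eqref{BS:is}, I would mollify the stream functions to obtain smooth fields $bv_\eta:=\nabla^\perp\Psi_\eta+\sum_k c_k\nabla^\perp\psi^k_\eta$, hence $v_\eta:=b^{-1}(bv_\eta)$ smooth and $b$-divergence-free on a neighbourhood of $K$, with $v_\eta\to v$ uniformly there and uniformly log-Lipschitz. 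For the smooth flow $X_\eta$ one has the exact Jacobian identity
\[
\frac{\dd}{\dd t}\Big(b\big(X_\eta(t,0,\xi)\big)\,\det\nabla_\xi X_\eta(t,0,\xi)\Big)=\diver(bv_\eta)\big(X_\eta(t,0,\xi)\big)\,\det\nabla_\xi X_\eta=0,
\]
so $X_\eta$ preserves $b\,\dd x$ exactly; by the Osgood stability of flows under uniformly log-Lipschitz perturbations, $X_\eta\to X$ uniformly on compacts, and passing to the limit yields the displayed invariance for $X$.

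Granting the measure invariance, I would finish as follows. For $\Phi\in C^1_c([0,T]\times\Omega)$ the change of variables $x=X(t,0,\xi)$, together with $\tilde\omega(t,X(t,0,\xi))=\omega^0(\xi)$ and the invariance of $b\,\dd x$, gives
\[
\int_\Omega \Phi(t,x)\,b(x)\,\tilde\omega(t,x)\,\dd x=\int_\Omega \Phi\big(t,X(t,0,\xi)\big)\,\omega^0(\xi)\,b(\xi)\,\dd\xi.
\]
Since $t\mapsto X(t,0,\xi)$ is $C^1$ with $\partial_t X=v(t,X)$ and $\omega^0 b$ is integrable with compact support, differentiating under the integral sign and changing variables back produces
\[
\frac{\dd}{\dd t}\int_\Omega \Phi\,b\,\tilde\omega\,\dd x=\int_\Omega(\partial_t\Phi+v\cdot\nabla\Phi)(t,x)\,b(x)\,\tilde\omega(t,x)\,\dd x,
\]
which, after integration in time (and using $\tilde\omega(0,\cdot)=\omega^0$), is exactly the weak formulation of the continuity equation \eqref{linear-cont} for $\tilde\omega$ with datum $\omega^0$. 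By the equivalence of \eqref{linear-cont} and \eqref{eq:transport-appendix} noted above, $\tilde\omega$ is a bounded distributional solution of \eqref{eq:transport-appendix}, and the uniqueness statement of Proposition~\ref{prop:transport-appendix} then identifies $\tilde\omega$ with $\omega$, completing the proof.
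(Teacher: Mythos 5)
Your proposal is correct, but it follows a genuinely different route from the paper. The paper localizes away from the boundary using Propositions~\ref{prop:support-appendix} and \ref{prop:distance-evol} (as you do), but then cuts off the velocity, $\widetilde{v}=v\chi$, extends everything by zero to $\R^2$, and invokes the DiPerna--Lions representation theorem \cite[Theorem III.2]{Dip} for transport equations with bounded-divergence fields on the whole plane; this yields $\omega(t,y)=\omega^0(\widetilde{X}(0,t,y))$ for the DiPerna--Lions flow $\widetilde{X}$ of $\widetilde{v}$, and the proof concludes with a case analysis identifying $\widetilde{X}$ with the classical flow $X$ on the relevant compact sets. You instead construct the candidate solution $\tilde\omega=\omega^0\circ X(0,t,\cdot)$ directly from the classical flow, prove that $X$ preserves the measure $b\,\dd x$ by mollifying the stream functions in \eqref{BS:is} (so that the approximations are exactly $b$-divergence-free), combining the Liouville identity with Osgood stability of log-Lipschitz flows, and then verify the weak formulation of \eqref{linear-cont} by a change of variables; uniqueness from Proposition~\ref{prop:transport-appendix} finishes the argument. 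What the paper's route buys is that the flow representation comes for free from DiPerna--Lions theory, at the price of the cutoff construction and the flow-identification step; what your route buys is self-containedness (only the already-proven uniqueness of Proposition~\ref{prop:transport-appendix} is used, with no whole-plane extension or a.e.-flow machinery) plus the measure-preservation of the classical flow as a byproduct of independent interest, at the price of the approximation details --- uniform log-Lipschitz moduli under mollification, confinement of the approximate trajectories in the neighbourhood where $v_\eta$ is defined, and the Osgood stability estimate --- all standard but necessary to check. Both proofs rest on the same confinement lemmas to stay away from the degenerate boundary, and both ultimately reduce to a uniqueness statement for the linear problem.
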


\begin{proof}
Let $\Omega_0 \Subset\Omega$ such that $\supp (\omega^0)\subset \Omega_0$.
By Proposition~\ref{prop:support-appendix} there exists $\Omega_1\Subset \Omega$, with $\Omega_0\subset \Omega_1$, such that $\supp \omega(t,\cdot)\subset \Omega_1$ for all $t\in [0,T]$. By Proposition~\ref{prop:distance-evol}, increasing $\Omega_1$ if necessary, we may assume that
\begin{equation}X(t,0,x)\in \Omega_1,\quad \forall x\in \Omega_0, \quad \forall t\in [0,T]. \label{eq:inclusion}\end{equation}

Moreover, by Proposition~\ref{prop:distance-evol} we may introduce $\Omega_1 \subset \Omega_2 \Subset \Omega$ such that
\begin{equation}X(t,t_0,x)\in \Omega_2,\quad \forall x\in \Omega_1, \quad \forall t,t_0\in [0,T]. \label{eq:inclusion-2}\end{equation}

We finally introduce $\Omega_2 \Subset \Omega_3 \Subset \Omega$, 
we extend all functions by $0$ outside $\Omega$ and we set 
$\widetilde{v}=v\chi$ where the smooth function $\chi$ satisfies $\chi =1$ on $\Omega_2$ and $\chi=0$ in $\Omega_3^c$.

By definition of $\Omega_1$ we infer that $\omega$ is a weak bounded solution of
$$\partial_t \omega + \widetilde{v}\cdot \nabla \omega=0 \quad \text{on }[0,T]\times \R^2$$
where $\diver(\widetilde{v})=\diver(v)\chi+v\cdot \nabla \chi \in L^\infty([0,T]\times \R^2)$ by \eqref{ineq:bounded-div} and the definition of $\chi$, and where $\widetilde{v}$ satisfies the same regularity as $v$. In particular, we have
$\widetilde{v}\in L^1([0,T];W^{1,1}(\R^2))$ and $\widetilde{v}/(1+|x|)\in L^1([0,T];L^1(\R^2))+L^1([0,T];L^\infty(\R^2))$.
We may then invoke classical results on linear transport equations on the full space with vector fields of bounded divergence that where established by {\sc R. J. Di Perna} and {\sc P. L. Lions} \cite[Theorem III.2]{Dip}: we have
\begin{equation*}
\omega(t,y)=\omega^0(\widetilde{X}(0,t,y)),\quad \forall x\in \R^2, \quad \forall t\in [0,T]
\end{equation*}
where $\widetilde{X}$ is the flow associated to $\widetilde{v}$. In particular this holds true for $y\in \Omega$. Now we fix $t\in [0,T]$ and $y\in \Omega$. There are two cases.

\medskip

\textbullet \: If $y\in \Omega\setminus \Omega_1$, we have $\omega(t,y)=0$ by definition of $\Omega_1$. Let $x=X(0,t,y)$. If $x\in \text{supp}(\omega^0)$ then $x\in \Omega_0$ thus by \eqref{eq:inclusion}, we have $y=X(t,0,x)\in \Omega_1$ which is a contradiction. Therefore $x\notin \text{supp}(\omega^0)$ and $ \omega(t,y)=0=\omega^0(x)=\omega^0(X(0,t,y))$.

\medskip

\textbullet \: If $y\in \Omega_1$, we have $y=\widetilde{X}(t,0,\widetilde{x})$ (where $\widetilde{x}=\widetilde{X}(0,t,y)$) so that $\omega(t,y)=\omega^0(\tilde{x})$. Let $f(s)=\widetilde{X}(s,t,y)$ and $g(s)={X}(s,t,y)$ so that $f(t)=g(t)=y\in \Omega_1$. By \eqref{eq:inclusion-2} we have $g(s)\in \Omega_2$ for all $s\in [0,T]$. On the other hand, by continuity, there exists $(t_1,t_2)\in [0,T]$ containing $t$, maximal such that $f(s)\in \Omega_2$ on $(t_1,t_2)\in [0,T]$. Thus on $(t_1,t_2)$ we have $\tilde{v}(s,f(s))=v(s,g(s))$ therefore $f$ and $g$ coincide on $(t_1,t_2)$. Since $g\in \Omega_2$ on $[0,T]$ we conclude that $[t_1,t_2]=[0,T]$ and finally $f=g$ on $[0,T]$. In particular, $f(0)=g(0)$ which means that $\widetilde{x}=X(0,t,y)$. Thus, we have obtained 
$\omega(t,y)=\omega^0(X(0,t,y))$. Proposition~\ref{prop:last} is proved.
\end{proof}

\section{Rearrangement of the mass}\label{app:rearrangement}

 For $M_{0},\gamma>0$, we define
\[
F_{M_{0},\gamma} := \Big\{ f\in L^\infty_{c}(\R^2),\, 0\leq f \leq M_{0}, \, \int f=\gamma \Big\}.
\]
The following rearrangement result is commonly used in the literature, but we provide a proof for sake of completeness.
\begin{lemma}\label{lem:rearrang0}
 Let $g$ be a non increasing continuous function from $(0,\infty)$, non-negative, such that $s\mapsto s g(s)\in L^1_{\loc}([0,\infty))$. Then for all $x\in \R^2$, we have
 \[
 \sup_{f\in F_{M_{0},\gamma}} \int_{\R^2} g(|x-y|) f(y) \, \dd y = 2\pi M_{0} \int_0^{R_{0}} sg(s) \,\dd s \text{ with }R_{0}=\sqrt{\frac{\gamma}{\pi M_{0}}},
 \]
 i.e. the supremum is reached for the function $f^\ast=M_{0}\mathds{1}_{B(x,R_{0})}$.
\end{lemma}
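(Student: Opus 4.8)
The plan is to recognize this as an instance of the classical \emph{bathtub principle}: to maximize the linear functional $f\mapsto \int_{\R^2} g(|x-y|)f(y)\,\dd y$ under the pointwise bound $0\le f\le M_0$ and the mass constraint $\int f=\gamma$, one should place the allowed mass $M_0$ exactly where the weight $g(|x-y|)$ is largest. Since $g$ is non-increasing, this weight decreases with $|x-y|$, so the optimal strategy is to saturate $f=M_0$ on the ball centered at $x$ whose area absorbs the total mass. By translation invariance of Lebesgue measure, I would first reduce to $x=0$, writing $R_0=\sqrt{\gamma/(\pi M_0)}>0$ and $f^\ast=M_0\mathds{1}_{B(0,R_0)}$; a one-line check shows $f^\ast\in F_{M_0,\gamma}$ because the area of $B(0,R_0)$ is $\pi R_0^2=\gamma/M_0$, whence $\int f^\ast=\gamma$.

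The heart of the argument is a pointwise comparison valid in both regions of space. For any competitor $f\in F_{M_0,\gamma}$ I would split $\R^2$ into $B(0,R_0)$ and its complement. On $B(0,R_0)$ one has $f^\ast-f=M_0-f\ge 0$ together with $g(|y|)\ge g(R_0)$ (monotonicity), while on $B(0,R_0)^c$ one has $f^\ast-f=-f\le 0$ together with $g(|y|)\le g(R_0)$. In each case multiplying through yields the single inequality
\begin{equation*}
 g(|y|)\bigl(f^\ast(y)-f(y)\bigr)\ge g(R_0)\bigl(f^\ast(y)-f(y)\bigr)\qquad\text{for a.e. }y\in\R^2,
\end{equation*}
the set $\{|y|=R_0\}$ being negligible. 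Integrating and using that both $f$ and $f^\ast$ have the same mass $\gamma$ gives
\begin{equation*}
 \int_{\R^2} g(|y|)\bigl(f^\ast-f\bigr)\,\dd y\ \ge\ g(R_0)\int_{\R^2}\bigl(f^\ast-f\bigr)\,\dd y\ =\ g(R_0)(\gamma-\gamma)\ =\ 0,
\end{equation*}
so $f^\ast$ indeed realizes the supremum. The value is then computed in polar coordinates: $\int g(|y|)f^\ast(y)\,\dd y=M_0\int_{B(0,R_0)}g(|y|)\,\dd y=2\pi M_0\int_0^{R_0}s\,g(s)\,\dd s$.

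I expect no serious obstacle here; the only points requiring care are bookkeeping rather than ideas. First, I would record that every integral involved is finite and well defined: since $f$ is bounded by $M_0$ and compactly supported, say in $B(0,\rho)$, the hypothesis $s\mapsto sg(s)\in L^1_{\loc}([0,\infty))$ gives $\int g(|y|)f(y)\,\dd y\le 2\pi M_0\int_0^{\rho}s\,g(s)\,\dd s<\infty$, which also accommodates a possible singularity of $g$ at the origin (as for $g(s)=-\ln(s/\operatorname{diam}\Omega)\mathds{1}_{s\le\operatorname{diam}\Omega}$ used in Lemma~\ref{lem:est-psi}). Second, I would note that the continuity of $g$ on $(0,\infty)$ makes $g(R_0)$ meaningful (using $R_0>0$), and that only the non-strict monotonicity of $g$ is needed, so no non-degeneracy assumption on $g$ is required. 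The argument is robust to these degeneracies precisely because the comparison inequality uses solely the sign of $f^\ast-f$ against the threshold value $g(R_0)$.
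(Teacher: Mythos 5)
Your proof is correct and follows essentially the same route as the paper's: both rest on comparing $g(|x-y|)$ with the threshold value $g(R_0)$ and exploiting the sign of $f^\ast-f$ inside and outside $B(x,R_0)$. The only difference is presentational — you package the comparison as a single pointwise inequality and integrate, while the paper splits the integral into three pieces and estimates them region by region — so the two arguments coincide in substance.
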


\begin{proof} 
Let $f\in F_{M_{0},\gamma}$ given.
We have
\begin{align*}
\int_{\R^2} &g(|x-y|) f(y) \, \dd y=\int_{B(x,R_0)} g(|x-y|) f(y) \, \dd y+\int_{\R^2\setminus B(x,R_0)} g(|x-y|) f(y) \, \dd y\\
=&\int_{B(x,R_0)} g(|x-y|) (f(y)-f^\ast(y)) \, \dd y+\int_{\R^2\setminus B(x,R_0)} g(|x-y|) f(y) \, \dd y \\
&+\int_{B(x,R_0)} g(|x-y|) f^\ast(y) \, \dd y.
\end{align*}
For $y\in B(x,R_0)$ we have $f(y)\leq f^\ast(y)$ and $g(|x-y|)\geq g(R_0)$, therefore
\begin{align*}
\int_{B(x,R_0)} g(|x-y|) (f(y)-f^\ast(y)) \, \dd y &\leq g(R_0)\int_{B(x,R_0)}(f(y)-f^\ast(y)) \, \dd y
\\
&\leq g(R_0)\int_{B(x,R_0)}f(y)\, \dd y -g(R_0)\gamma.
\end{align*}

Next, for $y\in \R^2\setminus B(x,R_0)$ we have $g(|x-y|)\leq g(R_0)$, therefore
\begin{equation*}
\int_{\R^2\setminus B(x,R_0)} g(|x-y|) f(y) \, \dd y\leq g(R_0)\int_{\R^2\setminus B(x,R_0)} f(y) \, \dd y.
\end{equation*}

Altogether, we obtain
\begin{align*}
\int_{\R^2} g(|x-y|) f(y) \, \dd y \leq& g(R_0)\int_{B(x,R_0)}f(y)\, \dd y -g(R_0)\gamma\\
&+g(R_0)\int_{\R^2\setminus B(x,R_0)} f(y) \, \dd y
+\int_{B(x,R_0)} g(|x-y|) f^\ast(y) \, \dd y\\
\leq& g(R_0)\int_{\R^2}f(y)\, \dd y -g(R_0)\gamma +\int_{B(x,R_0)} g(|x-y|) f^\ast(y) \, \dd y\\
\leq & \int_{\R^2} g(|x-y|) f^\ast(y) \, \dd y.
\end{align*}
The conclusion follows.
\end{proof}



\end{document}